\documentclass[11pt,reqno]{amsart}
\usepackage[utf8]{inputenc}
\usepackage{amssymb}
\usepackage{amsmath}
\usepackage{amsfonts}
\usepackage{amsthm,latexsym,color}
\pdfoutput=1

\setlength{\oddsidemargin}{0.01in}
\setlength{\evensidemargin}{0.01in} \setlength{\textwidth}{6.2in}
\setlength{\topmargin}{-0.0in} \setlength{\textheight}{8.5in}
\usepackage{pifont}
\usepackage{verbatim}
\usepackage{amsfonts}

\font\TenEns=msbm10 \font\SevenEns=msbm7 \font\FiveEns=msbm5
\newfam\Ensfam

\textfont\Ensfam=\TenEns \scriptfont\Ensfam=\SevenEns
\scriptscriptfont\Ensfam=\FiveEns

\def\R {\mathbb R}
\def\C {\mathbb C}

\setcounter{MaxMatrixCols}{10}
\newtheorem{theorem}{Theorem}
\theoremstyle{plain}

\newtheorem{corollary}{Corollary}[section]

\newtheorem{definition}{Definition}

\newtheorem{lemma}{Lemma}[section]

\newtheorem{proposition}{Proposition}[section]
\newtheorem{remark}{Remark}[section]

\numberwithin{equation}{section}
\numberwithin{equation}{section}
\title[Inhomogeneous Nonlinear Schr\"odinger Equation]{Global existence and Scattering for the Inhomogeneous Nonlinear Schr\"odinger Equation}
\author[L. Aloui]{Lassaad Aloui}
\address{Lassaad Aloui, Universit\'e de Tunis El Manar, Facult\'e des Sciences de Tunis, D\'epartement de
Math\'ematiques, 2092 Tunis,
Tunisie.}
\address{and University of Sousse,  Laboratoire LAMMDA, Sousse, Tunisie.}
\email{lassaad.aloui@fsg.rnu.tn}
\author[S. Tayachi]{Slim Tayachi}
\address{Slim Tayachi,  Universit\'e de Tunis El Manar, Facult\'e des Sciences de Tunis, D\'epartement de
Math\'ematiques, Laboratoire  \'Equations aux D\'eriv\'ees
Partielles LR03ES04,  2092 Tunis,
Tunisie.}
\email{slim.tayachi@fst.rnu.tn}
\date{\today }
\subjclass[2010]{Primary: 35G20, 35G25, 35Q55. Secondary: 35Q70, 35Q35 }
\keywords{Inhomogeneous  Nonlinear Schr\"odinger  Equation, Strichartz estimates, Global existence, Asymptotic behavior, Scattering theory.}
\begin{document}
\begin{abstract}
In this paper we consider  the inhomogeneous  nonlinear Schr\"odinger  equation $i\partial_t u +\Delta u  =K(x)|u|^\alpha u,\; u(0)=u_0\in H^s(\R^N),\; s=0,\,1, \; N\geq 1,\; |K(x)|+|x|^s|\nabla^s K(x)|\lesssim |x|^{-b} ,\; 0<b< \min (2, N-2s),\; 0<\alpha<{(4-2b)/(N-2s)}$.  We obtain novel results of global existence for oscillating initial data and scattering theory in a weighted $L^2$-space  for a new range $\alpha_0(b)<\alpha<(4-2b)/N$. The value $\alpha_0(b)$ is the positive root of $N\alpha^2+(N-2+2b)\alpha-4+2b=0,$ which extends the  Strauss exponent known for $b=0$. Our results improve the known ones for $K(x)=\mu|x|^{-b}$, $\mu\in \mathbb{C}$ and apply for more general potentials. In particular, we show the impact of the behavior of the potential at the origin and infinity on the allowed range of $\alpha$. Some decay estimates are also established for the defocusing case. To prove the scattering results, we give a new criterion taking into account the potential $K$. \end{abstract}

\maketitle
\section{Introduction}
In this paper, which is a continuation of our previous article \cite{AT-1},  we investigate the global existence and the asymptotic behavior for the inhomogeneous nonlinear Schr\"odinger equation

\begin{equation}\label{INLS}
  i\partial_t u +\Delta u  =K(x)|u|^\alpha u,
     \end{equation}
     with initial data
     \begin{equation}  \label{InitialINLS}
                 u(0,.)=u_0\in H^s(\R^N).
\end{equation}
Here $u=u(t,x)\in\C,\; t\in  \R,\; x\in \R^N$, $N\geq 1$, $s= 0$ or $s=1$ and $\alpha>0$.
The potential $K$ is a complex valued function satisfying some hypothesis. In particular, $$K(x)=\mu |x|^{-b}$$ and $$K(x)=\mu (1+|x|^{2})^{-\frac{b}{2}},$$
$b>0$, $\mu\in \mathbb{C}$, will be considered.

Equation  \eqref{INLS} with a constant function $K$, corresponds to the  standard nonlinear Schr\"odinger equation.  The case where $K$ is non constant and bounded is considered in  \cite{M,RS}. The unbounded  potential case  is also treated in \cite{Ch,ChG,Z}, where $K(x)=|x|^b$. Here we consider     \eqref{INLS} with potential having a decay like $|x|^{-b}$  at infinity and may be singular at the origin. This kind of equations appears in diverse branches of physics such as  nonlinear optics. See for example \cite[Section 6]{Genoud}. The local theory for \eqref{INLS} has been established in \cite{AT-1,Dinh, FarahGuzman, GS, Guzman, KimLeeSeo, LeeSeo} under the conditions $(K_1)-(K_2)$ below. We mention also that the study of the standing waves for \eqref{INLS} is done in \cite{DeBF,FO,LWW}. Here, we are mainly interested in global existence and small energy scattering for oscillating data. Also we  study the  decay estimates and the complete scattering  for the defocusing case.

For the standard nonlinear Schr\"odinger equation, there are vast amount results on the asymptotic behavior. See, among many, \cite{CW3, Cazenave,GinibreVelo,Tsutsumi} and the references therein. If $K,\nabla K\in L^\infty$, a results of global existence and scattering have been obtained in \cite{CW3} for oscillating initial data. For the case $K(x)=\mu |x|^{-b}$ with $b>0$, $\mu\in \mathbb{R}$ and $\frac{4-2b}{N}<\alpha<\frac{4-2b}{N-2}$ similar results are established in \cite{Guzman}. Our aim here is to improve this result in terms of the allowed values of $\alpha$ and on the smallness of initial data similar to the case $b=0$ in \cite{CW3}. To do this we exploit the scaling of the equation and the blow up criterion given in \cite{AT-1} and we establish  the Strichartz  estimates for non-admissible pairs to handle the potential $K$.

Our method allows us to consider potentials having different behavior near the origin and at infinity. In particular, we show that  more the potential decreases wider is the range of allowed  $\alpha$ giving scattering. For instance, if $K$ is regular and its decay at infinity exceeds $|x|^{-2}$, then the scattering for oscillating data occurs for all $0<\alpha<4/(N-2)$.

We are also interested in the scattering without smallness conditions on initial data, then we consider the defocusing case that is $K\geq 0$. We note that in this setting some scattering results are obtained in the weighted $L^2$ space for $\frac{4-2b}{N}<\alpha<\frac{4-2b}{N-2}$. See  \cite{Dinh2}. The proof of \cite{Dinh2} is based on some decay estimates. It seems that this type of estimates do not allow values of $\alpha$ less than $ (4-2b)/N$. Our second aim is to go down this last value of $\alpha$. To do this we establish a scattering criterion which is expressed in terms of rapidly decay of the solution in weighted Lorentz space taking into account the potential $K$. We also refined the decay estimate of \cite{Dinh2}.

Our scattering results concern the weighted $L^2$ space and they are valid for $\alpha>\alpha_0(b)$,  where  $\alpha_0(b)$ is defined by \eqref{alpha0} below which coincides with the known one  for $b=0$. As well as, our proof unifies the cases $b=0$ and $b>0$.  We mention that recently, an $L^2-$ scattering result is obtained in \cite{AIMMU}.

At the end of this paper, we are interested in the energy scattering for \eqref{INLS} with $\alpha$ in the inter-critical range. In the focusing setting, some results for this type of problem are  established in \cite{DK,Cam,Cam-Car,CFGM,FarahGuzman,FarahGuzman2} for $K(x)=|x|^{-b}$ and in \cite{CL,CHL,MMZ} and references therein for more general potentials. Here we give a scattering criterion in $H^1$ regardless the sign of $K$ (see Proposition \ref{refineddecayandscatteringH1} below) which allows us to give an alternative proof of some known results in \cite{Dinh2}.

In order to state our results, we need the following.
\begin{definition}[Admissible pair]
\label{defpairadmisible}
We say that  $(r,p)$ is an admissible pair if it satisfies
\begin{eqnarray}
\label{defAdmiss2}
\frac{2}{r}+\frac{N}{p}= \frac{N}{2}
\end{eqnarray}
and
$$2\leq p\leq \frac{2N}{N-2} \;\; (2\leq p<\infty \; \text{ if } N=2,\;  2\leq p\leq \infty  \text{ if } N=1).$$
\end{definition}
\noindent In this paper only the admissible pairs $(r,p)$ with $p<\infty$ will be considered.

We study  the problem \eqref{INLS}-\eqref{InitialINLS} in its integral version
\begin{equation}
\label{intINLS}
u(t)=e^{it\Delta}u_0-i\int_0^te^{i(t-s)\Delta}(K|u|^\alpha u(s))ds,
\end{equation}
where $e^{it\Delta}$ is the free Schr\"odinger group.
We consider the following conditions on $K$.
$$ (K_1)\quad |K(x)|\lesssim |x|^{-b},$$
$$(K_2)\quad|\nabla K(x)|\lesssim |x|^{-b-1},$$
 for $ x\in \R^{N}\setminus \{0\}$, where $0<b<\min (2,N)$.

We now suppose that
\begin{equation}
\label{conditionsNBalpha}
  0<\alpha<{4-2b\over (N-2)_+},
\end{equation}
where $r_+=\max (r,0)$, for a real number $r$.
Let $\alpha_0(b)$ be the positive root of the equation
\begin{equation}
\label{equationalpha0}
N\alpha^2+(N-2+2b)\alpha-4+2b=0.
\end{equation}
That is
\begin{equation}
\label{alpha0}
\alpha_0(b)={-(N-2+2b)+\sqrt{(N-2+2b)^2+4N(4-2b)}\over 2N}.
\end{equation}
Since $b<2$, then $\alpha_0(b)$ is well defined and verifies
    $${4-2b\over N+2}<\alpha_0(b)<{4-2b\over N}.$$
 See \cite{Tsutsumi, St}  for the case $b=0$. By natural extension we have $\alpha_0(2)=0.$

For $\alpha$ satisfying \eqref{conditionsNBalpha}, we introduce the following positive real numbers
\begin{equation}
\label{aP}
\varrho={N(\alpha+2)\over N-b},\; a={2\alpha(\alpha+2)\over 4-2b-\alpha(N-2)}.
\end{equation}
Clearly, $2<\alpha+2<\varrho<{2N\over (N-2)_+}$ and $1 < a<\infty$ for $\alpha_0(b)<\alpha$.

We have obtained the following for the global existence.
\begin{theorem}[Global existence for oscillating data]
\label{global}

Let $N\geq 1$, $u_0\in H^s(\R^N)$, $s=0,1$ and $K$ be a complex valued function   satisfying the condition $(K_1)$.  Let $1\leq q \leq \infty$, $\alpha_0(b)$, $a$ and $\varrho$ be defined respectively by \eqref{alpha0} and \eqref{aP}. Assume further that\newline
$$s=0\mbox{ and } \alpha_0(b)<\alpha\leq {4-2b\over N},$$
or $$s=1, \; N\geq 4,\; K \mbox{ satisfies } (K_2) \mbox{ and }\alpha_0(b)<\alpha < {4-2b\over N-2}.$$
Then there exists $\varepsilon>0$ such that if
\begin{equation}
\label{globalcondi}
\|e^{it\Delta}u_0\|_{L^a(0,\infty; L^{\varrho,q}(\R^N))}\leq \varepsilon,
 \end{equation} the maximal solution $u$ of \eqref{INLS} with initial value $u_0$  is positively global.  Moreover, $u\in L^a\left(0,\infty; L^{\varrho,q}(\R^N)\right)\cap L^r\left(0,\infty; W_2^{s,p}(\R^N)\right),$ for every admissible pair $(r,p)$ and there exists a constant $C>0$ such that the following estimates hold
 \begin{equation}
 \label{estimatinglobalerapid}
 \|u\|_{ L^{a}(0,\infty;L^{\varrho,q}(\R^N))}\leq 2\|e^{it\Delta}u_0\|_{L^a(0,\infty; L^{\varrho,q}(\R^N))},
 \end{equation}
\begin{equation}
 \label{estimatinglobalestrit}
 \|u\|_{ L^{r}(0,\infty;W_2^{s,p}(\R^N))}\leq  C\|u_0\|_{H^s(\R^N)}.
 \end{equation}
\end{theorem}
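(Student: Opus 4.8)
The plan is to set up a fixed-point argument for the integral equation \eqref{intINLS} in a complete metric space built from the norms appearing in the statement. First I would, for a suitable time interval and a radius to be chosen, define
$$
X=\Bigl\{u:\ \|u\|_{L^a(0,\infty;L^{\varrho,q})}\leq 2\varepsilon,\ \ \|u\|_{L^r(0,\infty;W_2^{s,p})}\leq M\|u_0\|_{H^s}\text{ for all admissible }(r,p)\Bigr\},
$$
equipped with the distance $d(u,v)=\|u-v\|_{L^a(0,\infty;L^{\varrho,q})}$ (plus, if $s=1$, a weaker Strichartz distance to close the contraction, a standard device since $|u|^\alpha u$ is only H\"older-$C^1$ when $\alpha<1$). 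The map is $\Phi(u)(t)=e^{it\Delta}u_0-i\int_0^t e^{i(t-s)\Delta}(K|u|^\alpha u)(s)\,ds$. The point of the exponents \eqref{aP} is precisely that the nonlinearity $K|u|^\alpha u$, with $|K|\lesssim|x|^{-b}$, lands after an application of H\"older (in space, using the weight $|x|^{-b}$ against the Lorentz scale so that $L^{\varrho,q}$ is matched) and of the extended (non-admissible) Strichartz estimates on the Duhamel term in exactly the spaces $L^a_t L^{\varrho,q}_x$ and $L^r_t W^{s,p}_x$; the algebraic identity $N\alpha^2+(N-2+2b)\alpha-4+2b=0$ defining $\alpha_0(b)$ is the borderline case where the relevant pair becomes admissible, and $\alpha>\alpha_0(b)$ is what makes $1<a<\infty$ so the time integration is genuinely subcritical.

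The key steps, in order, are: (1) record the inhomogeneous Strichartz estimate for non-admissible pairs — the version allowing the pair $(a,\varrho)$ on the left and a dual-type pair determined by $(\alpha,b)$ on the right — which is the technical input flagged in the introduction; (2) estimate $\|e^{i(t-s)\Delta}\|$ through that inequality to get
$$
\|\Phi(u)\|_{L^a_tL^{\varrho,q}_x}\leq \|e^{it\Delta}u_0\|_{L^a_tL^{\varrho,q}_x}+C\|\,|x|^{-b}|u|^{\alpha}u\,\|_{\text{(dual space)}}\leq \varepsilon+C\|u\|_{L^a_tL^{\varrho,q}_x}^{\alpha+1},
$$
via H\"older in $x$ splitting $|x|^{-b}$ on the Lorentz scale and H\"older in $t$ with the exponent $a$; (3) do the same in every admissible $L^r_tW^{s,p}_x$ norm, where one uses \eqref{globalcondi} to absorb one factor of $\|u\|_{L^a_tL^{\varrho,q}_x}\leq2\varepsilon$ and leaves $\|u\|_{L^r_tW^{s,p}_x}$ linearly, getting $\|\Phi(u)\|_{L^r_tW^{s,p}_x}\leq C\|u_0\|_{H^s}+C\varepsilon^{\alpha}\|u\|_{L^r_tW^{s,p}_x}$; (4) choose $\varepsilon$ small so that $C\varepsilon^{\alpha}<1/2$ and $C(2\varepsilon)^{\alpha}<1/2$, making $\Phi$ a self-map of $X$ and a contraction for $d$; (5) conclude existence of a unique fixed point $u\in X$, whence \eqref{estimatinglobalerapid}–\eqref{estimatinglobalestrit}; (6) invoke the blow-up alternative from \cite{AT-1}: since $u$ lies in $L^r_t W^{s,p}_x$ on $(0,\infty)$ for admissible pairs, the $H^s$ norm cannot blow up in finite forward time, so $u$ is positively global, and the constructed global solution coincides with the maximal one by uniqueness in the local theory. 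For $s=1$ one differentiates the equation, uses $(K_2)$ to control $\nabla(K|u|^\alpha u)=(\nabla K)|u|^\alpha u+K\nabla(|u|^\alpha u)$ — the first term costs $|x|^{-b-1}$ but the extra $|x|^{-1}$ is paid for exactly because $b+1<N-1$ when $N\geq4$, which is why the hypothesis $N\geq4$ appears — and runs the same Strichartz/H\"older bookkeeping at the level of $W^{1,p}$.

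The main obstacle I expect is step (1)–(2): making the bilinear/nonlinear estimate
$$
\Bigl\|\int_0^t e^{i(t-s)\Delta}\bigl(|x|^{-b}|u|^\alpha u\bigr)(s)\,ds\Bigr\|_{L^a_tL^{\varrho,q}_x}\lesssim \|u\|_{L^a_tL^{\varrho,q}_x}^{\alpha+1}
$$
work with \emph{non-admissible} time exponents. One cannot use the classical Strichartz inequality directly; the clean route is the Keel–Tao / Foschi endpoint machinery or the fractional-integration (Hardy–Littlewood–Sobolev in $t$) form of the inhomogeneous estimate, combined with the real-interpolation (Lorentz) refinement needed to handle $|x|^{-b}\in L^{N/b,\infty}$ sharply — plain $L^p$ spaces would lose the endpoint and shrink the range of $\alpha$. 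Getting the H\"older exponents to close means verifying that $\varrho,a$ from \eqref{aP} satisfy the scaling and admissibility-gap relations, which is a computation but a delicate one; once that identity is in hand, the contraction and the globalization via the \cite{AT-1} criterion are routine. A secondary subtlety is the low-regularity contraction when $\alpha<1$ (the map is only continuous, not Lipschitz, in the strong norm), handled by contracting in a weaker Strichartz norm while keeping the strong bounds as a priori estimates — the usual Kato argument.
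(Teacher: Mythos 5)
Your proposal is correct in substance and rests on the same two analytic pillars as the paper: the inhomogeneous Strichartz estimate for the non-admissible pair $(a,\varrho)$ in Lorentz spaces (Proposition \ref{nonadmissiblestrichartz}, proved via the dispersive estimate and Young/H\"older in Lorentz norms) and the H\"older estimates placing $K|u|^\alpha u$ (and, for $s=1$, $\nabla K\,|u|^\alpha u$ under $(K_2)$, which forces $\varrho<N$ and hence $N\geq 4$) into the dual spaces using $|x|^{-b}\in L^{N/b,\infty}$ (Lemmas \ref{nonlinearnonadmissibleNgeq3} and \ref{nonlinearnonadmissible}). Where you genuinely differ is in how globality is obtained: you construct the solution on all of $(0,\infty)$ by a Banach fixed point in a two-norm ball with the weak $L^a_tL^{\varrho,q}_x$ metric (Kato's device for $\alpha<1$) and then identify it with the maximal solution by uniqueness, whereas the paper never re-constructs the solution: it proves the a priori bounds \eqref{similar2.6}--\eqref{similar2.7} directly on the maximal solution of \cite{AT-1} on $(0,T)$, $T<T_{\max}$, closes \eqref{similar2.6} by a continuity (``Gronwall'') argument with $2^{\alpha+1}C\varepsilon^{\alpha}<1$, specializes \eqref{similar2.7} to $(r,p)=(\varsigma,\varrho)$ to close the Strichartz bound, and invokes the blow-up alternative to get $T_{\max}=\infty$. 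The paper's route spares you the completeness-of-the-metric-space and non-Lipschitz-nonlinearity issues you must patch, at the cost of leaning on the local theory and its blow-up criterion (which you use anyway); your route is self-contained as a construction but needs the uniqueness-class identification made explicit. Two small corrections: the precise role of $\alpha>\alpha_0(b)$ is not to make $a$ finite (that is mere subcriticality) but is exactly the equivalence $a>\alpha+1\iff a>\varsigma/2$ of Lemma \ref{nonlinearnonadmissibleNgeq3}\,(i), which is what allows Proposition \ref{nonadmissiblestrichartz} to be applied with $\sigma=a$ and makes the H\"older bookkeeping $\tilde a'(\alpha+1)=a$ consistent; and in your step (3) the nonlinear estimate returns the fixed norm $\|u\|_{L^{\varsigma}_tW^{s,\varrho}_x}$ on the right for every admissible pair on the left, so one must close first with $(r,p)=(\varsigma,\varrho)$ and only afterwards deduce the bound for the other admissible pairs, as the paper does.
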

Examples of initial data satisfying \eqref{globalcondi} are given in Corollaries \ref{exempleinitialdata} and \ref{occilating} below.  A class of initial data giving rise to blowing up solutions only for negative time is given in  Corollary \ref{occilating}.

\begin{remark}$ ${\rm
\begin{itemize}
\item[1)]  For the well-posedness of the equation \eqref{INLS} with  regularity in Lorentz spaces, see  Theorems 1.2, 1.3 and Remark 1.1 in \cite{AT-1}.

\item[2)] The typical example of potential satisfying the hypothesis of the previous theorem  is $K(x)=\mu |x|^{-b}$, $\mu\in \mathbb{C}$.

\item[3)] Our proof is valid for $b=0$. In particular the case $s=0$ is new. On the other hand, the previous result extends the case $b=0$, $s=1$ and $q=\varrho$ considered in \cite[Proposition 2.4, p. 82]{CW3}.
\item[4)] The conditions $ \alpha >\alpha_0(b)$ and $\varrho <N$ needed for the Sobolev embedding require $N\geq 4$ for $s=1$. The case $N=3$  will be done in the forthcoming work \cite{AT-3}.
\item[5)] If we replace $(0,\infty)$ in \eqref{globalcondi} by an interval $I$, the conclusion of the previous result holds on $I$ instead of $(0,\infty)$. In particular if $I=\R$ we get global existence of the solution for negative and positive time.
\item[6)] The previous result is new for $L^2$-subcritical $\alpha$ with smallness condition for one non-admissible pair. More precisely, let ${\mathcal{A}_{s_c}}$ be the set of $\dot{H}^{s_c}(\R^N)-$admissible pair, defined by
    $${\mathcal{A}_{s_c}}=\{(r,p),\; {2\over r}={N\over 2}-{N\over p}-s_c\}$$  where $s_c={N\over 2}-{2-b\over \alpha}.$ A smallness condition on $\sup_{(r,p)\in {\mathcal{A}_{s_c}}}\|e^{it\Delta}u_0\|_{L^r(\R; L^{p})},$ is imposed in \cite[Corollary 1.12, p. 253]{Guzman} to obtain global existence for solutions of \eqref{INLS}. See also \cite[Proposition 4.5, p. 4189]{FarahGuzman} for $N=3.$ Here our condition combined with the above remark, is less restrictive.  In fat, we consider only one $\dot{H}^{s_c}(\R^N)-$admissible pair which is $(a,\varrho).$   Note that also, unlike \cite{Guzman} where $\alpha>{4-2b\over N}$, here we reach the values $\alpha > \alpha_0(b).$
\item[7)] Using similar argument as in \cite[Theorem 6.2.1, p. 165]{Cazenave}, one can prove that if $${4-2b\over N}\leq\alpha<{4-2b\over N-2}$$ and $u_0\in H^1(\R^N)$ sufficiently small, then the corresponding solution $u$ of   \eqref{INLS} is positively and negatively global. Moreover,  $u\in L^r\left(\R; W_2^{1,p}(\R^N)\right)$, for every admissible pair $(r,p).$ In fact for this result we need $$0<{1\over a}\leq {1\over \varsigma},$$ where $\varsigma$ is such that $(\varsigma, \varrho)$ is an admissible pair, that is
    \begin{equation}
    \label{varsigma}
    \varsigma={4(\alpha+2)\over N\alpha+2b}.
    \end{equation}
    The first inequality follows since $\alpha$ is subcritical. In order that the second inequality is verified we need  $N\alpha^2+2(N-2+b)\alpha-8+4b \geq 0,$ that is  $\alpha\geq {4-2b\over N}$.

For the above values of $\alpha$, the condition \eqref{globalcondi} is less restrictive than the smallness condition in $H^1(\R^N)$. This follows by the inequality
$$\|e^{it\Delta}u_0\|_{L^a(\mathbb{R}; L^{\varrho,q}(\R^N))}\lesssim \|u_0\|_{H^1(\R^N)}$$
which can be obtained by interpolation and the Sobolev-Lorentz embedding.
\item[8)] Recall that for the critical case $\alpha={4-2b\over N-2s},$ the global existence is established in \cite[Theorem 1 Part (v)]{AT-1},  for initial data $u_0\in H^s(\R^N),$ such that $\|(-\Delta)^{\frac{s}{2}} u_0\|_{L^2(\R^N)}$ is sufficiently small.

\item[9)] If $\alpha\geq {4-2b\over N}$, then the rapidly decay property $u\in L^a\left(0,\infty; L^{\varrho,2}(\R^N)\right)$ follows from the fact that $u\in L^r\left(0,\infty; W_2^{1,p}(\R^N)\right)$, for every admissible pair $(r,p)$. Indeed, by the Sobolev-Lorentz embedding theorem, $u\in L^\varsigma\left(0,\infty; L^{\varrho,2}(\R^N)\right)\cap L^\infty\left(0,\infty; L^{\varrho,2}(\R^N)\right).$  Since in this case $\varsigma\leq a$, then the result follows  by interpolation argument. While for $\alpha<{4-2b\over N}$ the property $u\in L^a\left(0,\infty; L^{\varrho,2}(\R^N)\right)$ gives a more precise decay.
\item[10)] The value of $\varrho$ allows the
map $f \to |x|^{-b}|f|^{\alpha}f$ to apply the Lorentz space $L^{\varrho,q}$ into  $L^{\varrho ',\frac{q}{\alpha+1}}$, $q\geq \alpha+1$. For this choice of $\varrho$, if $f\in H^1 $ so $f$ belongs to $ L^{\varrho,\alpha+2}$ and under the condition $(K_1)$, the energy
 \begin{equation}
\label{energy}
E(f)={1\over 2}\int_{\R^N}|\nabla f(x)|^2dx+{1\over \alpha+2}\int_{\R^N}K(x)|f(x)|^{\alpha+2}dx
\end{equation}
is well defined.
\item[11)] The value of $a$ is determined by scaling argument. In fact, if $u$ is a solution of \eqref{INLS} with $K(x)=|x|^{-b}$, then for all $\lambda>0,$ $u_\lambda$ is also a solution of  \eqref{INLS}, where $u_\lambda(t,x)=\lambda^{{2-b\over \alpha}}u(\lambda^2 t,\lambda x).$ The value of $a$ is determined so that $\|e^{it\Delta}u_\lambda(0)\|_{L^a(0,\infty; L^{\varrho,q})}$ is independent of $\lambda.$ In fact, let $D_\lambda$ be the dilation operator defined by $D_\lambda(f)(x)=f(\lambda x),\; \lambda>0.$ It is known that
    $$D_\lambda{\rm e}^{i\lambda^2 t\Delta} = {\rm e}^{it\Delta}D_\lambda.$$ See \cite[equality (3.2), p. 259]{CW2}. Hence we have
\begin{eqnarray*}
\|e^{it\Delta}u_\lambda(0)\|_{L^a(0,\infty; L^{\varrho,q})}&=&\|e^{it\Delta}\lambda^{{2-b\over \alpha}}D_\lambda u_0\|_{L^a(0,\infty; L^{\varrho,q})}\\ &=& \lambda^{{2-b\over \alpha}-{N\over \varrho}-{2\over a}}\| e^{i t\Delta} u_0\|_{L^a(0,\infty; L^{\varrho,q})}.
\end{eqnarray*}
The value of $a$ gives
${2-b\over \alpha}-{2\over a}-{N\over \varrho}=0.$

\end{itemize}}
\end{remark}

We now establish a scattering criterion in $L^2$. We have obtained the following.
\begin{theorem}[$(L^2,L^2)$-scattering criterion]
\label{globalstritchartzu}
Let $N\geq 1$, $u_0\in L^2(\R^N)$, $K$ be a complex valued function   satisfying the condition $(K_1)$ and $$ 0<\alpha\leq {4-2b\over N}.$$ Let $a$, $\varrho$ and $\varsigma$ be given by \eqref{aP} and \eqref{varsigma}.
Let $u\in C([0,\infty),L^2(\R^N))\cap L^\varsigma_{loc}(0,\infty; ,L^\varrho (\R^N))$ be a global solution of \eqref{INLS}. If $|K|^{{1\over \alpha+2}}u\in L^{ a}(0,\infty;L^{{\alpha+2},\infty}(\R^N))$ then  $u\in L^{r}(0,\infty;L^{p,2}(\R^N)),$ for any admissible pair $(r,p)$ and $u$ scatters in $L^2(\R^N)$, that is there exists $\varphi^+\in L^2(\R^N)$ such that
$$\lim_{t\to\infty}\|u(t)-e^{it\Delta}\varphi^+\|_{L^2(\R^N)}=0.$$
 Similar statements hold for negative time.
\end{theorem}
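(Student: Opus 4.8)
The plan is to run a bootstrap in the Lorentz--Strichartz spaces attached to the admissible pair $(\varsigma,\varrho)$, using the hypothesis on $v:=|K|^{\frac{1}{\alpha+2}}u$ to buy smallness on a tail interval $(T,\infty)$, and then to read off the asymptotic state as $\varphi^+=u_0-i\int_0^\infty e^{-is\Delta}\big(K|u|^\alpha u(s)\big)\,ds$. It is worth noting that this particular argument should require no Strichartz estimate for non-admissible pairs: although $(a,\varrho)$ is not admissible in general, $a$ will enter only as a H\"older exponent in time, the spatial part being carried on the admissible pair $(\varsigma,\varrho)$.

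The engine is a single nonlinear estimate. Starting from $(K_1)$, I would write pointwise
\[
|K(x)|\,|u|^{\alpha+1}\lesssim\big(|K(x)|^{\frac{1}{\alpha+2}}|u|\big)^{\alpha}\,|x|^{-\frac{2b}{\alpha+2}}\,|u|=|v|^{\alpha}\,|x|^{-\frac{2b}{\alpha+2}}\,|u|,
\]
and, using $|x|^{-\frac{2b}{\alpha+2}}\in L^{\frac{(\alpha+2)N}{2b},\infty}(\R^N)$ together with the generalized H\"older inequality in Lorentz spaces, deduce that for a.e.\ $t$
\[
\big\|K|u|^\alpha u(t)\big\|_{L^{\varrho',2}}\lesssim\|v(t)\|_{L^{\alpha+2,\infty}}^{\alpha}\,\|u(t)\|_{L^{\varrho,2}};
\]
the exponents balance because $\frac{1}{\varrho'}=\frac{\alpha}{\alpha+2}+\frac{2b}{(\alpha+2)N}+\frac{1}{\varrho}$, which is exactly the relation that pins down the value of $\varrho$ in \eqref{aP} (cf.\ (10) in the Remark). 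Integrating in $t$ with H\"older and using the scaling identity $\frac{1}{\varsigma'}=\frac{\alpha}{a}+\frac{1}{\varsigma}$ --- the relation encoded in the value of $a$ in \eqref{aP} (cf.\ (11)) --- I would obtain, on any interval $J\subset(0,\infty)$,
\[
\big\|K|u|^\alpha u\big\|_{L^{\varsigma'}(J;L^{\varrho',2})}\lesssim\|v\|_{L^{a}(J;L^{\alpha+2,\infty})}^{\alpha}\,\|u\|_{L^{\varsigma}(J;L^{\varrho,2})}.
\]
Since $0<\alpha<\frac{4-2b}{N-2}$ forces $2<\varrho<\frac{2N}{(N-2)_+}$, the pair $(\varsigma,\varrho)$ is admissible and non-endpoint, so the Lorentz forms of the homogeneous, inhomogeneous and dual Strichartz estimates are available; combining the inhomogeneous one with the estimate above gives, for every admissible pair $(r,p)$ and every interval $J$ whose left endpoint is $t_0$,
\[
\|u\|_{L^{r}(J;L^{p,2})}\lesssim\|u(t_0)\|_{L^2}+\|v\|_{L^{a}(J;L^{\alpha+2,\infty})}^{\alpha}\,\|u\|_{L^{\varsigma}(J;L^{\varrho,2})}.
\]

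With this in hand the bootstrap runs as usual. Because $\alpha\le\frac{4-2b}{N}$, the $L^2$ local well-posedness theory of \cite{AT-1}, which is set in Lorentz spaces (cf.\ (1) in the Remark), applies to the given solution and shows $u\in L^{r}_{loc}(0,\infty;L^{p,2})$ for every admissible pair; in particular $\|u\|_{L^{\varsigma}(0,T;L^{\varrho,2})}<\infty$ for each finite $T$. Since $v\in L^{a}(0,\infty;L^{\alpha+2,\infty})$, the tail $\|v\|_{L^{a}(T,\infty;L^{\alpha+2,\infty})}$ tends to $0$ as $T\to\infty$; I would fix $T$ so that $C\|v\|_{L^{a}(T,\infty;L^{\alpha+2,\infty})}^{\alpha}\le\frac12$ ($C$ the constant just above), apply that inequality on $(T,T')$ with $(r,p)=(\varsigma,\varrho)$, and absorb the last term (legitimate since it is finite for finite $T'$) to get $\|u\|_{L^{\varsigma}(T,T';L^{\varrho,2})}\le 2C\|u(T)\|_{L^2}$ uniformly in $T'$; letting $T'\uparrow\infty$ and adding the bound on $(0,T)$ gives $u\in L^{\varsigma}(0,\infty;L^{\varrho,2})$. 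Inserting this global bound, together with the norm $\|v\|_{L^{a}(0,\infty;L^{\alpha+2,\infty})}$ (finite by hypothesis), back into the estimate above on $J=(0,\infty)$ yields $u\in L^{r}(0,\infty;L^{p,2})$ for every admissible pair, and hence $K|u|^\alpha u\in L^{\varsigma'}(0,\infty;L^{\varrho',2})$.

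For scattering I would use the dual Strichartz estimate to bound $\big\|\int_{t_1}^{t_2}e^{-is\Delta}\big(K|u|^\alpha u(s)\big)\,ds\big\|_{L^2}\lesssim\|K|u|^\alpha u\|_{L^{\varsigma'}(t_1,t_2;L^{\varrho',2})}$, which goes to $0$ as $t_1,t_2\to\infty$; hence $t\mapsto e^{-it\Delta}u(t)=u_0-i\int_0^t e^{-is\Delta}(K|u|^\alpha u)\,ds$ is Cauchy in $L^2$, its limit $\varphi^+\in L^2(\R^N)$ is the scattering state, and $\|u(t)-e^{it\Delta}\varphi^+\|_{L^2}=\|e^{-it\Delta}u(t)-\varphi^+\|_{L^2}\to0$; negative times are symmetric. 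The step I expect to be the real obstacle is the nonlinear estimate: one has to arrange that, after peeling off the singular weight (only bounded by, not equal to, $|K|^{\frac{2}{\alpha+2}}$) and the factor $|v|^{\alpha}$, the surviving first power of $u$ sits precisely in the admissible space $L^{\varsigma}(L^{\varrho,2})$ --- with the correct second Lorentz index --- so that the inequality is self-improving; this is exactly what forces the specific values of $\varrho$ (H\"older balance) and $a$ (scaling), and is where care with the Lorentz indices and with the merely bounded inhomogeneity $K$ is needed.
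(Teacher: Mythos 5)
Your argument is correct and is essentially the paper's own proof: the paper reduces Theorem \ref{globalstritchartzu} to Steps 1 and 2 of the proof of Theorem \ref{globalstritchartzuw}, and your key estimate $\|K|u|^\alpha u\|_{L^{\varrho',2}}\lesssim \||K|^{1/(\alpha+2)}u\|_{L^{\alpha+2,\infty}}^{\alpha}\|u\|_{L^{\varrho,2}}$ is exactly Lemma \ref{importantestimates}(i), followed by the same H\"older-in-time with exponent $a/\alpha$, the same tail-smallness absorption on $(T,\infty)$, and the same Cauchy argument for $e^{-it\Delta}u(t)$. Your observation that only admissible-pair Strichartz estimates are needed here also matches the paper, where the non-admissible estimates enter only in the global existence theorem.
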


We now turn to establish a criterion for the scattering in $H^1$ and in $\Sigma$, where
\begin{equation}
\label{sigma}
\Sigma:=\{\varphi\in H^1(\R^N),\; |\cdot|\varphi(\cdot)\in L^2(\R^N)\}.
\end{equation}
We will need the following hypothesis on $K$,
$$(K_3)\quad |\nabla K(x)|\lesssim |x|^{-1-\frac{2b}{\alpha+2}}|K(x)|^{\alpha /(\alpha+2)},$$
 for $ x\in \R^{N}\setminus \{0\}$.
Define $\tilde{K}(x)=|x|^{-b}$ if $K$ verifies $(K_2)$ and $\tilde{K}(x)=K(x)$ if $K$ verifies $(K_3)$.
We have obtained the following result.
\begin{theorem}[$(H^1,H^1)$ and $(\Sigma,\Sigma)$-Scattering criterion]
\label{globalstritchartzuw}
Assume $N\geq 4$, $u_0\in H^1(\R^N)$, $0<b<2$, $$ 0<\alpha< {4-2b\over N-2}$$ and $K$ be a complex valued function   satisfying the condition $(K_1)-(K_2)$ or $(K_1)$ and $(K_3)$.
 Let $a$ be given by \eqref{aP} and $u\in C([0,\infty),H^1(\R^N))$ be a global solution of \eqref{INLS}. If $|\tilde{K}(x)|^{{1/ (\alpha+2)}}u\in L^{ a}(0,\infty;L^{{\alpha+2},\infty}(\R^N))$ then  $u\in L^{r}(0,\infty;W_2^{1,p}(\R^N)),$ for any admissible pair $(r,p)$ and $u$ scatters in $H^1(\R^N).$ Moreover, if $u_0\in \Sigma$ then $u\in C([0,\infty),\Sigma)$  and it scatters in $\Sigma$ that is there exists $\varphi^+\in \Sigma$ such that
$$\lim_{t\to\infty}\|e^{-it\Delta}u(t)-\varphi^+\|_{\Sigma}=0.$$
Similar statements hold for negative time.
\end{theorem}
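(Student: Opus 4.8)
The plan is to run the by-now-standard scattering-criterion scheme, adapted to the singular inhomogeneous nonlinearity: first upgrade the a priori \emph{local} Strichartz integrability of $u$ to \emph{global} integrability on $[0,\infty)$, then close a Cauchy criterion for $e^{-it\Delta}u(t)$ in $H^1$, and finally, after propagating the weight $|\cdot|$, in $\Sigma$. Here $(a,\varrho)$ from \eqref{aP} is the $\dot H^{s_c}$-admissible pair and $\varsigma$ is as in \eqref{varsigma}; one uses throughout the Sobolev--Lorentz embeddings (which, together with $\alpha>\alpha_0(b)$, force $\varrho<N$ and hence the restriction $N\ge4$) and the Strichartz estimates for non-admissible pairs established earlier in the paper.

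The engine is a single nonlinear estimate: for every interval $I$ and every admissible pair $(r,p)$,
$$\big\|K|u|^\alpha u\big\|_{L^{r'}(I;L^{p',2})}+\big\|\nabla\big(K|u|^\alpha u\big)\big\|_{L^{r'}(I;L^{p',2})}\ \lesssim\ \big\||\tilde K|^{\frac{1}{\alpha+2}}u\big\|_{L^{a}(I;L^{\alpha+2,\infty})}^{\alpha}\,\|u\|_{L^{\varsigma}(I;W_2^{1,\varrho})}.$$
To prove it one factors $K|u|^\alpha u=\big(|\tilde K|^{-\frac{\alpha}{\alpha+2}}K\big)\cdot\big(|\tilde K|^{\frac{1}{\alpha+2}}u\big)^{\alpha}\cdot u$, notes that $|\tilde K|^{-\frac{\alpha}{\alpha+2}}K$ has modulus $\lesssim|x|^{-2b/(\alpha+2)}\in L^{N(\alpha+2)/(2b),\infty}$ under $(K_1)$, and applies the Hölder and O'Neil inequalities in Lorentz spaces — the exponents closing precisely because $(a,\varrho)$ is $\dot H^{s_c}$-admissible; the term $(\nabla K)|u|^\alpha u$ arising in $\nabla(K|u|^\alpha u)$ reduces, under either $(K_2)$ or $(K_3)$ and via Hardy's inequality $|x|^{-1}|u|\lesssim|\nabla u|$, to the same structure with $\nabla u$ in place of $u$. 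Given this estimate and the hypothesis $\||\tilde K|^{\frac{1}{\alpha+2}}u\|_{L^{a}(0,\infty;L^{\alpha+2,\infty})}<\infty$, I would split $[0,\infty)=\bigcup_{j=1}^{n}I_j$, $I_j=[t_j,t_{j+1}]$, so that the weighted norm is $\le\eta$ on each $I_j$ with $\eta$ small; the local theory of \cite{AT-1} makes the Strichartz norms a priori finite on each bounded $I_j$, so Duhamel's formula \eqref{intINLS}, the Strichartz estimates and the inequality above give $\|u\|_{L^r(I_j;W_2^{1,p})}\le C\|u(t_j)\|_{H^1}+C\eta^{\alpha}\|u\|_{L^r(I_j;W_2^{1,p})}$, whence $\|u\|_{L^r(I_j;W_2^{1,p})}\le 2C\|u(t_j)\|_{H^1}$ and $\|u(t_{j+1})\|_{H^1}\lesssim\|u(t_j)\|_{H^1}$. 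Iterating over the finitely many intervals — and running the same estimate on $[t_n,T]$ with $T\uparrow\infty$ for the last, unbounded one — yields $u\in L^r(0,\infty;W_2^{1,p}(\R^N))$ for every admissible $(r,p)$ and $\sup_{t\ge0}\|u(t)\|_{H^1}<\infty$.

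Scattering in $H^1$ then follows by the usual argument: set $\varphi^+=u_0-i\int_0^\infty e^{-is\Delta}(K|u|^\alpha u)\,ds$; the integral converges in $H^1$ by the dual Strichartz estimate and the nonlinear bound, and $\|e^{-it\Delta}u(t)-\varphi^+\|_{H^1}=\big\|\int_t^\infty e^{-is\Delta}(K|u|^\alpha u)\,ds\big\|_{H^1}\lesssim\||\tilde K|^{\frac{1}{\alpha+2}}u\|_{L^{a}(t,\infty;L^{\alpha+2,\infty})}^{\alpha}\,\|u\|_{L^{\varsigma}(t,\infty;W_2^{1,\varrho})}\to0$ as $t\to\infty$, both factors being tails of finite quantities. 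For the $\Sigma$-statement with $u_0\in\Sigma$, introduce $J(t)=x+2it\nabla=e^{it\Delta}\,x\,e^{-it\Delta}$, so that $J(t)e^{i(t-s)\Delta}=e^{i(t-s)\Delta}J(s)$ and $\|\,|x|\,e^{-it\Delta}v\|_{L^2}=\|J(t)v\|_{L^2}$; using $J(t)(Kg)=KJ(t)g+2it(\nabla K)g$ and the gauge identity $J(t)(|u|^\alpha u)=\frac{\alpha+2}{2}|u|^\alpha J(t)u-\frac{\alpha}{2}|u|^{\alpha-2}u^2\overline{J(t)u}$, the Duhamel formula \eqref{intINLS} yields
$$J(t)u(t)=e^{it\Delta}(xu_0)-i\int_0^t e^{i(t-s)\Delta}\Big[K\big(\tfrac{\alpha+2}{2}|u|^\alpha J(s)u-\tfrac{\alpha}{2}|u|^{\alpha-2}u^2\overline{J(s)u}\big)+2is(\nabla K)|u|^\alpha u\Big]\,ds,$$
in which the $K(\cdots)$ part is linear in $J(s)u$ and is absorbed exactly as the gradient term above, while the source term $2is(\nabla K)|u|^\alpha u$ is, under $(K_2)$ (resp. $(K_3)$), of size $\lesssim s|x|^{-b-1}|u|^{\alpha+1}$ (resp. $\lesssim s|x|^{-1-2b/(\alpha+2)}|K|^{\alpha/(\alpha+2)}|u|^{\alpha+1}$) and is handled by Hardy's inequality together with the rapid time-decay encoded in $|\tilde K|^{\frac{1}{\alpha+2}}u\in L^a(0,\infty;L^{\alpha+2,\infty})$, which is precisely strong enough to absorb the growth factor $s$. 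A bootstrap over the same partition then gives $J(t)u\in L^r(0,\infty;L^{p,2})$ for admissible $(r,p)$ and $\sup_{t\ge0}\|J(t)u(t)\|_{L^2}<\infty$, i.e. $u\in C([0,\infty),\Sigma)$; running the Cauchy criterion for $J(t)u(t)=e^{it\Delta}(x\,e^{-it\Delta}u(t))$ in $L^2$ as above produces $\varphi^+\in\Sigma$ with $\|e^{-it\Delta}u(t)-\varphi^+\|_\Sigma\to0$. Negative times are identical.

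I expect the two main obstacles to be, first, the nonlinear estimate above and its $J(t)$-counterpart — keeping exact track of the singular weights produced by splitting off powers of $K$ and of $|x|$, and verifying that the Lorentz--Hölder exponents land precisely on the $\dot H^{s_c}$-admissible pair $(a,\varrho)$ so that the non-admissible Strichartz estimates actually close the argument — and, second, the control of the time-weighted source term $2is(\nabla K)|u|^\alpha u$ in the equation for $J(t)u$, which is exactly the step where the structural hypotheses $(K_2)$/$(K_3)$, Hardy's inequality and the dimension restriction $N\ge4$ enter in an essential way.
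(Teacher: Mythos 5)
Your $H^1$ part is essentially the paper's argument: the master nonlinear estimate is exactly Lemma \ref{importantestimates} (the paper proves it for the single dual pair $(\varsigma',\varrho')$, which is all that is needed — claiming it for every admissible pair is an unnecessary and unjustified overreach, but harmless), and the globalization by smallness of the tail of $\||\tilde{K}|^{1/(\alpha+2)}u\|_{L^a}$ plus the Cauchy criterion for $e^{-it\Delta}u(t)$ reproduces Steps 1--2. One remark: the reduction of the $(\nabla K)|u|^\alpha u$ term is done in the paper not by a pointwise Hardy inequality but by H\"older in Lorentz spaces (putting $|x|^{-1}$ in $L^{N,\infty}$) followed by the Sobolev--Lorentz embedding, which is where $\varrho<N$, i.e. $N\geq 4$, enters.

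The $\Sigma$ part, however, has a genuine gap at precisely the step you flag as delicate: the time-weighted term $2is(\nabla K)|u|^\alpha u$ in the equation for $J(t)u$. You propose to treat it as a source term and to control it by ``Hardy's inequality together with the rapid time-decay encoded in $|\tilde{K}|^{1/(\alpha+2)}u\in L^a(0,\infty;L^{\alpha+2,\infty})$, which is precisely strong enough to absorb the growth factor $s$.'' It is not: the hypothesis is a pure integrability statement and yields no pointwise decay, and after Hardy/Sobolev applied to $u$ you are left with estimating $\bigl\|\,s\,\||\tilde K|^{1/(\alpha+2)}u(s)\|_{L^{\alpha+2,\infty}}^{\alpha}\|\nabla u(s)\|_{L^{\varrho,2}}\bigr\|_{L^{\varsigma'}_s(T,\infty)}$; without the factor $s$ this is finite by H\"older in time ($1/\varsigma'=\alpha/a+1/\varsigma$), but the unbounded factor $s$ cannot be absorbed from these norms alone. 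The paper's mechanism (Steps 3--4) is different and is the crux of the proof: setting $v(s,x)=e^{-i|x|^2/(4s)}u(s,x)$ one has $|v|=|u|$ and $2|s|\,|\nabla v|=|w|$ with $w=(x+2is\nabla)u$, so the offending term equals $(\nabla K)|u|^{\alpha}(sv)$ in modulus, and applying Lemma \ref{importantestimates}(ii) to the function $sv$ (whose gradient is $s\nabla v\sim w$) gives
\begin{equation*}
\bigl\|\,s(\nabla K)|u|^{\alpha}u\,\bigr\|_{L^{\varrho',2}}\ \lesssim\ \||\tilde K|^{\tfrac{1}{\alpha+2}}u\|_{L^{\alpha+2,\infty}}^{\alpha}\,\|w\|_{L^{\varrho,2}},
\end{equation*}
i.e. the term is \emph{linear in $w$} and is absorbed by the very same bootstrap, with no time decay required. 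Relatedly, your ``bootstrap over the same partition'' for $J(t)u$ presupposes that its Strichartz norms are finite on bounded intervals; this is not part of the local theory for $u$ and requires its own argument (the paper's Step 3, where smallness comes from the length of the subintervals via the exponent $\delta=1-\tfrac b2-\tfrac{(N-2)\alpha}{4}>0$, again using the identity $2|t||\nabla v|=|w|$). Without these two ingredients the $\Sigma$-scattering claim does not close as written.
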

The  previous results give the following.
\begin{corollary}[Scattering for oscillating data]\label{Scattering for oscillating solutions}
Assume the hypothesis of Theorem \ref{global}. Let $\varepsilon$ be given by Theorem \ref{global} and $u_0\in H^s(\R^N)$ satisfying \eqref{globalcondi}. Then the corresponding solution $u$ of \eqref{INLS}-\eqref{InitialINLS} scatters in $H^s(\R^N)$ as $t\to \infty.$ Moreover, if $u_0\in \Sigma$   then $u$ scatters in $\Sigma$ as $t\to \infty.$

\end{corollary}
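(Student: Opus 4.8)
The plan is to combine the conclusion of Theorem \ref{global} with the scattering criteria of Theorems \ref{globalstritchartzu} and \ref{globalstritchartzuw}, so that the only real work is to verify that the small-data solution produced by Theorem \ref{global} satisfies the hypothesis ``$|\tilde K|^{1/(\alpha+2)}u\in L^a(0,\infty;L^{\alpha+2,\infty}(\R^N))$'' of the relevant criterion. First I would record what Theorem \ref{global} gives under \eqref{globalcondi}: the solution $u$ is positively global, $u\in L^a(0,\infty;L^{\varrho,q}(\R^N))$ with the bound \eqref{estimatinglobalerapid}, and $u\in L^r(0,\infty;W_2^{s,p}(\R^N))$ for every admissible pair with \eqref{estimatinglobalestrit}. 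In particular $u\in L^\varsigma_{loc}(0,\infty;L^\varrho(\R^N))$ since $(\varsigma,\varrho)$ is an admissible pair, so the structural assumptions ``$u\in C([0,\infty),L^2)\cap L^\varsigma_{loc}$'' (for $s=0$) or ``$u\in C([0,\infty),H^1)$'' (for $s=1$) in the two criterion theorems are automatic.

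The key step is the Hölder–Lorentz estimate
\[
\bigl\||\tilde K|^{1/(\alpha+2)}u\bigr\|_{L^a(0,\infty;L^{\alpha+2,\infty}(\R^N))}\lesssim \|u\|_{L^a(0,\infty;L^{\varrho,q}(\R^N))}.
\]
To see this, note that under $(K_1)$ we have $|\tilde K(x)|^{1/(\alpha+2)}\lesssim |x|^{-b/(\alpha+2)}$, and $|x|^{-b/(\alpha+2)}$ lies in the Lorentz space $L^{\theta,\infty}(\R^N)$ with $\theta=N(\alpha+2)/b$. The exponent $\varrho=N(\alpha+2)/(N-b)$ was precisely chosen (see Remark, item 10) so that $\tfrac1\theta+\tfrac1\varrho=\tfrac{b}{N(\alpha+2)}+\tfrac{N-b}{N(\alpha+2)}=\tfrac1{\alpha+2}$; hence Hölder's inequality in Lorentz spaces (with the second Lorentz indices satisfying $\tfrac1\infty+\tfrac1q\ge \tfrac1\infty$, i.e. no constraint) gives $|x|^{-b/(\alpha+2)}u\in L^{\alpha+2,\infty}$ pointwise in $t$ with norm $\lesssim \|u(t)\|_{L^{\varrho,q}}$. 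Raising to the power $a$ and integrating in $t$ over $(0,\infty)$ yields the claimed bound, which by \eqref{estimatinglobalerapid} is finite and controlled by $\|e^{it\Delta}u_0\|_{L^a(0,\infty;L^{\varrho,q})}$. (When $s=1$ and $K$ is assumed to satisfy $(K_3)$ rather than $(K_2)$, the same computation applies verbatim with $\tilde K=K$ in place of $|x|^{-b}$, since only the bound $|\tilde K|^{1/(\alpha+2)}\lesssim |x|^{-b/(\alpha+2)}$ coming from $(K_1)$ is used.)

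With this verified I would simply invoke the appropriate criterion: for $s=0$ and $\alpha_0(b)<\alpha\le (4-2b)/N$, Theorem \ref{globalstritchartzu} applies and yields $\varphi^+\in L^2(\R^N)$ with $\|u(t)-e^{it\Delta}\varphi^+\|_{L^2}\to0$; for $s=1$, $N\ge4$ and $\alpha_0(b)<\alpha<(4-2b)/(N-2)$, Theorem \ref{globalstritchartzuw} applies and yields scattering in $H^1(\R^N)$, and if moreover $u_0\in\Sigma$ then $u\in C([0,\infty),\Sigma)$ and $\|e^{-it\Delta}u(t)-\varphi^+\|_\Sigma\to0$ for some $\varphi^+\in\Sigma$. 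Finally one should check the $\Sigma$ claim is consistent across the two regimes: for $s=0$ the statement of the corollary asserts scattering in $H^s=L^2$ and, if $u_0\in\Sigma$, in $\Sigma$; the latter requires $u_0\in H^1$, which places us back in the $s=1$ hypotheses, so the $\Sigma$ conclusion is again delivered by Theorem \ref{globalstritchartzuw}. The main (and only nontrivial) obstacle is the Lorentz-Hölder bookkeeping above — in particular being careful that the second Lorentz exponents combine admissibly and that $q\ge\alpha+1$ is in force so the nonlinearity estimates underlying Theorem \ref{global} are valid — but this is exactly the computation already encoded in the definitions \eqref{aP} of $a$ and $\varrho$, so no new estimate is needed.
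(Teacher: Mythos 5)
Your proposal is correct and follows exactly the paper's own argument: Theorem \ref{global} gives $u\in L^a(0,\infty;L^{\varrho,q})$, the Lorentz--H\"older inequality (using $|x|^{-b/(\alpha+2)}\in L^{N(\alpha+2)/b,\infty}$ and $\tfrac{b}{N(\alpha+2)}+\tfrac1\varrho=\tfrac1{\alpha+2}$) upgrades this to $|\tilde K|^{1/(\alpha+2)}u\in L^a(0,\infty;L^{\alpha+2,\infty})$, and then Theorem \ref{globalstritchartzu} or \ref{globalstritchartzuw} delivers the scattering in $H^s$ and in $\Sigma$. You merely spell out the exponent bookkeeping and the $s=0$ versus $s=1$ case distinction that the paper leaves implicit, so nothing further is needed.
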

 We have the following result which shows the impact of the  decay of the potential on the range of $\alpha$ allowing the scattering.

\begin{corollary}(Scattering for potentials with different powers for singularity and decay)\label{EffectDecayK} Let $s=0$ or $s=1$. Assume that $N\geq 1$ if $s=0$ and $N\geq 4$ if $s=1$. Let $0\leq b_1<\min(2,N)$, $b_2> b_1$ and $K$ satisfying $|K(x)|\lesssim |x|^{-b_1}(1+|x|^2)^{-(b_2-b_1)/2}$. For $s=1$ we suppose further that $|\nabla K(x)|\lesssim |x|^{-b_1-1}(1+|x|^2)^{-(b_2-b_1)/2}$ if $b_1>0$ and $|\nabla K(x)|\lesssim (1+|x|^2)^{-(b_2+1)/2}$ if $b_1=0$ .  Let $\alpha_0(\min (2,b_2))<\alpha <\frac{4-2b_1}{N-2s}$ and $b$ such that
$$\max \left(b_1,\frac{4-N\alpha^2-\alpha(N-2)}{2(\alpha+1)}\right)\leq b\leq \min \left(b_2,\frac{4-(N-2s)\alpha}{2}\right).$$ Let $u_0\in H^s$. Then there exists $\varepsilon =\varepsilon (b_1,b_2,\alpha)>0,$ such that if
\begin{equation*}
\|e^{it\Delta}u_0\|_{L^a(0,\infty; L^{\varrho,q}(\R^N))}\leq \varepsilon,
 \end{equation*} where $a,\varrho$ are given by \eqref{aP},  the solution of \eqref{INLS}-\eqref{InitialINLS} is global and scatters in $H^s(\mathbb{R}^N)$. Moreover, if $u_0\in \Sigma$   then $u$ scatters in $\Sigma$ as $t\to \infty.$

In particular, if the potential $K$ is regular and has some decay at infinity, for example $K(x)=(1+|x|^2)^{-b_2/2}$ that is $b_1=0$ and $b_2>0$, then for $\alpha_0(\min (b_2,2))<\alpha <\frac{4}{N-2}$ we have scattering for small initial data in $\Sigma$. In the special case where $b_2\geq 2$, then the scattering holds for any $0<\alpha <\frac{4}{N-2}$.
 \end{corollary}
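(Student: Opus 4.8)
The plan is to route everything through Theorem \ref{global} and Corollary \ref{Scattering for oscillating solutions}. First I would verify that, for the chosen exponent $b$, the potential $K$ satisfies condition $(K_1)$ --- and condition $(K_2)$ when $s=1$ --- \emph{with that value of $b$}. To see this, split $\R^N$ into $\{|x|\le 1\}$ and $\{|x|\ge 1\}$: on the unit ball $(1+|x|^2)^{-(b_2-b_1)/2}\sim 1$, so $|K(x)|\lesssim |x|^{-b_1}\le |x|^{-b}$ since $b\ge b_1$, whereas on its complement $(1+|x|^2)^{-(b_2-b_1)/2}\sim |x|^{-(b_2-b_1)}$, so $|K(x)|\lesssim |x|^{-b_2}\le |x|^{-b}$ since $b\le b_2$. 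Thus $(K_1)$ holds with exponent $b$. The same dichotomy on the assumed gradient bound gives $(K_2)$ with exponent $b$: if $b_1>0$ then $|\nabla K(x)|\lesssim |x|^{-b_1-1}\le|x|^{-b-1}$ on the ball and $\lesssim|x|^{-b_2-1}\le|x|^{-b-1}$ outside, while if $b_1=0$ then $|\nabla K(x)|\lesssim (1+|x|^2)^{-(b_2+1)/2}$ is $\lesssim 1\lesssim|x|^{-b-1}$ for $|x|\le 1$ (because $b+1>0$) and $\lesssim|x|^{-b_2-1}\le|x|^{-b-1}$ for $|x|\ge 1$.

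The key observation is that the two endpoints of the $b$-interval in the statement encode exactly the hypotheses of Theorem \ref{global}. Substituting $\alpha$ into the left-hand side of \eqref{equationalpha0}, the lower bound $b\ge (4-N\alpha^2-\alpha(N-2))/(2(\alpha+1))$ is equivalent to $N\alpha^2+(N-2+2b)\alpha-4+2b\ge 0$; since this quadratic in $\alpha$ is convex with unique positive root $\alpha_0(b)$ (see \eqref{alpha0}), it says precisely $\alpha\ge\alpha_0(b)$, and picking $b$ in the interior of the admissible interval makes it strict. Likewise the upper bound $b\le (4-(N-2s)\alpha)/2$ rearranges to $\alpha\le (4-2b)/(N-2s)$, which is the constraint on $\alpha$ in Theorem \ref{global} for the relevant $s$ (strict when $s=1$, again by taking $b$ in the interior). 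Since also $b\ge b_1\ge 0$ and $b\le (4-(N-2s)\alpha)/2<2$, while $2\le N-2s$ for $N\ge 2s+2$, one gets $0<b<\min(2,N-2s)$. It remains to see that the admissible $b$-interval is non-degenerate: $b_1<(4-(N-2s)\alpha)/2$ follows from $\alpha<(4-2b_1)/(N-2s)$; the inequality $(4-N\alpha^2-\alpha(N-2))/(2(\alpha+1))<(4-(N-2s)\alpha)/2$ reduces to the trivial $0<2+2s(1+\alpha)$; and $(4-N\alpha^2-\alpha(N-2))/(2(\alpha+1))<b_2$ for $b_2\le 2$ is exactly $\alpha>\alpha_0(b_2)=\alpha_0(\min(2,b_2))$, whereas for $b_2\ge 2$ one simply has $(4-(N-2s)\alpha)/2<2\le b_2$. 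Hence a valid $b$ exists.

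With $b$ now fixed, the numbers $a,\varrho$ of \eqref{aP} are the ones appearing in the statement, and Theorem \ref{global} yields $\varepsilon=\varepsilon(b_1,b_2,\alpha)>0$ such that the smallness hypothesis forces the solution to be global with the bounds \eqref{estimatinglobalerapid}--\eqref{estimatinglobalestrit}. Because the hypotheses of Corollary \ref{Scattering for oscillating solutions} are exactly those of Theorem \ref{global}, that corollary then gives scattering in $H^s(\R^N)$, and in $\Sigma$ when $u_0\in\Sigma$; the $\Sigma$ part runs through Theorems \ref{globalstritchartzu} and \ref{globalstritchartzuw} applied with $\tilde K(x)=|x|^{-b}$, which is licit because the first step supplied $(K_1)$--$(K_2)$ for this $b$. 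For the particular potential $K(x)=(1+|x|^2)^{-b_2/2}$ one has $b_1=0$, the gradient hypothesis being the $b_1=0$ case treated above; when $b_2\ge 2$ the condition reads $0=\alpha_0(2)<\alpha<4/(N-2)$, and the non-degeneracy argument of the previous paragraph produces an admissible $b\in(0,2)$ for every such $\alpha$, so the first part of the statement applies and yields scattering for small data in $\Sigma$.

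The only real work is the bookkeeping of the second paragraph: recognising that the awkward-looking endpoints of the $b$-interval are nothing but $\alpha_0(b)\le\alpha\le(4-2b)/(N-2s)$, and checking that this interval meets $(0,\min(2,N-2s))$. There is no delicate analytic step, and the sole point to keep straight afterwards is that $\varepsilon$, $a$ and $\varrho$ all depend on the chosen $b$.
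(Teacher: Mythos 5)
Your overall route is exactly the paper's: check that the hypotheses on $K$ give $(K_1)$ (and $(K_2)$ for $s=1$) with the exponent $b$, recognise that the two endpoints of the $b$-interval are just reformulations of $\alpha\ge\alpha_0(b)$ and $\alpha\le(4-2b)/(N-2s)$, and then invoke Theorem \ref{global} together with Corollary \ref{Scattering for oscillating solutions}; your bookkeeping of the non-degeneracy of the $b$-interval is correct and in fact more detailed than what the paper writes. There is, however, one genuine gap: your assertion that $0<b$ is false in general. The lower bound in the statement is $\max\bigl(b_1,\frac{4-N\alpha^2-\alpha(N-2)}{2(\alpha+1)}\bigr)$, and when $b_1=0$ and $\alpha\ge\alpha_0(0)$ (the Strauss exponent) this maximum is $0$, so the statement permits $b=0$ — e.g. precisely the "regular potential" example $K(x)=(1+|x|^2)^{-b_2/2}$ that the corollary highlights. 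Your argument "$b\ge b_1\ge 0$ \ldots one gets $0<b$" does not yield strict positivity, and with $b=0$ the machinery you cite is not literally available as stated: $(K_1)$--$(K_2)$ are formulated for $0<b<\min(2,N)$ and the $(\Sigma,\Sigma)$-criterion Theorem \ref{globalstritchartzuw} assumes $0<b<2$. The paper closes this case separately: if $b=0$ (hence $b_1=0$) then $K,\nabla K\in L^\infty$, and it appeals to the Cazenave--Weissler bounded-potential results together with Corollary \ref{Scattering for oscillating solutions} (alternatively one can use the paper's remark that the proof of Theorem \ref{global} remains valid for $b=0$, but some explicit justification is needed for the scattering criterion). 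Your proof needs this extra case, or an argument that the criteria extend to $b=0$.

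A smaller point, shared with the paper's own write-up: since $b$ is part of the data of the corollary and $a,\varrho,\varepsilon$ depend on it, the endpoint configuration $b=\frac{4-N\alpha^2-\alpha(N-2)}{2(\alpha+1)}$ gives only $\alpha=\alpha_0(b)$, where Theorem \ref{global} does not apply; your remedy of "picking $b$ in the interior" silently changes the pair $(a,\varrho)$ appearing in the smallness hypothesis. The paper glosses over the same boundary issue (it asserts the closed-interval condition implies the strict inequalities), so I only flag it; the substantive defect to repair is the missing $b=0$ case described above.
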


It is known that for the defocusing case ($K\geq 0$)  the solutions of \eqref{INLS} are global. In order to study the scattering for this case we establish decay estimates in Lorentz spaces for  initial data in $\Sigma$. We need the following condition on $K$.
$$(K_4)\quad x.\nabla K(x)\leq -bK(x),$$
 for $ x\in \R^{N}\setminus \{0\}$.
We have obtained the following.
\begin{theorem}[Decay estimates]
\label{Decay Estimates}
Let $N\geq 3$, $0<b<2$,
 $$0<\alpha <{4-2b\over N-2}$$
and $K(x)\geq 0$, $K\in C^1(\R^N\setminus \{0\})$ satisfying  $(K_1)$,  $(K_3)$  and $(K_4)$. Let   $u_0\in \Sigma$  and $u\in C(\R,H^1(\R^N))$ be the solution of \eqref{INLS},  with initial $u_0$. Then $u\in C(\R,\Sigma)$ and the following hold.
\begin{itemize}
\item[(i)]If $\alpha\geq (4-2b)/N,$  then for every $2\leq p\leq 2N/(N-2)$  there exists a positive constant $C>0$ such that  for every $t\not=0,$ we have
\begin{equation}
\label{estimatelpq}
\|u(t)\|_{L^{p,2}(\R^N)}\leq C\left(\|u_0\|_{L^2(\R^N)}+\|xu_0\|_{L^2(\R^N)}\right)|t|^{-N\left({1\over 2}-{1\over {p}}\right)}.
\end{equation}

\item[(ii)] If $\alpha<(4-2b)/N,$ then  for every $p$ satisfying
$${N-2\over 2N}+\bar{\sigma}<{1\over p}<{1\over 2}+\bar{\sigma}$$
with $$\bar{\sigma}={b\over N(\alpha+2)} $$ there exists a positive constant $C>0,$ such that
\begin{equation}
\label{decy-blpq2}
\|K^{1/(\alpha+2)}u(t)\|_{L^{p,1}(\R^N)}\leq C|t|^{-N\left({1\over 2}-{1\over p}+\bar{\sigma}\right)(1-\delta)},
\; \mbox{ for every}\; t\not=0,
\end{equation}
where
$$\delta=\begin{cases}
0, & \; \mbox{ if } \; p\leq \alpha+2,\\
{N(\alpha+2)(p-\alpha-2)(4-2b-N\alpha)\over \left[N(\alpha+2)(p-2)+2bp\right]\left[4-2b-\alpha(N-2)\right]},  & \; \mbox{ if } \; p>\alpha+2.\\
\end{cases}$$
\end{itemize}
\end{theorem}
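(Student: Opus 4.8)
\emph{Proof strategy.} The proof will rest on the pseudo-conformal (Ginibre--Velo) conservation law adapted to the potential $K$. Write $J(t)=x+2it\nabla=e^{it\Delta}\,x\,e^{-it\Delta}$, which commutes with $i\partial_t+\Delta$, satisfies $J(0)=x$ and $\|J(t)\varphi\|_{L^2}=2|t|\,\|\nabla(e^{-i|x|^2/4t}\varphi)\|_{L^2}$, and is such that $w(t):=e^{-i|x|^2/4t}u(t)$ has $|w(t)|=|u(t)|$ (so every Lorentz quasi-norm of $u(t)$ equals that of $w(t)$), $\|w(t)\|_{L^2}=\|u_0\|_{L^2}$ and $\|\nabla w(t)\|_{L^2}=\tfrac1{2|t|}\|J(t)u(t)\|_{L^2}$. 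From mass and energy conservation, and since $K\ge0$, one has $\|u(t)\|_{L^2}=\|u_0\|_{L^2}$, $\tfrac12\|\nabla u(t)\|_{L^2}^2\le E(u_0)$ and $\int K|u(t)|^{\alpha+2}\le(\alpha+2)E(u_0)$, so that $\|u(t)\|_{H^1}$ is bounded uniformly in $t$. Next I would establish $u\in C(\mathbb{R},\Sigma)$: applying $J(t)$ to the integral equation \eqref{intINLS} and using $J(t)e^{i(t-s)\Delta}=e^{i(t-s)\Delta}J(s)$, the pointwise bound $|J(s)(|u|^\alpha u)|\lesssim|u|^\alpha|J(s)u|$, and $J(s)\big(K|u|^\alpha u\big)=K\,J(s)(|u|^\alpha u)+2is\,(\nabla K)|u|^\alpha u$, one runs the Strichartz fixed-point scheme of the $H^1$ local theory of \cite{AT-1}; the only new term, $2is(\nabla K)|u|^\alpha u$, is controlled by $(K_3)$ since $|2is(\nabla K)|u|^\alpha u|\lesssim|s|\,|x|^{-1-\frac{2b}{\alpha+2}}K^{\frac{\alpha}{\alpha+2}}|u|^{\alpha+1}$ is estimated exactly like the nonlinearity. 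This gives $J(\cdot)u\in C(\mathbb{R},L^2)$, hence, via $xu=J(t)u-2it\nabla u$, the claimed $u\in C(\mathbb{R},\Sigma)$.

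The heart of the matter is the conformal identity. Approximating $u_0\in\Sigma$ by Schwartz data and passing to the limit, I would prove that
\[
\Theta(t):=\|J(t)u(t)\|_{L^2}^2+\frac{8t^2}{\alpha+2}\int_{\mathbb{R}^N}K(x)|u(t,x)|^{\alpha+2}\,dx
\]
is $C^1$ with $\Theta'(t)=t\big(16E(u_0)-\mathcal V''(t)\big)$, where $\mathcal V(t)=\int|x|^2|u(t)|^2\,dx$ obeys the virial identity
\[
\mathcal V''(t)=8\|\nabla u(t)\|_{L^2}^2+\frac{4N\alpha}{\alpha+2}\int K|u|^{\alpha+2}-\frac{8}{\alpha+2}\int(x\cdot\nabla K)|u|^{\alpha+2};
\]
all the weighted integrals are finite by the Hardy--Sobolev inequality, since $0<b<2$ and $0<\alpha<(4-2b)/(N-2)$. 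Substituting $16E(u_0)=8\|\nabla u\|_{L^2}^2+\tfrac{16}{\alpha+2}\int K|u|^{\alpha+2}$ and using hypothesis $(K_4)$, i.e. $\int(x\cdot\nabla K)|u|^{\alpha+2}\le-b\int K|u|^{\alpha+2}$ (valid since $K\ge0$), I obtain
\[
\Theta'(t)\le\frac{4t\,(4-2b-N\alpha)}{\alpha+2}\int_{\mathbb{R}^N}K(x)|u(t,x)|^{\alpha+2}\,dx\qquad(t>0),
\]
with the reversed inequality for $t<0$. It is exactly here that $(K_4)$ shifts the usual Strauss threshold from $\alpha=4/N$ (the case $b=0$) to $\alpha=(4-2b)/N$.

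For part (i), $\alpha\ge(4-2b)/N$, the right-hand side above is $\le0$ for $t>0$ and $\ge0$ for $t<0$, so $\Theta$ is monotone on each half-line and $\Theta(t)\le\Theta(0)=\|xu_0\|_{L^2}^2$; as all terms of $\Theta$ are nonnegative, $\|J(t)u(t)\|_{L^2}\le\|xu_0\|_{L^2}$. The sharp Sobolev--Lorentz interpolation $\|\varphi\|_{L^{p,2}}\lesssim\|\varphi\|_{\dot H^\theta}\lesssim\|\varphi\|_{L^2}^{1-\theta}\|\nabla\varphi\|_{L^2}^\theta$, with $\theta=N(\tfrac12-\tfrac1p)$ and $2\le p\le\tfrac{2N}{N-2}$, applied to $w(t)$ then yields \eqref{estimatelpq}. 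For part (ii), $\alpha<(4-2b)/N$, I would insert $\int K|u|^{\alpha+2}\le\tfrac{\alpha+2}{8t^2}\Theta(t)$ into the last inequality to get $\Theta'(t)\le\tfrac{4-2b-N\alpha}{2t}\Theta(t)$ for $t\ge1$; Gronwall, together with the finiteness of $\Theta(1)$ from the first step, gives $\Theta(t)\lesssim t^{(4-2b-N\alpha)/2}$, hence
\[
\int K|u(t)|^{\alpha+2}\lesssim t^{-(N\alpha+2b)/2},\qquad\text{i.e.}\qquad\|K^{1/(\alpha+2)}u(t)\|_{L^{\alpha+2}}\lesssim t^{-\mu_\star},\quad\mu_\star:=N\!\Big(\tfrac12-\tfrac1{\alpha+2}+\bar\sigma\Big).
\]
On the other hand, $(K_1)$, Hölder in Lorentz spaces ($|x|^{-b/(\alpha+2)}\in L^{N(\alpha+2)/b,\infty}$) and the Sobolev--Lorentz embedding $\dot H^{N(1/2-1/p_\ast)}\hookrightarrow L^{p_\ast,\infty}$ (with $\tfrac1{p_\ast}:=\tfrac1p-\bar\sigma$) give, for every admissible $p$ in part (ii),
\[
\|K^{1/(\alpha+2)}u(t)\|_{L^{p,\infty}}\lesssim\||x|^{-b/(\alpha+2)}u(t)\|_{L^{p,\infty}}\lesssim\|u(t)\|_{L^{p_\ast,\infty}}\lesssim\|u(t)\|_{H^1}\lesssim1 .
\]
Interpolating in Lorentz spaces the decaying bound at $p=\alpha+2$ (equivalently, the control of $\|u(t)\|_{L^{\varrho,2}}$, with $\varrho$ as in \eqref{aP}) against this uniform bound taken at the endpoint of the admissible range on the same side of $\alpha+2$ as $p$ — namely $p_{0\ast}=2$ (uniform bound $\|u_0\|_{L^2}$) for $p\le\alpha+2$, and $p_{0\ast}=\tfrac{2N}{N-2}$ (uniform bound $\|\nabla u(t)\|_{L^2}\le\sqrt{2E(u_0)}$) for $p>\alpha+2$ — produces $\|K^{1/(\alpha+2)}u(t)\|_{L^{p,1}}\lesssim t^{-(1-\eta)\mu_\star}$ with $\tfrac1p=\tfrac{1-\eta}{\alpha+2}+\tfrac{\eta}{p_0}$. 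A direct computation shows $(1-\eta)\mu_\star=N(\tfrac12-\tfrac1p+\bar\sigma)$ for $p\le\alpha+2$, so $\delta=0$, and $(1-\eta)\mu_\star=N(\tfrac12-\tfrac1p+\bar\sigma)(1-\delta)$ for $p>\alpha+2$, with $\delta$ exactly the quantity in the statement. Negative times are handled identically.

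I expect two steps to be the real work. First, the rigorous proof of the conformal identity for $\Sigma$-data with a singular inhomogeneous potential: the weighted quantities $\int K|u|^{\alpha+2}$ and $\int(x\cdot\nabla K)|u|^{\alpha+2}$ must be controlled uniformly along the approximation, which is where $(K_1)$, $(K_3)$, the Hardy--Sobolev range $0<b<2$, $\alpha<(4-2b)/(N-2)$, and (for the sign of $\Theta'$) $(K_4)$ all enter. Second, the Lorentz-space bookkeeping in part (ii): extracting the sharp exponent $\mu_\star$ from the conformal law at the single value $p=\alpha+2$, and then selecting the interpolation endpoints so that one recovers the exact rate $N(\tfrac12-\tfrac1p+\bar\sigma)$ for $p\le\alpha+2$ and the explicit loss $\delta$ for $p>\alpha+2$; it is this step that forces the precise algebraic form of $\delta$.
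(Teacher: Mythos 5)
Your overall strategy --- the pseudo-conformal inequality obtained from $(K_4)$, monotonicity of the conformal quantity for $\alpha\ge(4-2b)/N$, Gronwall for $\alpha<(4-2b)/N$, and Lorentz interpolation starting from the single decaying norm at $p=\alpha+2$ --- is exactly the paper's (the paper quotes the identity \eqref{identite1} from Dinh and works with its integral form rather than differentiating, but that is immaterial). Part (i) and the case $p\le\alpha+2$ of part (ii) are correct as you describe them.

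There is, however, a genuine gap in the case $p>\alpha+2$: your second interpolation endpoint is too weak, and the ``direct computation'' you assert is false. With $\tfrac1p=\tfrac{1-\eta}{\alpha+2}+\eta\bigl(\tfrac{N-2}{2N}+\bar\sigma\bigr)$ one finds $\eta=\tfrac{2N(p-\alpha-2)}{p(4-2b-\alpha(N-2))}$, so interpolating the decaying bound $\|K^{1/(\alpha+2)}u(t)\|_{L^{\alpha+2}}\lesssim |t|^{-\mu_\star}$, $\mu_\star=\tfrac{N\alpha+2b}{2(\alpha+2)}$, against a \emph{time-uniform} bound at the other endpoint (your $\|\nabla u(t)\|_{L^2}\le\sqrt{2E(u_0)}$) only gives the rate $(1-\eta)\mu_\star$, which falls short of the stated rate $N(\tfrac12-\tfrac1p+\bar\sigma)(1-\delta)$ by exactly $\eta\,\tfrac{N\alpha+2b}{4}$; for instance $N=4$, $b=1$, $\alpha=0.2$, $p=2.5$ gives exponent $\approx0.25$ from your scheme versus $\approx0.67$ in the theorem. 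The missing ingredient is that the Gronwall bound you already derived controls the \emph{whole} of $\Theta$, hence also $\|(x+2it\nabla)u(t)\|_{L^2}^2\lesssim|t|^{(4-2b-N\alpha)/2}$, i.e. $\|\nabla v(t)\|_{L^2}\lesssim|t|^{-(N\alpha+2b)/4}$ for $v=e^{-i|x|^2/4t}u$. The paper places at the upper endpoint the weighted norm $\||\cdot|^{-b/(\alpha+2)}v(t)\|_{L^{Np_1/(N-p_1),2}}$ with $\tfrac1{p_1}=\tfrac12+\bar\sigma$ (the same Lebesgue exponent $\tfrac{N-2}{2N}+\bar\sigma$ as yours) and bounds it by $\|\nabla v(t)\|_{L^2}$ via H\"older and Sobolev in Lorentz spaces, the term $|x|^{-b/(\alpha+2)-1}|v|$ coming from the weight's derivative being handled the same way; since $|v|=|u|$, the same interpolation parameter then yields the rate $\tfrac{N\alpha+2b}{4(\alpha+2)}(2+\alpha\eta)=N(\tfrac12-\tfrac1p+\bar\sigma)(1-\delta)$ with precisely the $\delta$ of the statement. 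So replace the energy bound $\|\nabla u(t)\|_{L^2}\lesssim1$ by the pseudo-conformal decay of $\nabla v$ at that endpoint, and your argument closes.
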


We have the following remarks.
\begin{remark}$ \, ${\rm
\begin{itemize}
\item[1)] The previous estimates for $b=0$ are  known in the Lebesgue spaces (see \cite{Cazenave}). Taking into account that Lorentz spaces are increasing with respect to the second indices, our estimates are more precise.
\item[2)] Other estimates are known for $b>0.$ See \cite[Theorem 1.3, p. 4]{Dinh}. For $\alpha\geq (4-2b)/N,$ our estimate, given in $L^{p,2}(\R^N),$ is more precise than the one of \cite{Dinh} established in $L^{p}(\R^N).$ For $\alpha<(4-2b)/N$ and $p\not=\alpha+2$ our result is new. In fact, even by combining \cite[estimate (1.15), p. 4]{Dinh}, with the H\"older inequality in Lorentz spaces, we get the estimate:
     $$\||\cdot|^{-b/(\alpha+2)}u(t)\|_{L^{p,q_1}(\R^N)}\leq C|t|^{-N\left({1\over 2}-{1\over p}+\bar{\sigma}\right){2b+N\alpha\over 4}},
\; \mbox{ for every}\; t\not=0,$$
with $${1\over q_1}={1\over p}-\bar{\sigma},$$  which gives a slower decay rate than \eqref{decy-blpq2} in a larger space ($q_1>q$).
\item[3)] In (i) we can deduce an estimate with weight as for (ii). In fact, let $\alpha\geq (4-2b)/N,$ and  $p$ be such that
$${N-2\over 2N}+\bar{\sigma}<{1\over p}<{1\over 2}+\bar{\sigma};\; \mbox{where}\; \bar{\sigma}={b\over N(\alpha+2)}.$$
By using H\"older's inequality in Lorentz spaces and (i), we deduce the existence of a constant $C>0,$ such that
\begin{equation}
\label{estimatex-blpq}
\left\||\cdot|^{-{b\over \alpha+2}}u(t)\right\|_{L^{p,2}(\R^N)}\leq C|t|^{-N\left({1\over 2}-{1\over p}+\bar{\sigma}\right)},
\; \mbox{ for every}\; t\not=0.
\end{equation}
\end{itemize}}
\end{remark}
Combining Theorems \ref{globalstritchartzuw} and \ref{Decay Estimates}, we deduce the following result.

\begin{corollary}[$(\Sigma,\Sigma)-$Scattering  for the  defocusing case]
\label{scatering}
Let $N\geq 4$ and $K(x)\geq 0$, $K\in C^1(\R^N\setminus \{0\})$ satisfying  $(K_1)$,  $(K_3)$  and $(K_4)$. Assume that    $$\alpha_0(b)<\alpha<{4-2b\over N-2},$$ where $\alpha_0(b)$ is defined by \eqref{alpha0}. Let $u_0\in \Sigma$ and $u$ be the unique solution of \eqref{INLS} with initial data $u_0.$ Then $u$ scatters in $\Sigma$ for positive and negative time.
\end{corollary}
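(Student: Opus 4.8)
The plan is to combine the scattering criterion of Theorem~\ref{globalstritchartzuw} with the decay estimates of Theorem~\ref{Decay Estimates}. Since $K\geq 0$ the solution $u$ is global in $H^1(\R^N)$ by conservation of mass and energy (which is well defined and positive, cf. Remark~10 after Theorem~\ref{global}), and by Theorem~\ref{Decay Estimates} we already know $u\in C(\R,\Sigma)$. Observe that the hypotheses $(K_1)$, $(K_3)$, $(K_4)$ assumed here are exactly those of Theorem~\ref{Decay Estimates}, and $(K_1)$, $(K_3)$ are among the alternatives allowed in Theorem~\ref{globalstritchartzuw} (with $\tilde K = K$); moreover $N\geq 4$ and $0<\alpha<(4-2b)/(N-2)$ are compatible. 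Hence it suffices to verify the single hypothesis of Theorem~\ref{globalstritchartzuw}, namely
\begin{equation}
\label{toverify}
K^{1/(\alpha+2)}u\in L^{a}(0,\infty;L^{\alpha+2,\infty}(\R^N)),
\end{equation}
and the symmetric statement on $(-\infty,0)$; then $u$ scatters in $H^1$, and since $u_0\in\Sigma$ the theorem upgrades this to scattering in $\Sigma$.

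To establish \eqref{toverify} I would split into the two regimes of Theorem~\ref{Decay Estimates}. First I would show that the condition $\alpha>\alpha_0(b)$ is precisely what guarantees the relevant time-integrability. Recall $a=2\alpha(\alpha+2)/(4-2b-\alpha(N-2))$, which is finite and $>1$ exactly for $\alpha>\alpha_0(b)$ (stated after \eqref{aP}). In the case $\alpha\geq(4-2b)/N$ I would use estimate \eqref{estimatex-blpq} from Remark~3(3): choosing $p=\alpha+2$ (which satisfies the admissible window $\tfrac{N-2}{2N}+\bar\sigma<\tfrac1p<\tfrac12+\bar\sigma$ since $2<\alpha+2<2N/(N-2)$ and the shift by $\bar\sigma=b/(N(\alpha+2))$ is small), and noting that $|\cdot|^{-b/(\alpha+2)}\gtrsim K^{1/(\alpha+2)}$ pointwise by $(K_1)$, gives
$$\|K^{1/(\alpha+2)}u(t)\|_{L^{\alpha+2,2}(\R^N)}\lesssim |t|^{-N(\frac12-\frac1{\alpha+2}+\bar\sigma)}.$$
The exponent here equals $N(\tfrac12-\tfrac1{\alpha+2})-\tfrac{b}{\alpha+2} = \tfrac{N\alpha}{2(\alpha+2)}-\tfrac{b}{\alpha+2} = \tfrac{N\alpha-2b}{2(\alpha+2)}$. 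A short computation shows $a$ times this exponent equals $\tfrac{\alpha(N\alpha-2b)}{4-2b-\alpha(N-2)}$, which is $>1$ precisely when $N\alpha^2+(N-2+2b)\alpha-(4-2b)>0$, i.e. $\alpha>\alpha_0(b)$. Since near $t=0$ the $\Sigma$-continuity gives local boundedness of $\|K^{1/(\alpha+2)}u(t)\|_{L^{\alpha+2,2}}$ (using $L^{\alpha+2,2}\hookrightarrow L^{\alpha+2,\infty}$ and $(K_1)$ with the Hardy--Lorentz inequality), the $L^a(0,\infty;L^{\alpha+2,\infty})$ norm is finite, which is \eqref{toverify}.

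In the case $\alpha<(4-2b)/N$ I would instead invoke \eqref{decy-blpq2} with $p=\alpha+2$, which falls in the regime $p\leq\alpha+2$, hence $\delta=0$, yielding
$$\|K^{1/(\alpha+2)}u(t)\|_{L^{\alpha+2,1}(\R^N)}\lesssim |t|^{-N(\frac12-\frac1{\alpha+2}+\bar\sigma)}$$
with the same exponent $\tfrac{N\alpha-2b}{2(\alpha+2)}$ as above; the same arithmetic shows $a$ times this exponent exceeds $1$ iff $\alpha>\alpha_0(b)$, and $L^{\alpha+2,1}\hookrightarrow L^{\alpha+2,\infty}$, while local-in-time control near $t=0$ again follows from $u\in C(\R,\Sigma)$. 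Combining, \eqref{toverify} holds in both cases, and the analogous estimates for $t<0$ follow by time reversal. Applying Theorem~\ref{globalstritchartzuw} then gives scattering in $H^1$ for positive time, and since $u_0\in\Sigma$, scattering in $\Sigma$; running the argument on $(-\infty,0)$ gives the negative-time statement.

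The main obstacle I anticipate is the bookkeeping at the two endpoints: verifying carefully that $p=\alpha+2$ lies strictly inside the admissible window $\tfrac{N-2}{2N}+\bar\sigma<\tfrac1p<\tfrac12+\bar\sigma$ of Theorem~\ref{Decay Estimates} for all $\alpha$ in the stated range (the upper bound $\tfrac1{\alpha+2}<\tfrac12+\bar\sigma$ is immediate, but the lower bound $\tfrac1{\alpha+2}>\tfrac{N-2}{2N}+\bar\sigma$, i.e. $N\alpha<\tfrac{4N}{N-2}-\tfrac{2(\alpha+2)b}{N}$ roughly, must be checked against $\alpha<(4-2b)/(N-2)$ and may be tight for $N$ large), and confirming that the algebraic identity "$a\cdot(\text{decay exponent})>1 \iff \alpha>\alpha_0(b)$" is exactly the defining inequality \eqref{equationalpha0}. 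Everything else is a routine assembly of results already proved.
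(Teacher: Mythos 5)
Your argument is essentially the paper's own proof: apply Theorem \ref{Decay Estimates} (and its weighted consequence) with $p=\alpha+2$, observe that the resulting decay rate is $a$-integrable at infinity exactly when $\alpha>\alpha_0(b)$, control $t$ near $0$ by the $\Sigma$-regularity of $u$, and conclude via Theorem \ref{globalstritchartzuw}. Two small points: the decay exponent is $N\left(\tfrac12-\tfrac1{\alpha+2}+\bar\sigma\right)=\tfrac{N\alpha+2b}{2(\alpha+2)}$ (you wrote $N\alpha-2b$ in the numerator; with the correct sign the product with $a$ exceeds $1$ iff $N\alpha^2+(N-2+2b)\alpha-(4-2b)>0$, i.e. $\alpha>\alpha_0(b)$, exactly as you asserted), and the window condition $\tfrac1{\alpha+2}>\tfrac{N-2}{2N}+\bar\sigma$ you flagged as a possible obstacle reduces identically to $\alpha<\tfrac{4-2b}{N-2}$, so it is automatic in the stated range.
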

\begin{remark}
\label{remscatering}
{\rm $\,$
\begin{itemize}
\item[1)] The previous statement holds for  $K(x)=|x|^{-b}$. For this particular case our result is new if $\alpha_0(b)<\alpha<{4-2b\over N}.$ The case  ${4-2b\over N}\leq \alpha<{4-2b\over N-2}$ is known ( see \cite[Theorem 1.4]{Dinh}). Owing to the decay estimates, we provide a simpler proof.
\item[2)] The cases $N=2,\; N=3$ and $\alpha=\alpha_0(b)$ will be treated in forthcoming papers.
\end{itemize}}
\end{remark}

The rest of the paper is organized as follows. Section 2 is devoted to some preliminaries. In particular we establish Strichartz estimates for non-admissible pairs in Lorentz spaces. See Proposition \ref{nonadmissiblestrichartz}. In Section 3 we prove the global existence for oscillating initial values. The proofs of the scattering criteria are given in Section 4. In Section 5 we establish the decay estimates for initial data in $\Sigma$. Section 6 is devoted to the proofs of the  scattering results. Finally, in Section 7 we give a scattering criterion and a scattering result in $H^1$.

In the sequel, a functional space on $\R^N,\; X(\R^N)$ will be  denoted simply by $X.$ The notation $A\lesssim B$ for positive numbers $A$ and $B$, means that there exists a positive constant $C$ such that $A\leq CB.$ If $A\lesssim B$ and $B\lesssim A,$ we write $A\sim B.$  $C$ will denotes a constant which may be different at different places. For $p\geq 1$, $p'=p/(p-1)$ denotes its conjugate exponent.
\section{Preliminaries}
In this section we give some preliminaries which will be needed for the proofs. For the definitions and properties of Lorentz spaces see \cite{Lemarie} and references therein.

We recall the following interpolation inequality in Lorentz spaces
\begin{equation}
\label{interpolation}
\|f\|_{L^{p,q}}\leq C\|f\|_{L^{p_1,q_1}}^\theta\|f\|_{L^{p_2,q_2}}^{1-\theta},
\end{equation}
where $1<p,\;p_1,\; p_2<\infty,$  $1\leq q,\; q_1,\; q_2\leq\infty,\; \theta\in (0,1)$ and
$${1\over p}={\theta\over p_1}+{1-\theta\over p_2},\; {1\over q}\leq {\theta\over q_1}+{1-\theta\over q_2}.$$

We define the Sobolev-Lorentz spaces (See \cite[page 571]{HYZ}) as follows
$$W^{s,p}_q(\R^N)=\{f\in \mathcal{S}'(\R^N),\; (I-\Delta)^{s/2}f\in L^{p,q}(\R^N)\},$$
$$\dot{W}^{s,p}_q(\R^N)=\{f\in \mathcal{S}'(\R^N),\; (-\Delta)^{s/2}f\in L^{p,q}(\R^N)\},$$
for $s\geq 0,\; 1< p<\infty,\; 1\leq q\leq \infty.$

We recall the homogenous Sobolev-Lorentz embedding (See \cite[theorem 2.4 (iii), p. 20]{Lemarie}) : $\dot{W}^{s,p}_q(\R^N)\hookrightarrow L^{\tilde{p},q}(\R^N),$ where $1<p<\infty,\; 1\leq q\leq \infty,0<s<\frac{N}{p}$ and
$$\frac{1}{\tilde{p}}=\frac{1}{p}-\frac{s}{N}.$$ That is  there exists a constant $C>0$ such that
\begin{eqnarray}\label{Hom-Sobolev-Inject}
\|f\|_{L^{\tilde{p},q}}\leq C\|(-\Delta)^{s/2}f\|_{L^{p,q}},\; f\in \;\dot{W}^{s,p}_q(\R^N).
\end{eqnarray}

By the well known Sobolev embedding $H^s(\R^N) \hookrightarrow L^{p}(\R^N)$   and interpolation, we  have  the following  $$H^s(\R^N) \hookrightarrow L^{p,2}(\R^N),\; s\geq 0,\; {1\over 2}-{s\over N}\leq {1\over p}\leq {1\over 2}, \; p<\infty.$$

Finally,  recall the Gagliardo-Nirenberg inequality in the Lorentz spaces
\begin{equation}
\label{gagliardoNiren1}
\|f\|_{L^{p,q}}\leq C \|(-\Delta)^{s/2}f\|_{L^{p_1,q_1}}^\theta \|f\|_{L^{p_2,q_2}}^{1-\theta},
\end{equation}
where $1<p,\; p_2<\infty,$  $1< q,\; q_1,\; q_2<\infty,\; 0<s<N,\; 1<p_1<N/s,\; 0<\theta<1,$ and
$${1\over p}={\theta\over p_1}-{\theta s\over N}+{1-\theta\over p_2},\;{1\over q}\leq  {\theta\over q_1}+{1-\theta\over q_2}.$$
See \cite[Theorem 2.1, p. 571]{HYZ}.

The following result is well known.
\begin{proposition}\label{Prop-Disp-Loren}
Let $N\geq 1$ be an integer. Let $2<p<\infty$ and $1\leq q_1\leq q_2\leq \infty$. There exists a constant $C>0$ such that for every $\varphi\in L^{p',q_1}$ we have
\begin{equation}
\Vert e^{it\Delta}\varphi\Vert_{L^{p,q_2}}\leq C |t|^{-\frac{N}{2}(1-\frac{2}{p})}\Vert \varphi\Vert_{L^{p',q_1}},\label{Est-Disp-Loren}
\end{equation}
for all $t\neq 0$.
\end{proposition}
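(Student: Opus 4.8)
The plan is to prove this dispersive estimate in Lorentz spaces by interpolating between the two endpoint cases $q_1=q_2=2$ (the $L^2\to L^2$ unitarity of $e^{it\Delta}$, which gives the exponent $0$, formally the case $p=2$) and the classical $L^1\to L^\infty$ dispersive estimate, and then upgrading the resulting $L^{p',p'}\to L^{p,p}$ bound to the sharper $L^{p',q_1}\to L^{p,q_2}$ statement via real interpolation with a fixed range of Lebesgue exponents. Concretely, the classical dispersive estimate reads
\begin{equation*}
\|e^{it\Delta}\varphi\|_{L^\infty}\leq C|t|^{-N/2}\|\varphi\|_{L^1},
\end{equation*}
and combining it with $\|e^{it\Delta}\varphi\|_{L^2}=\|\varphi\|_{L^2}$ by the Riesz--Thorin theorem yields, for $2<p<\infty$,
\begin{equation*}
\|e^{it\Delta}\varphi\|_{L^p}\leq C|t|^{-\frac{N}{2}(1-\frac{2}{p})}\|\varphi\|_{L^{p'}}.
\end{equation*}
So for each fixed $t\neq 0$, the operator $T_t:=e^{it\Delta}$ is bounded from $L^{p'}$ to $L^p$ with norm $\lesssim |t|^{-\frac N2(1-\frac2p)}$.

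Next I would run a real-interpolation argument to pass to Lorentz spaces. Fix $t\neq 0$ and pick exponents $p_0<p<p_1$ close to $p$ (with $p_0>2$, and $p_1<\infty$, and $p_1<\tfrac{2N}{N-2}$ is not needed here since there is no derivative — any finite $p_1$ works), so that $T_t:L^{p_i'}\to L^{p_i}$ with operator norm $M_i\lesssim |t|^{-\frac N2(1-\frac{2}{p_i})}$, $i=0,1$. Applying the real interpolation functor $(\cdot,\cdot)_{\theta,q}$ with $\tfrac1p=\tfrac{1-\theta}{p_0}+\tfrac{\theta}{p_1}$ and recalling that $(L^{p_0},L^{p_1})_{\theta,q_2}=L^{p,q_2}$ on the target side and $(L^{p_0'},L^{p_1'})_{\theta,q_1}=L^{p',q_1}$ on the source side (the two $\theta$'s coincide because $p_i\mapsto p_i'$ and $\tfrac1p+\tfrac1{p'}=1$ is affine), we obtain
\begin{equation*}
\|T_t\|_{L^{p',q_1}\to L^{p,q_2}}\leq C\,M_0^{1-\theta}M_1^{\theta}\lesssim |t|^{-\frac N2(1-\frac2p)},
\end{equation*}
since $(1-\theta)\big(1-\tfrac{2}{p_0}\big)+\theta\big(1-\tfrac{2}{p_1}\big)=1-\tfrac2p$. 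This requires $q_1\le q_2$: real interpolation of a bounded operator gives $T_t:(A_0,A_1)_{\theta,q_1}\to(B_0,B_1)_{\theta,q_2}$ whenever $q_1\le q_2$ (one uses the continuous inclusion $(B_0,B_1)_{\theta,q_1}\hookrightarrow(B_0,B_1)_{\theta,q_2}$), which is exactly the hypothesis. The cases with $q$-indices equal to $1$ or $\infty$ are covered since the identification of real interpolation spaces of the $L^p$-scale with Lorentz spaces is valid for the full range $1\le q\le\infty$.

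The only genuine point requiring care — and the place I expect the bookkeeping to concentrate — is the simultaneous matching of the interpolation parameter $\theta$ on source and target and the verification that the interpolated exponent in the power of $|t|$ comes out exactly $-\tfrac N2(1-\tfrac2p)$; this is a short computation using the affinity of $p\mapsto 1/p'$ and $p\mapsto 1-2/p$. Alternatively, and perhaps more cleanly for the write-up, one can invoke the general principle that a linear operator bounded from $L^{p_0',1}$ to $L^{p_0,\infty}$ and from $L^{p_1',1}$ to $L^{p_1,\infty}$ is, by the Marcinkiewicz/Calderón interpolation theorem, bounded from $L^{p',q_1}$ to $L^{p,q_2}$ for all $q_1\le q_2$ with the norm bound given by the geometric mean of the two endpoint norms; feeding in the endpoint dispersive bounds (which already hold in the strong $L^{p_i'}\to L^{p_i}$ form, hence a fortiori into $L^{p_i,\infty}$) yields \eqref{Est-Disp-Loren}. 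Either route completes the proof; I would present the Marcinkiewicz version as the one-paragraph argument and reference \cite{Lemarie} for the Lorentz-space interpolation facts.
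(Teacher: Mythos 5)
Your argument is correct. Note that the paper itself offers no proof of this proposition: it is stated as a well-known fact (with the Lorentz/interpolation background delegated to \cite{Lemarie}), so there is no "paper proof" to compare against; your write-up supplies the standard justification. The route you take --- Riesz--Thorin between $L^2$-unitarity and the $L^1\to L^\infty$ dispersive bound, then real interpolation at two auxiliary Lebesgue exponents $p_0<p<p_1$ with the same $\theta$ on source and target (affinity of $1/p\mapsto 1/p'$), followed by the embedding $L^{p,q_1}\hookrightarrow L^{p,q_2}$ for $q_1\le q_2$ --- is sound, and the two points that genuinely need checking (the matching of $\theta$ and the fact that the interpolation constant is independent of $t$, so the power $|t|^{-\frac N2(1-\frac 2p)}$ comes out exactly) are both handled. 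A small simplification worth noting: you can skip the Riesz--Thorin step and the auxiliary exponents altogether by applying the real method directly to the couples $(L^1,L^2)\to(L^\infty,L^2)$ with $\theta=2/p$, which yields $L^{p',q}\to L^{p,q}$ for every $1\le q\le\infty$ in one stroke with norm $\lesssim |t|^{-\frac N2(1-\theta)}=|t|^{-\frac N2(1-\frac 2p)}$; your Marcinkiewicz/Calder\'on variant is an equally acceptable packaging of the same idea.
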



The following theorem is a key tool for our work.
\begin{proposition}\cite[Theorem 10.1]{KT}
\label{Strichartzestimhominhom}
(i) Let $(r,p)$ be an admissible pair with $p<\infty$. Then there exists a constant $C>0$, such that
\begin{eqnarray}
\label{Strichartzestimhom}
\left\Vert e^{it\Delta}\varphi\right\Vert_{L^{r}(\mathbb{R}, L^{p,2}(\mathbb{R}^N))}& \leq & C \Vert \varphi\Vert_{L^{2}(\mathbb{R}^N)},
\end{eqnarray}
for every $\varphi\in L^2$.

(ii) Let $(r_i,p_i)$; $i=1,2$ be two admissible pairs with $p_i<\infty$. Then there exists a constant $C>0$, such that
\begin{eqnarray}
\label{Strichartzestim}
\left\Vert \int_0^t e^{i(t-s)\Delta}f(s,.)ds\right\Vert_{L^{r_1}(\mathbb{R},L^{p_1,2}(\mathbb{R}^N))}& \leq & C \Vert f\Vert_{L^{r_2'}(\mathbb{R},L^{p_2',2}(\mathbb{R}^N))},
\end{eqnarray}
for every $f\in L^{r_2'}(\mathbb{R},L^{p_2',2}(\mathbb{R}^N))$.
\end{proposition}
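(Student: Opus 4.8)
\textit{Proof proposal.} The statement is the Keel--Tao abstract Strichartz estimate \cite{KT} specialised to the unitary group $U(t)=e^{it\Delta}$, so the plan is to run that machinery with the two structural inputs at our disposal: the $L^2$ conservation law $\|e^{it\Delta}\varphi\|_{L^2}=\|\varphi\|_{L^2}$, and the \emph{Lorentz} dispersive estimate of Proposition \ref{Prop-Disp-Loren} taken with $q_1=q_2=2$, i.e. $\|e^{it\Delta}\varphi\|_{L^{p,2}}\lesssim|t|^{-N(1/2-1/p)}\|\varphi\|_{L^{p',2}}$ for $2<p<\infty$. First I would reduce everything to a single $TT^*$ bound. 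Writing $T\varphi=e^{it\Delta}\varphi$, the homogeneous estimate \eqref{Strichartzestimhom} is, by duality, equivalent to $\|T^*G\|_{L^2}\lesssim\|G\|_{L^{r'}(\R,L^{p',2})}$, and since $\|T^*G\|_{L^2}^2=\langle TT^*G,G\rangle$ with $TT^*G=\int_{\R}e^{i(t-s)\Delta}G(s)\,ds$, it suffices to prove $TT^*\colon L^{r'}(\R,L^{p',2})\to L^{r}(\R,L^{p,2})$. This is exactly \eqref{Strichartzestim} in the diagonal case $(r_1,p_1)=(r_2,p_2)$ for the full integral, with the genuine retarded integral $\int_0^t$ recovered afterwards by the Christ--Kiselev lemma, which applies because the admissible exponents satisfy $r_1>r_2'$ off the endpoint; the off-diagonal inhomogeneous estimate then follows by combining the homogeneous estimate for $(r_1,p_1)$ with its dual for $(r_2,p_2)$.

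For the non-endpoint pairs ($p<2N/(N-2)$, equivalently $r>2$) I would argue directly. The admissibility relation gives $N(1/2-1/p)=2/r$, so the Lorentz dispersive estimate reads $\|e^{i(t-s)\Delta}G(s)\|_{L^{p,2}_x}\lesssim|t-s|^{-2/r}\|G(s)\|_{L^{p',2}_x}$. Plugging this into $TT^*G$, pulling the $L^{p,2}_x$-norm inside by Minkowski's integral inequality, and then invoking the one-dimensional Hardy--Littlewood--Sobolev inequality in $t$ (legitimate since $0<2/r<1$ and the exponents balance, $1/r=1/r'-(1-2/r)$) yields $\|TT^*G\|_{L^r_tL^{p,2}_x}\lesssim\|G\|_{L^{r'}_tL^{p',2}_x}$. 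The second Lorentz index $2$ is simply carried along, since it never enters the time integration. This already proves the full statement when $N=1,2$, where no endpoint pair occurs in the admissible range considered here.

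The remaining case is the endpoint $(r,p)=(2,2N/(N-2))$ for $N\geq3$, where $2/r=1$ and Hardy--Littlewood--Sobolev fails; here I would follow \cite{KT}: decompose $TT^*=\sum_{j\in\mathbb{Z}}T_j$ dyadically in $|t-s|\sim 2^{j}$, prove the bilinear bound $|\langle T_jG,H\rangle|\lesssim 2^{-j\beta}\|G\|_{L^{\tilde r'}_tL^{\tilde p',2}_x}\|H\|_{L^{\tilde r'}_tL^{\tilde p',2}_x}$ for pairs $(\tilde r,\tilde p)$ in a small neighbourhood of the endpoint by interpolating between the energy bound and the dispersive bound, and then sum over $j$ by the real-interpolation (Bourgain-type) argument of \cite{KT}. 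It is precisely this final summation via the real method, rather than a crude triangle inequality, that produces the sharp Lorentz space $L^{2N/(N-2),2}_x$ on the spatial side.

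The only genuinely delicate step is this endpoint estimate: the non-endpoint part is routine once the Lorentz form of the dispersive inequality is available, whereas the endpoint requires the bilinear machinery of \cite{KT}, and extracting the sharp second Lorentz exponent there is exactly what forces the use of real interpolation in the dyadic summation. A secondary point to watch is that \eqref{Strichartzestim} for two \emph{different} admissible pairs is not a formal consequence of \eqref{Strichartzestimhom}: passing from the full integral to the time-ordered one $\int_0^t$ needs the Christ--Kiselev lemma, which is available because the hypothesis $p_i<\infty$ rules out only the one borderline configuration that is itself covered directly by the Keel--Tao estimate.
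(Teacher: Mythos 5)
Your proposal is correct and coincides with the paper's treatment: the paper offers no proof of Proposition \ref{Strichartzestimhominhom} beyond the citation to \cite[Theorem 10.1]{KT}, and your sketch is precisely the Keel--Tao machinery (the $TT^*$ reduction with the Lorentz dispersive bound of Proposition \ref{Prop-Disp-Loren} and Hardy--Littlewood--Sobolev off the endpoint, the bilinear dyadic decomposition with real interpolation at the endpoint, and Christ--Kiselev for the retarded, non-diagonal estimates). Only a minor wording slip: the hypothesis $p_i<\infty$ does not exclude the double-endpoint configuration $r_1=r_2'=2$, but as you note that case is covered directly by the retarded endpoint estimate in \cite{KT}, so nothing is missing.
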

The  following Strichartz estimates treat the case of non-admissible pairs. When the second index of the Lorentz space is equal to $2$ such estimates are well known (See \cite{taggart,Vilela}). Here we consider the general case which will be used to prove the global existence.
\begin{proposition}
\label{nonadmissiblestrichartz}
Assume that $N\geq 1$. Let $0<T\leq \infty.$ Let $(r,p)$ be an admissible pair with $p<\infty$, $r>2,$ and  $1\leq q \leq \infty.$ Fix $\sigma>r/2$ and define $\tilde{\sigma}$ by
$${1\over \tilde{\sigma}}+{1\over \sigma}={2\over r}.$$
Let $$\mathcal{F}(f)(t)=\int_0^te^{i(t-s)\Delta}f(s,\cdot)ds,\; t\in [0,T).$$
If $f\in L^{ \tilde{\sigma}'}(0,T;L^{p',q}(\R^N)),$ then $\mathcal{F}(f)\in L^{\sigma}(0,T;L^{p,q}(\R^N)).$ Moreover there exists a constant $C=C(N,p,q,\sigma)>0$ such that
$$\|\mathcal{F}(f)\|_{ L^{ \sigma}(0,T;L^{p,q}(\R^N))}\leq \|f\|_{ L^{\tilde{\sigma}'}(0,T;L^{p',q}(\R^N))}$$ for every $f\in L^{ \tilde{\sigma}'}(0,T;L^{p',q}(\R^N)).$
\end{proposition}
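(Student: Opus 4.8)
The plan is to establish the non-admissible Strichartz estimate of Proposition~\ref{nonadmissiblestrichartz} by combining the dispersive estimate in Lorentz spaces (Proposition~\ref{Prop-Disp-Loren}) with the Hardy--Littlewood--Sobolev inequality in one time variable. First I would write, for fixed $t\in[0,T)$,
\begin{equation*}
\|\mathcal{F}(f)(t)\|_{L^{p,q}}\leq \int_0^t \|e^{i(t-s)\Delta}f(s,\cdot)\|_{L^{p,q}}\,ds
\leq C\int_0^t |t-s|^{-\frac{N}{2}(1-\frac{2}{p})}\|f(s,\cdot)\|_{L^{p',q}}\,ds,
\end{equation*}
where I have used Proposition~\ref{Prop-Disp-Loren} with $q_1=q_2=q$ (here $2<p<\infty$ because $(r,p)$ is admissible with $r>2$, so $p>2$). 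The key observation is the exponent bookkeeping: since $(r,p)$ is admissible, $\frac{2}{r}+\frac{N}{p}=\frac N2$, hence $\frac N2(1-\frac2p)=\frac Nr\cdot\frac{?}{}$; more precisely $\frac{N}{2}(1-\frac{2}{p})=\frac{N}{2}-\frac{N}{p}=\frac{2}{r}$. So the convolution kernel is $|t-s|^{-2/r}$, and $0<2/r<1$ since $r>2$.

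Next I would invoke the Hardy--Littlewood--Sobolev inequality on $(0,T)$ (extending $f$ by zero to handle the half-line, or applying the standard one-dimensional weak-type convolution bound): for $0<\mu<1$,
\begin{equation*}
\Big\|\int_0^t |t-s|^{-\mu} g(s)\,ds\Big\|_{L^{\sigma}(0,T)}\leq C\,\|g\|_{L^{\tilde\sigma'}(0,T)},
\qquad \frac{1}{\sigma}+1=\frac{1}{\tilde\sigma'}+\mu.
\end{equation*}
With $\mu=2/r$ the scaling relation becomes $\frac{1}{\sigma}=\frac{1}{\tilde\sigma'}+\frac{2}{r}-1=\frac{1}{\tilde\sigma'}-\frac{1}{\tilde\sigma}$, i.e. $\frac{1}{\tilde\sigma}=\frac{1}{\tilde\sigma'}-\frac{1}{\sigma}$; one checks this is exactly the condition $\frac{1}{\tilde\sigma}+\frac{1}{\sigma}=\frac{2}{r}$ stated in the proposition (using $\frac1{\tilde\sigma'}=1-\frac1{\tilde\sigma}$). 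Applying this with $g(s)=\|f(s,\cdot)\|_{L^{p',q}}$ yields
\begin{equation*}
\|\mathcal{F}(f)\|_{L^{\sigma}(0,T;L^{p,q})}\leq C\,\|f\|_{L^{\tilde\sigma'}(0,T;L^{p',q})},
\end{equation*}
which is the claimed bound. I would also need to verify the admissibility hypotheses of HLS: we need $\mu=2/r\in(0,1)$ (true since $r>2$), $\sigma>1$ and $\tilde\sigma'>1$; the condition $\sigma>r/2$ in the statement is precisely what guarantees $\tilde\sigma>0$ and that the exponents are in the admissible HLS range (one should check $\tilde\sigma'<\infty$, equivalently $\tilde\sigma>1$, which follows from $\frac{1}{\tilde\sigma}=\frac2r-\frac1\sigma<\frac2r<1$).

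I do not expect a serious obstacle here; the main points requiring care are: (a) the passage from the operator $e^{i(t-s)\Delta}$ acting inside the time integral to the pointwise-in-$t$ norm bound, which is just Minkowski's integral inequality for the $L^{p,q}$ norm (valid since $p>1$, $1\le q\le\infty$, noting $L^{p,q}$ is a quasi-Banach space and for $q<1$ one would need more, but here $q\ge1$ so it is fine); (b) checking that the Lorentz second index is preserved (it is, because Proposition~\ref{Prop-Disp-Loren} maps $L^{p',q}\to L^{p,q}$ with the same $q$, using $q_1=q_2$); and (c) the endpoint-type verification of the HLS exponent constraints, i.e.\ confirming that $\sigma>r/2$ together with admissibility places $(\tilde\sigma',\sigma,\mu)$ strictly inside the range where the non-endpoint Hardy--Littlewood--Sobolev inequality applies, so that no weak-$L^p$ subtlety arises. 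A remark: the inequality in the statement is written without the constant $C$ on the right-hand side, but the proof naturally produces a constant $C=C(N,p,q,\sigma)$, consistent with the ``Moreover'' clause; I would state it with the constant.
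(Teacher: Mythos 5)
Your proposal is correct and follows essentially the same route as the paper: both proofs apply the Lorentz-space dispersive estimate of Proposition~\ref{Prop-Disp-Loren} (with the admissibility relation giving the kernel $|t-s|^{-2/r}$) inside the Duhamel integral, and then bound the resulting one-dimensional convolution in time; your Hardy--Littlewood--Sobolev step is the same inequality the paper invokes as Young's (O'Neil's) inequality in Lorentz spaces with $|\cdot|^{-2/r}\in L^{r/2,\infty}(\R)$, and your exponent bookkeeping, including the role of $\sigma>r/2$, matches the paper's.
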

\begin{proof}
For $t\in (0,T),$ we write
\begin{eqnarray*}
\mathcal{F}(f)(t)&=&\int_\R\chi_{[0,t]}(s)e^{i(t-s)\Delta}f(s,\cdot)ds,
\end{eqnarray*}
where $\chi_{[0,t]}$ is the characteristic function of the interval $[0,t].$ Using the dispersive estimate \eqref{Est-Disp-Loren}, we get
\begin{eqnarray*}
\|\mathcal{F}(f)(t)\|_{ L_x^{p,q}(\R^N)}&\leq &\int_\R\chi_{[0,t]}(s)\|e^{i(t-s)\Delta}f(s,\cdot)\|_{ L^{p,q}(\R^N)}ds\\
&\leq &\int_\R\chi_{[0,t]}(s)|t-s|^{-{2\over r}}\|f(s,\cdot)\|_{ L^{p',q}(\R^N)}ds\\
&= &\left(|s|^{-{2\over r}}\star\|\chi_{[0,T]}(s)f(s,\cdot)\|_{ L^{p',q}(\R^N)}\right)(t).
\end{eqnarray*}
Using Young's inequality in Lorentz space in time and since $1+{1\over \sigma}={2\over r}+{1\over\tilde{\sigma}'}$ and $r>2$ that is ${1\over \sigma}\leq {1\over \tilde{\sigma}'},$ we get
\begin{eqnarray*}
\|\mathcal{F}(f)\|_{ L^{\sigma}(0,T;L^{p,q}(\R^N)}&\leq &\left\| \left(|s|^{-{2\over r}}\star\|\chi_{[0,T]}(s)f(s,\cdot)\|_{ L^{p',q}(\R^N)}\right)(t)\right\|_{L^{\sigma}(0,T)}\\
&\leq &C\| |\cdot|^{-{2\over r}}\|_{L^{r/2,\infty}(\R)}\|\chi_{[0,T]}f\|_{L^{ \tilde{\sigma}',\sigma}(\R;L^{p',q}(\R^N))}\\
&\leq &\|f\|_{L^{ \tilde{\sigma}'}(0,T;L^{p',q}(\R^N))},
\end{eqnarray*}
This completes the proof of the lemma.
\end{proof}
\section{Global existence for oscillating initial values}
In this section we prove the global existence of solution to  equation \eqref{INLS}, that is  Theorem \ref{global}. We first establish some preliminaries results.
\begin{lemma}
\label{nonlinearnonadmissibleNgeq3}
Assume that $N\geq 1$. Let $K$ be a complex valued function   satisfying the condition $(K_1)$ and $0<\alpha<(4-2b)/(N-2)_+.$ Let $\alpha_0(b)$, $a$, $\varrho$  and $\varsigma$ be given  respectively by  \eqref{alpha0}, \eqref{aP}  and \eqref{varsigma}.  Then we have
\begin{itemize}
\item[(i)] $a>\varsigma/2$ if and only if $a>\alpha+1$ if and only if $\alpha>\alpha_0(b).$
\end{itemize}
For  $0<T\leq \infty,$ we have the following.
\begin{itemize}
\item[(ii)] If $u\in L^{a}(0,T;L^{\varrho,\infty}(\R^N))$ and $v\in L^{\varsigma}(0,T;L^{\varrho,2}(\R^N))$ then for any admissible pair $(r,p)$,  $\mathcal{F}(K|u|^\alpha v)\in L^{r}(0,T;L^{p,2}(\R^N)).$ Moreover, there exists a constant $C>0$ independent of $T$ such that
\begin{equation}
\label{similar 2.4l2}\|\mathcal{F}(K |u|^\alpha v)\|_{ L^{r}(0,T;L^{p,2}(\R^N))}\leq C \|u\|^{\alpha}_{ L^{a}(0,T;L^{\varrho,\infty}(\R^N))}\|v\|_{ L^{\varsigma}(0,T;L^{\varrho,2}(\R^N))}.
\end{equation}

\item[(iii)] If $\alpha>\alpha_0(b)$ and $u\in L^{a}(0,T;L^{\varrho,q}(\R^N)),\; 1\leq q\leq \infty$ then $\mathcal{F}(K |u|^\alpha u)\in L^{ a}(0,T;L^{\varrho,{q\over \alpha+1}}(\R^N)).$ Moreover there exists a constant $C>0$ independent of $T$ such that
\begin{equation}
\label{similar 2.3}\|\mathcal{F}(K |u|^\alpha u)\|_{ L^{a}\left(0,T;L^{\varrho,{q\over \alpha+1}}(\R^N)\right)}\leq C \|u\|^{\alpha+1}_{ L^{a}(0,T;L^{\varrho,q}(\R^N))}.
\end{equation}
\end{itemize}
\end{lemma}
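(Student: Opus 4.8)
The plan is to reduce everything to the Strichartz machinery already available: the admissible-pair estimate \eqref{Strichartzestim}, the non-admissible estimate of Proposition \ref{nonadmissiblestrichartz}, the dispersive bound \eqref{Est-Disp-Loren}, and Hölder's inequality in Lorentz spaces (plus the fact that $|x|^{-b}\in L^{N/b,\infty}$). Part (i) is pure algebra: from \eqref{aP} one checks that $a/\varsigma$ and $a/(\alpha+1)$ are monotone rational functions of $\alpha$, and substituting $\alpha=\alpha_0(b)$ into each of the inequalities $a>\varsigma/2$ and $a>\alpha+1$ reproduces, after clearing denominators, exactly the quadratic \eqref{equationalpha0}; since all quantities are increasing past the relevant threshold, strict inequality holds iff $\alpha>\alpha_0(b)$. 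I would just display the two computations $\frac{1}{\varsigma}-\frac{1}{2a}=\frac{N\alpha^2+(N-2+2b)\alpha-4+2b}{4(\alpha+2)}$ and a similar identity for $a-(\alpha+1)$, and read off the equivalences.

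For (ii) and (iii) the first step is a pointwise-in-time Hölder estimate on the nonlinearity. Using $(K_1)$, write $|K|u|^\alpha v|\lesssim |x|^{-b}|u|^\alpha |v|$, and apply Hölder in Lorentz spaces with the exponent identities $\frac{1}{\varrho'}=\frac{b}{N}+\frac{\alpha}{\varrho}+\frac{1}{\varrho}$ (which is exactly the defining relation for $\varrho$ in \eqref{aP}, as noted in Remark item 10) and the second-index relation $\frac{1}{2}=\alpha\cdot 0+\frac{1}{2}$ for (ii), respectively $\frac{\alpha+1}{q}$ for (iii). This gives
$$\||x|^{-b}|u|^\alpha v\|_{L^{\varrho',2}}\lesssim \|u\|_{L^{\varrho,\infty}}^\alpha\|v\|_{L^{\varrho,2}},\qquad \||x|^{-b}|u|^{\alpha}u\|_{L^{\varrho',q/(\alpha+1)}}\lesssim \|u\|_{L^{\varrho,q}}^{\alpha+1}.$$
The second step is to take $L^t$ norms: by Hölder in time with $\frac{1}{\varsigma'}=\frac{\alpha}{a}+\frac{1}{\varsigma}$ for (ii) — this is where $a>\varsigma/2$... actually one needs $\alpha/a+1/\varsigma\le 1$, equivalently $a\ge \alpha\varsigma/(\varsigma-1)$, which holds under $\alpha>\alpha_0(b)$ — we land $K|u|^\alpha v\in L^{\varsigma'}(0,T;L^{\varrho',2})$. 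Now $(\varsigma,\varrho)$ is an admissible pair, so the inhomogeneous Strichartz estimate \eqref{Strichartzestim} with $(r_2,p_2)=(\varsigma,\varrho)$ and arbitrary admissible $(r_1,p_1)=(r,p)$ delivers \eqref{similar 2.4l2} with a constant independent of $T$. For (iii) the situation is the non-admissible one: we have the nonlinearity in $L^{\tilde a}(0,T;L^{\varrho',q/(\alpha+1)})$ for the appropriate $\tilde a$, and we want the output in $L^a(0,T;L^{\varrho,q/(\alpha+1)})$ — wait, the target second index should be checked, but morally one applies Proposition \ref{nonadmissiblestrichartz} with $p=\varrho$, $r=\varsigma$ (so $2/r=2/\varsigma=$ the dispersive power matching $\varrho$), $\sigma=a$, and $\tilde\sigma$ determined by $1/\tilde\sigma+1/a=2/\varsigma$; the hypothesis $\sigma=a>r/2=\varsigma/2$ is exactly part (i), and one verifies $\tilde\sigma'$ equals the time exponent produced by the Hölder step. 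That yields \eqref{similar 2.3}, again with $T$-independent constant since all estimates are on $(0,T)$ with constants depending only on $N,p,q,\sigma$.

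The main obstacle is bookkeeping of the Lorentz second indices: Hölder in Lorentz spaces only controls the second index by $\frac{1}{q}\le\sum\frac{1}{q_i}$, and Proposition \ref{nonadmissiblestrichartz} requires the \emph{same} second index $q$ on both sides, so I must check that the chain (Hölder in $x$) $\to$ (Hölder in $t$) $\to$ (non-admissible Strichartz) closes with the index $q/(\alpha+1)$ consistently, using $q\ge\alpha+1$ where needed and the monotonicity/inclusion $L^{\varrho,q_1}\hookrightarrow L^{\varrho,q_2}$ for $q_1\le q_2$ to absorb any slack. The time-exponent arithmetic — verifying that $\tilde a'$ from Hölder matches $\tilde\sigma'$ from the proposition, and that $\alpha/a+1/\varsigma\le 1$ under $\alpha>\alpha_0(b)$ — is routine once (i) is in hand, so I would do (i) carefully first and then feed its inequalities into (ii) and (iii).
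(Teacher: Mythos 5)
Your proposal follows essentially the same route as the paper: part (i) by elementary algebra on \eqref{aP}--\eqref{alpha0}, part (ii) by H\"older in Lorentz spaces (using $(K_1)$ and $|x|^{-b}\in L^{N/b,\infty}$) followed by the admissible Strichartz estimate \eqref{Strichartzestim} with $(r_2,p_2)=(\varsigma,\varrho)$, and part (iii) by the same H\"older step fed into Proposition \ref{nonadmissiblestrichartz} with $r=\varsigma$, $\sigma=a$, where (i) supplies the hypothesis $a>\varsigma/2$. Two harmless slips: in (i) the displayed quotient should have denominator $4\alpha(\alpha+2)$ rather than $4(\alpha+2)$ (the sign analysis is unchanged), and in (ii) the time exponents satisfy the exact identity $\frac{1}{\varsigma'}=\frac{\alpha}{a}+\frac{1}{\varsigma}$ for all $\alpha$ in the admissible range, so no condition $\alpha>\alpha_0(b)$ is needed there, consistent with the statement.
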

\begin{proof} The proof of Part (i) follows by simple calculations using \eqref{aP} and  \eqref{alpha0}.

(ii) Using the Strichartz estimates  \eqref{Strichartzestim} and H\"older's inequality in Lorentz spaces, we get
\begin{eqnarray*}
\|\mathcal{F}(K |u|^\alpha v)\|_{ L^{r}(0,T;L^{p,2}(\R^N))} &\leq &C\|K |u|^\alpha v\|_{ L^{\varsigma'}(0,T;L^{\varrho',2}(\R^N))}\\&\leq &C\||\cdot|^{-b}|u|^\alpha v\|_{ L^{\varsigma'}(0,T;L^{\varrho',2}(\R^N))}\\
&\leq & C\||u|^\alpha v\|_{ L^{\varsigma'}(0,T;L^{\bar{\varrho},2}(\R^N))}
\\&\leq & C\||u|^\alpha\|_{ L^{{a\over \alpha}}(0,T;L^{{\varrho\over \alpha},\infty }(\R^N))}\|v\|_{ L^{\varsigma}(0,T;L^{\varrho,2}(\R^N))}\\&\leq & C\|u\|^\alpha_{ L^{{a}}(0,T;L^{{\varrho},\infty}(\R^N))}\|v\|_{ L^{\varsigma}(0,T;L^{\varrho,2}(\R^N))},
\end{eqnarray*}
where we have used
$${\alpha\over a}=1-{2\over \varsigma},\; {1\over \varrho'}={b\over N}+{1\over \bar{\varrho}}={b\over N}+{\alpha+1\over \varrho}.$$

(iii) Using Proposition  \ref{nonadmissiblestrichartz}  with $f=K |u|^\alpha u$ and the H\"older inequality in Lorentz spaces, we get
\begin{eqnarray*}
\|\mathcal{F}(K |u|^\alpha u)\|_{ L^{ a}\left(0,T;L^{\varrho,{q\over \alpha+1}}(\R^N)\right)} &\leq &\|K |u|^\alpha u\|_{ L^{\tilde{a}'}(0,T;L^{\varrho',{q\over \alpha+1}}(\R^N)}\\&\leq &\||\cdot|^{-b}|u|^\alpha u\|_{ L^{\tilde{a}'}(0,T;L^{\varrho',{q\over \alpha+1}}(\R^N)}\\
&\leq & C\|||u|^\alpha u\|_{ L^{\tilde{a}'}(0,T;L^{\bar{\varrho},{q\over \alpha+1}}(\R^N)},
\end{eqnarray*}
where ${1\over \tilde{a}}+{1\over a}={2\over\varsigma}$ and ${1\over \bar{\varrho}}={1\over \varrho'}-{b\over N}=1-{1\over \varrho}-{b\over N}<1.$ Then since, $\bar{\varrho}(\alpha+1)=\varrho$ and $\tilde{a}'(\alpha+1)=a,$ we get
\begin{eqnarray*}
\|\mathcal{F}(K|u|^\alpha u)\|_{ L^{ a}\left(0,T;L^{\varrho,{q\over \alpha+1}}(\R^N)\right)} &\leq &  C\|u\|^{\alpha+1}_{ L^{\tilde{a}'(\alpha+1)}(0,T;L^{\bar{\varrho}(\alpha+1),q}(\R^N))}\\&\leq &  C\|u\|^{\alpha+1}_{ L^{a}(0,T;L^{\varrho,q}(\R^N))}.
\end{eqnarray*}

This completes the proof of Lemma \ref{nonlinearnonadmissibleNgeq3}.
\end{proof}
\begin{lemma}
\label{nonlinearnonadmissible}
Assume that $N\geq 4 $. Let $K$ be a complex valued function   satisfying the conditions $(K_1)$ and $(K_2)$. Let $\alpha_0(b)$, $a$, $\varrho$  and $\varsigma$ be given  respectively by  \eqref{alpha0}, \eqref{aP}  and \eqref{varsigma}. Suppose that $ \alpha_0(b)<\alpha<(4-2b)/(N-2).$ If $u\in L^{a}(0,T;L^{\varrho,\infty}(\R^N))\cap L^{\varsigma}(0,T;W_2^{1,\varrho}(\R^N)),$ for  $0<T\leq \infty,$ then for any admissible pair $(r,p),$  $\mathcal{F}(\nabla K |u|^\alpha u)\in L^{r}(0,T;L^{p,2}(\R^N)).$ Moreover there exists a constant $C>0$ independent of $T$ such that
\begin{equation}
\label{similar 2.4gradienxb}
\|\mathcal{F}(\nabla K |u|^\alpha u)\|_{ L^{r}(0,T;L^{p,2}(\R^N))}\leq C \|u\|^{\alpha}_{ L^{a}(0,T;L^{\varrho,\infty}(\R^N))}\|\nabla u\|_{ L^{\varsigma}(0,T;L^{\varrho,2}(\R^N))}.
\end{equation}
Furthermore, $\mathcal{F}(K |u|^\alpha u)\in L^{r}(0,T;W_2^{1,p}(\R^N))$ and there exists a constant $C>0$ independent of $T$ such that
\begin{equation}
\label{similar 2.4}\|\mathcal{F}(K|u|^\alpha u)\|_{ L^{r}(0,T;W_2^{1,p}(\R^N))}\leq C \|u\|^{\alpha}_{ L^{a}(0,T;L^{\varrho,\infty}(\R^N))}\|u\|_{ L^{\varsigma}(0,T;W_2^{1,\varrho}(\R^N))}.
\end{equation}
\end{lemma}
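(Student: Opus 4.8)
The plan is to reduce the estimate \eqref{similar 2.4} on the full Sobolev-Lorentz norm of $\mathcal{F}(K|u|^\alpha u)$ to two pieces: the $L^{p,2}$ part of the nonlinearity itself, which is exactly \eqref{similar 2.4l2} from Lemma \ref{nonlinearnonadmissibleNgeq3} (with $v=u$), and the $L^{p,2}$ part of its gradient. Since $(I-\Delta)^{1/2}$ commutes with $e^{i(t-s)\Delta}$, we have $\nabla \mathcal{F}(K|u|^\alpha u) = \mathcal{F}(\nabla(K|u|^\alpha u))$, and the product rule gives $\nabla(K|u|^\alpha u) = (\nabla K)|u|^\alpha u + K\,\nabla(|u|^\alpha u)$. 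The second term is controlled by applying \eqref{similar 2.4l2} with $v$ replaced by a first derivative of $u$ (using $|\nabla(|u|^\alpha u)|\lesssim |u|^\alpha|\nabla u|$, which is the standard pointwise bound for this $C^1$ nonlinearity since $\alpha>0$), so it contributes $C\|u\|^\alpha_{L^a(0,T;L^{\varrho,\infty})}\|\nabla u\|_{L^\varsigma(0,T;L^{\varrho,2})}$. Thus the real content is \eqref{similar 2.4gradienxb}, the bound for the term carrying $\nabla K$.

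For \eqref{similar 2.4gradienxb}, I would mimic the proof of part (ii) of Lemma \ref{nonlinearnonadmissibleNgeq3}: apply the inhomogeneous Strichartz estimate \eqref{Strichartzestim} to pass from $\|\mathcal{F}(\nabla K|u|^\alpha u)\|_{L^r(0,T;L^{p,2})}$ to $\|\nabla K|u|^\alpha u\|_{L^{\varsigma'}(0,T;L^{\varrho',2})}$, then use $(K_2)$ to replace $|\nabla K(x)|$ by $|x|^{-b-1}$, and finally absorb the extra power $|x|^{-1}$ by pairing it with $u$: by Hölder in Lorentz spaces $\||x|^{-1}u\|_{L^{\varrho^*,2}}\lesssim \|u\|_{L^{\varrho,2}}$ where $\tfrac1{\varrho^*}=\tfrac1\varrho+\tfrac1N$ (this uses $|x|^{-1}\in L^{N,\infty}$ and $\varrho^*<\infty$, valid since $\varrho<N$ for $N\ge 4$ in the relevant range — this is exactly the constraint flagged in Remark 4 after Theorem \ref{global}). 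After that the structure is identical to (ii): write $|\nabla K||u|^\alpha u \lesssim |x|^{-b}\cdot(|x|^{-1}u)\cdot|u|^\alpha$ and split with Hölder in space as $L^{\varrho/\alpha,\infty}\cdot L^{\varrho^{**},2}$ where the exponents are arranged so that the $|x|^{-b}$ is again absorbed and $L^{\varrho^{**},2}$ ends up being the space in which $|x|^{-1}u$ lives; then Hölder in time with $\tfrac{\alpha}{a}=1-\tfrac2\varsigma$ splits $L^{\varsigma'}_t$ into $L^{a/\alpha}_t$ for the $|u|^\alpha$ factor and $L^\varsigma_t$ for the $|x|^{-1}u$ factor. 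One checks the indices close using the same algebraic identities $\tfrac1{\varrho'}=\tfrac bN+\tfrac{\alpha+1}{\varrho}$ and $\tfrac\alpha a=1-\tfrac2\varsigma$ as in Lemma \ref{nonlinearnonadmissibleNgeq3}, together with the new relation $\tfrac1\varrho+\tfrac1N$ for the weight $|x|^{-1}$.

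I expect the main obstacle to be purely bookkeeping: verifying that after extracting the extra factor $|x|^{-1}$ from $\nabla K$ and distributing it onto $u$, all the Lorentz exponents (both the integrability indices $p$ and the secondary indices $q$) still lie in the admissible ranges for the Hölder and Young inequalities recalled in Section 2, and in particular that $1\le$ the resulting secondary indices and that none of the primary indices hits $1$ or $\infty$ where the Lorentz Hölder inequality degenerates. The condition $\alpha>\alpha_0(b)$ ensures $a>\alpha+1$ and $a>\varsigma/2$ (part (i) of Lemma \ref{nonlinearnonadmissibleNgeq3}), which is what makes the time-Hölder split legitimate, while $N\ge 4$ together with $\varrho<N$ is what makes the spatial weight $|x|^{-1}$ harmless. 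Once the two contributions $(\nabla K)|u|^\alpha u$ and $K\nabla(|u|^\alpha u)$ are bounded, adding them and using $\|f\|_{W^{1,p}_2}\sim \|f\|_{L^{p,2}}+\|\nabla f\|_{L^{p,2}}$ together with \eqref{similar 2.4l2} gives \eqref{similar 2.4}, completing the proof.
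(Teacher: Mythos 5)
Your reduction of \eqref{similar 2.4} to \eqref{similar 2.4l2} (with $v=u$ and $v=\nabla u$ after the product rule) plus the $\nabla K$ term is exactly the paper's strategy, and the time-Hölder split via $\tfrac{\alpha}{a}=1-\tfrac{2}{\varsigma}$ is fine. The gap is in the one step that carries the actual content of the lemma, namely \eqref{similar 2.4gradienxb}: your spatial Hölder bookkeeping does not close, and the missing ingredient is not bookkeeping but the Sobolev (Hardy-type) embedding. Concretely, after Strichartz you must estimate $\||x|^{-b-1}|u|^{\alpha}u\|_{L^{\varrho',2}}$. Your proposed factorization puts $|x|^{-b}\in L^{N/b,\infty}$, $|u|^{\alpha}\in L^{\varrho/\alpha,\infty}$ and $|x|^{-1}u\in L^{\varrho^{*},2}$ with $\tfrac{1}{\varrho^{*}}=\tfrac{1}{\varrho}+\tfrac{1}{N}$; the reciprocals then sum to $\tfrac{b}{N}+\tfrac{\alpha+1}{\varrho}+\tfrac{1}{N}=\tfrac{1}{\varrho'}+\tfrac{1}{N}$, so the product lands in $L^{q,2}$ with $q<\varrho'$, which does not embed into $L^{\varrho',2}$ and hence never reaches the dual Strichartz exponent. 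Forcing the Hölder identity to close instead requires $|x|^{-1}u\in L^{\varrho,2}$, and the inequality $\||x|^{-1}u\|_{L^{\varrho,2}}\lesssim\|u\|_{L^{\varrho,2}}$ is false: the weight costs exactly $\tfrac{1}{N}$ of integrability, and the only way to pay for it from your hypotheses is to trade a derivative, i.e. $\||x|^{-1}u\|_{L^{\varrho,2}}\lesssim\|u\|_{L^{\frac{N\varrho}{N-\varrho},2}}\lesssim\|\nabla u\|_{L^{\varrho,2}}$ by the homogeneous Sobolev--Lorentz embedding \eqref{Hom-Sobolev-Inject}, valid because $\varrho<N$ when $N\geq 4$. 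This is precisely what the paper does (it takes out $|x|^{-b-1}\in L^{N/(b+1),\infty}$ in one stroke, splits $|u|^{\alpha}u$ as $L^{\varrho/\alpha,\infty}\cdot L^{\frac{N\varrho}{N-\varrho},2}$, and then applies Sobolev), and it is the reason the right-hand side of \eqref{similar 2.4gradienxb} features $\|\nabla u\|_{L^{\varsigma}(0,T;L^{\varrho,2})}$ rather than $\|u\|_{L^{\varsigma}(0,T;L^{\varrho,2})}$; your chain, even if repaired at the level of exponents, would produce the wrong factor. (Your aside that $\varrho<N$ is needed ``so that $\varrho^{*}<\infty$'' is empty --- $\varrho^{*}$ is always finite; the true role of $\varrho<N$, and of $N\geq4$, is to make the Sobolev embedding available.)

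One further remark: you cannot dodge the issue by choosing a different dual admissible pair in \eqref{Strichartzestim}. The exponent you would need, $\tfrac{1}{\varrho_{1}}=\tfrac{1}{\varrho}-\tfrac{1}{N}$, satisfies $\varrho_{1}>\tfrac{2N}{N-2}$ (admissibility would force $\varrho\leq2$, contradicting $\varrho>2$), so the loss of $\tfrac{1}{N}$ must be absorbed on the $u$ side via $\nabla u$, as above. With that Sobolev/Hardy step inserted, the rest of your argument goes through and coincides with the paper's proof.
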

\begin{proof}
Let $\tilde{\varrho}$ be defined by
$${1\over \tilde{\varrho}}={\alpha\over \varrho}+{N-\varrho\over N\varrho}= {\alpha+1\over  \varrho}-{1\over N}.$$ Using the Strichartz estimates  \eqref{Strichartzestim}, H\"older's and Sobolev's inequalities in Lorentz spaces, we have
\begin{eqnarray*}
\left\|\int_0^te^{i(t-s)\Delta}(\nabla K|u|^\alpha u(s))ds\right\|_{ L^{r}(0,T;L^{p,2}(\R^N))} &\leq &C\|\nabla K|u|^\alpha u\|_{ L^{\varsigma'}(0,T;L^{\varrho',2}(\R^N))}\\&& \hspace{-5cm} \leq C\||\cdot|^{-b-1}|u|^\alpha u\|_{ L^{\varsigma'}(0,T;L^{\varrho',2}(\R^N))}\\
&& \hspace{-5cm} \leq  C\||u|^\alpha u\|_{ L^{\varsigma'}(0,T;L^{\tilde{\varrho},2}(\R^N))}
\\&& \hspace{-5cm} \leq C\||u|^\alpha\|_{ L^{{a\over \alpha}}(0,T;L^{{\varrho\over \alpha},\infty}(\R^N))}\|u\|_{ L^{\varsigma}(0,T;L^{{N\varrho\over N-\varrho},2}(\R^N))}\\&& \hspace{-5cm} \leq C\|u\|^\alpha_{ L^{{a}}(0,T;L^{{\varrho},\infty}(\R^N))}\|\nabla u\|_{ L^{\varsigma}(0,T;L^{\varrho,2}(\R^N))},
\end{eqnarray*}
where we have used, $\varrho<N,$\; since $N\geq 4$ and
$$0<b<N+1,\; {1\over \varrho'}={b+1\over N}+{1\over \tilde{\varrho}}={b\over N}+{\alpha+1\over \varrho}.$$
$${1\over \varsigma'}={\alpha\over a}+{1\over \varsigma}.$$

The last statements follows by Lemma \ref{nonlinearnonadmissibleNgeq3} and the fact that
\begin{eqnarray*} \nabla(\mathcal{F}(K|u|^\alpha u))(t) & = &\int_0^te^{i(t-s)\Delta}(\nabla(K|u|^\alpha u(s)))ds\\
& = & \int_0^te^{i(t-s)\Delta}(\nabla K|u|^\alpha u(s))ds\\&&+\frac{\alpha+2}{2}\int_0^te^{i(t-s)\Delta}(K|u(s)|^\alpha \nabla u(s))ds \\&&+\frac{\alpha}{2}\int_0^te^{i(t-s)\Delta}(K|u(s)|^{\alpha-2}u^2 \nabla \overline{u}(s))ds.
\end{eqnarray*}
 This completes the proof of Lemma \ref{nonlinearnonadmissible}.
\end{proof}

We now give the proof of the global existence.
\begin{proof}[Proof of Theorem \ref{global}] Let $\varepsilon>0$ to be fixed later. Let $a,\; \varrho$ and $\varsigma$ be given by \eqref{aP}and \eqref{varsigma}. Consider $u_0\in H^s(\R^N)$, $s=0,1$, be such that $\|e^{it\Delta}u_0\|_{L^a(0,\infty; L^{\varrho,q})}\leq \varepsilon$ and $u$ be the maximal solution of \eqref{INLS} with initial data $u_0$  defined on $[0,T_{\max}(u_0))$ with $0<T_{\max}(u_0)\leq \infty$ be the maximal existence time of $u$ (see \cite{AT-1}). Let $\varsigma$ be such that $(\varsigma,\varrho)$ is an  admissible pair. It follows by applying \eqref{similar 2.3} and  \eqref{Strichartzestimhom} together with  \eqref{similar 2.4}  for $s=1$ or \eqref{similar 2.4l2} for $s=0$ on the equation \eqref{intINLS}, that there exists a positive constant $C$ such that for every $T<T_{\max}(u_0)$
\begin{equation}
\label{similar2.6}
\|u\|_{ L^{ a}(0,T;L^{\varrho,q}(\R^N)}\leq \varepsilon +C \|u\|^{\alpha+1}_{ L^{a}(0,T;L^{\varrho,q}(\R^N)},
\end{equation}
\begin{equation}
\label{similar2.7}
\|u\|_{ L^{r}(0,T;W_2^{s,p}(\R^N))}\leq C\|u_0\|_{H^1(\R^N)}+ C \|u\|^{\alpha}_{ L^{a}(0,T;L^{\varrho,q}(\R^N))}\|u\|_{ L^{\varsigma}(0,T;W_2^{s,\varrho}(\R^N))}.
\end{equation}
Choose $\varepsilon$  such that $2^{\alpha+1}C\varepsilon^\alpha<1.$ It follows by \eqref{similar2.6} and classical Gronwall's argument that
\begin{equation}
\label{similar2.9}
\|u\|_{ L^{ a}\left(0,T_{\max}(u_0);L^{\varrho,q}(\R^N)\right)}\leq 2\varepsilon.
\end{equation}
Now \eqref{similar2.7} with $(r,p)=(\varsigma, \varrho)$ and \eqref{similar2.9}, give
\begin{equation}
\label{similar2.10}
\|u\|_{ L^{\varsigma}\left(0,T_{\max}(u_0);W_2^{s,\varrho}(\R^N)\right)}\leq 2 C\|u_0\|_{H^s(\R^N)}.
\end{equation}
By the blow-up alternative (see  \cite[Theorem 1.2]{AT-1}) we conclude that $T_{\max}(u_0)=\infty.$ The other properties satisfied by the solution $u$  follow by \eqref{similar2.7}, \eqref{similar2.9} and \eqref{similar2.10} with $T=T_{\max}(u_0)=\infty.$

We now prove the estimates satisfied by $u.$ We write
$$
\|u\|_{ L^{ a}(0,\infty;L^{\varrho,q}(\R^N))}\leq \|e^{it\Delta}u_0\|_{L^a(0,\infty; L^{\varrho,q}(\R^N))} +C \|u\|^{\alpha+1}_{ L^{a}(0,\infty;L^{\varrho,q}(\R^N))}.
$$
By \eqref{similar2.9} we have
\begin{eqnarray*}
\|u\|_{ L^{ a}(0,\infty;L^{\varrho,q}(\R^N))}&\leq & \|e^{it\Delta}u_0\|_{L^a(0,\infty; L^{\varrho,q}(\R^N))} +C2^\alpha\varepsilon^\alpha\|u\|_{ L^{a}(0,\infty;L^{\varrho,q}(\R^N))}\\&\leq & \|e^{it\Delta}u_0\|_{L^a(0,\infty; L^{\varrho,q}(\R^N))} +{1\over 2} \|u\|_{ L^{a}(0,\infty;L^{\varrho,q}(\R^N))}.
\end{eqnarray*}
and \eqref{estimatinglobalerapid} follows.

Using \eqref{similar2.7}, we now write
\begin{eqnarray*}
\|u\|_{ L^{r}(0,\infty;W_2^{s,p}(\R^N))}&\leq & C\|u_0\|_{H^s(\R^N)}+ C \|u\|^{\alpha}_{ L^{a}(0,\infty;L^{\varrho,q}(\R^N))}\|u\|_{ L^{\varsigma}(0,\infty;W_2^{s,\varrho}(\R^N))}\\ &\leq & C\|u_0\|_{H^s(\R^N)}+ C2^\alpha\varepsilon^\alpha\|u\|_{ L^{\varsigma}(0,T;W_2^{s,\varrho}(\R^N))}\\ &\leq & C\|u_0\|_{H^s(\R^N)}+ {1\over 2}\|u\|_{ L^{\varsigma}(0,T;W_2^{s,\varrho}(\R^N))},
\end{eqnarray*}
and \eqref{estimatinglobalestrit} follows by \eqref{similar2.10}.

 This completes the proof of  Theorem \ref{global}.
\end{proof}

We now give examples  of initial data $u_0$ such that $\|e^{it\Delta}u_0\|_{L^a(\R; L^{\varrho,q}(\R^N))}<\infty.$
We have the following.
\begin{corollary}
\label{exempleinitialdata} Assume that $N\geq 4$. Let $K$ be a complex valued function satisfying $(K_1)$ in the  $L^2$-local theory case or $(K_1)-(K_2)$ in the  $H^1$-local theory case. Let $$\alpha_0(b)<\alpha<{4-2b\over N-2},$$
 $a,\; \varrho$ be given by \eqref{aP} and $2\leq q\leq \infty$. Let   $\varphi\in L^{\varrho',q}$ with $\varphi\in H^1$ or the Fourier transform  $\hat{\varphi}$ of $\varphi$ belongs to $L^{\varrho',q}$. Then $e^{it\Delta}\varphi\in L^a\left(\R; L^{\varrho,q}(\R^N)\right)$. Moreover the following hold.
 \begin{itemize}
 \item[(i)] If $\varphi\in L^{\varrho',q}\cap H^1$, then the $H^1$-maximal solution $u$ of \eqref{INLS} with initial value $u_0=\lambda \varphi$ for $|\lambda|$ sufficiently small is global.
 \item[(ii)] If $\varphi$, $\hat{\varphi}\in L^{\varrho',q}$ and $\alpha\leq {4-2b\over N}$ then the $L^2$-maximal solution $u$ of \eqref{INLS} with initial value $u_0=\lambda \varphi$ for $|\lambda|$ sufficiently small is global. Here we can take $N=1,\,2,\, 3$ with $0\leq b < \min(2,N)$.
  \item[(iii)] If $\varphi$ is as in (i) or (ii), then the maximal solution $u$ of
   \eqref{INLS} with initial value $\psi_s(x)=e^{{i s\Delta}}\varphi$ for $s$ sufficiently large is   positively global respectively as an $H^1$-solution and $L^2$-solution.
\end{itemize}
Moreover in all cases,  $u\in L^a\left(0,\infty; L^{\varrho,q}(\R^N)\right)$  and  scatters in  $H^\sigma$ as $t\to\infty$, for $u_0\in H^\sigma$, $\sigma=0,1$.
\end{corollary}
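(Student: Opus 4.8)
The plan is to reduce everything to Theorem \ref{global} by verifying, for each class of initial data, the smallness hypothesis \eqref{globalcondi} after a suitable rescaling or time-translation. First I would establish the space-time integrability claim $e^{it\Delta}\varphi\in L^a(\R;L^{\varrho,q})$. For $\hat\varphi\in L^{\varrho',q}$ this is immediate from the dispersive estimate \eqref{Est-Disp-Loren}: writing $e^{it\Delta}\varphi$ via its Fourier representation and applying Proposition \ref{Prop-Disp-Loren} gives $\|e^{it\Delta}\varphi\|_{L^{\varrho,q}}\lesssim |t|^{-\frac N2(1-\frac2\varrho)}\|\varphi\|_{L^{\varrho',q}}$ away from $t=0$ — but to make the $L^a_t$ norm finite near $t=0$ we instead interpolate this with the Strichartz bound \eqref{Strichartzestimhom} (using that $(a,\varrho)$ is $\dot H^{s_c}$-admissible with $a>\varsigma/2$, i.e. $\alpha>\alpha_0(b)$ by Lemma \ref{nonlinearnonadmissibleNgeq3}(i)), or else one notes that $1/a$ lies strictly between $0$ and $1/\varsigma$ so that a Hardy–Littlewood–Sobolev/interpolation argument in $t$ closes; for $\varphi\in H^1$ one uses instead the chain $\|e^{it\Delta}\varphi\|_{L^a(\R;L^{\varrho,q})}\lesssim\|\varphi\|_{H^1}$ coming from interpolating admissible Strichartz estimates together with the Sobolev–Lorentz embedding \eqref{Hom-Sobolev-Inject}, exactly as indicated in Remark (7) after Theorem \ref{global}.

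Next, for part (i) with $\varphi\in L^{\varrho',q}\cap H^1$ and part (ii) with $\varphi,\hat\varphi\in L^{\varrho',q}$ (and $\alpha\le(4-2b)/N$, $s=0$), I would simply observe that by the step above the quantity $\|e^{it\Delta}\varphi\|_{L^a(0,\infty;L^{\varrho,q})}$ is a finite number $M_\varphi$; then by linearity $\|e^{it\Delta}(\lambda\varphi)\|_{L^a(0,\infty;L^{\varrho,q})}=|\lambda|\,M_\varphi\le\varepsilon$ as soon as $|\lambda|\le\varepsilon/M_\varphi$, so Theorem \ref{global} applies to $u_0=\lambda\varphi$ and yields global existence plus $u\in L^a(0,\infty;L^{\varrho,q})$. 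For part (iii) with $u_0=\psi_s=e^{is\Delta}\varphi$, the key point is the group property $e^{it\Delta}\psi_s=e^{i(t+s)\Delta}\varphi$, hence
$$\|e^{it\Delta}\psi_s\|_{L^a(0,\infty;L^{\varrho,q})}=\|e^{it\Delta}\varphi\|_{L^a(s,\infty;L^{\varrho,q})},$$
and since $e^{it\Delta}\varphi\in L^a(\R;L^{\varrho,q})$ the right-hand side tends to $0$ as $s\to\infty$ by dominated convergence; thus it is $\le\varepsilon$ for $s$ large, and Theorem \ref{global} again gives positive global existence in $H^1$ (case (i)) or $L^2$ (case (ii)), with the corresponding rescaling of $\varepsilon$ absorbed as in (i)–(ii) if one wishes. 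Finally, the scattering assertion in each case is not a new argument: once the solution satisfies the hypotheses of Theorem \ref{global}, it satisfies $|\tilde K|^{1/(\alpha+2)}u\in L^a(0,\infty;L^{\alpha+2,\infty})$ — indeed $|K(x)|^{1/(\alpha+2)}\lesssim|x|^{-b/(\alpha+2)}$ and Hölder in Lorentz spaces together with $u\in L^a(0,\infty;L^{\varrho,q})$ and the identity $\frac{1}{\alpha+2}=\frac1\varrho+\frac{b}{N(\alpha+2)}$ (which is how $\varrho$ was chosen, cf. Remark (10)) gives the claim — so Corollary \ref{Scattering for oscillating solutions} (equivalently Theorems \ref{globalstritchartzu}, \ref{globalstritchartzuw}) yields scattering in $H^\sigma$ for $\sigma=0,1$.

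The main obstacle is the very first step: showing $\|e^{it\Delta}\varphi\|_{L^a(0,\infty;L^{\varrho,q})}<\infty$ when only $\hat\varphi\in L^{\varrho',q}$, because the bare dispersive estimate gives a singularity $|t|^{-N(1/2-1/\varrho)}$ at $t=0$ that is \emph{not} $L^a$-integrable there (indeed $N(\tfrac12-\tfrac1\varrho)=\tfrac2\varsigma>\tfrac1a$ is false in general — one must check the exponent carefully), so one genuinely needs to combine the dispersive decay at large $|t|$ with a Strichartz-type control near $t=0$, i.e. split $\R=\{|t|\le1\}\cup\{|t|\ge1\}$ and interpolate. The exponent bookkeeping here — checking $1<a<\infty$, $2<\varrho<2N/(N-2)$, and that the pair $(a,\varrho)$ sits on the correct line for both the interpolation and the later Hölder identities — is the only part requiring care; everything else is linearity, the group law, and dominated convergence. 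I would also remark that in (ii) the dimension restriction of Theorem \ref{global} is relaxed to $N=1,2,3$ because for $s=0$ no gradient estimate (hence no $\varrho<N$ constraint) is needed, which is why the statement explicitly allows those dimensions.
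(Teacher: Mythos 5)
Your overall architecture is the same as the paper's: first show $e^{it\Delta}\varphi\in L^a(\R;L^{\varrho,q})$, then get (i)--(ii) by linearity of $\lambda\mapsto e^{it\Delta}(\lambda\varphi)$, get (iii) from the group law $e^{it\Delta}\psi_s=e^{i(t+s)\Delta}\varphi$ and the vanishing of the tail of a convergent integral, and conclude scattering by H\"older in Lorentz spaces (using $\tfrac1{\alpha+2}=\tfrac1\varrho+\tfrac b{N(\alpha+2)}$) together with Theorems \ref{globalstritchartzu}, \ref{globalstritchartzuw}. Those later steps are fine. The genuine gap is in the first step, i.e. exactly the part you flag as the ``main obstacle'': the two mechanisms you propose to control the free evolution near $t=0$ do not cover the range the corollary is about. (a) For $\varphi\in H^1$ you invoke the Remark~7 chain $\|e^{it\Delta}\varphi\|_{L^a(\R;L^{\varrho,q})}\lesssim\|\varphi\|_{H^1}$; that chain requires $\tfrac1a\le\tfrac1\varsigma$, i.e. $\alpha\ge\tfrac{4-2b}N$, and it is actually \emph{false} in the novel range $\alpha_0(b)<\alpha<\tfrac{4-2b}N$: with $\varphi_\lambda(x)=\lambda^{N/2}\varphi(\lambda x)$ one has $\|e^{it\Delta}\varphi_\lambda\|_{L^a_tL^{\varrho}_x}=\lambda^{\frac2\varsigma-\frac2a}\|e^{it\Delta}\varphi\|_{L^a_tL^{\varrho}_x}\to\infty$ as $\lambda\to0$ (since $a<\varsigma$ there) while $\|\varphi_\lambda\|_{H^1}$ stays bounded; note also that if such a chain held, the hypothesis $\varphi\in L^{\varrho',q}$ would be superfluous. (b) For the case $\hat\varphi\in L^{\varrho',q}$, your claim that ``$1/a$ lies strictly between $0$ and $1/\varsigma$'' is again only true for $\alpha>\tfrac{4-2b}N$, and the Strichartz patch near $t=0$ requires $\varphi\in L^2$ and $a\le\varsigma$, so at best it covers part (ii) but not the general first assertion of the corollary.

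What is missing is the simple $t$-uniform bound that the paper combines with the dispersive decay: for $\varphi\in H^1$, $\|e^{it\Delta}\varphi\|_{L^{\varrho,q}}\lesssim\|e^{it\Delta}\varphi\|_{H^1}=\|\varphi\|_{H^1}$ by the Sobolev--Lorentz embedding (using $2<\varrho<\tfrac{2N}{N-2}$), and for $\hat\varphi\in L^{\varrho',q}$, the Hausdorff--Young inequality in Lorentz spaces gives $\|e^{it\Delta}\varphi\|_{L^{\varrho,q}}\lesssim\|\widehat{e^{it\Delta}\varphi}\|_{L^{\varrho',q}}=\|e^{-it|\cdot|^2}\hat\varphi\|_{L^{\varrho',q}}=\|\hat\varphi\|_{L^{\varrho',q}}$ for all $t$. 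These bounds handle $|t|\le1$, while for $|t|\ge1$ the dispersive estimate with $\varphi\in L^{\varrho',q}$ and $aN\bigl(\tfrac12-\tfrac1\varrho\bigr)>1$ (which is equivalent to $\alpha>\alpha_0(b)$ by Lemma \ref{nonlinearnonadmissibleNgeq3}(i)) gives integrability at infinity; this is precisely the paper's argument and it works uniformly down to $\alpha_0(b)$. Once this replacement is made, the remainder of your proposal coincides with the paper's proof.
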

\begin{proof}
 Since $\varphi\in L^{\varrho',q}$, then by the dispersive estimate there exists a constant $C>0$ such that
\begin{equation}
\label{decroissanceSigma}
\|e^{it\Delta}\varphi\|_{L^{\varrho,q}(\R^N)}\leq C|t|^{-N({1\over 2}-{1\over \varrho})}\|\varphi\|_{L^{\rho',q}(\R^N)}, \; \forall\; t\not=0.
\end{equation}

 If $\varphi\in H^1(\R^N),$ then $e^{it\Delta}\varphi\in  L^\infty (\R,H^1(\R^N))\hookrightarrow L^\infty (\R,L^{p,q}(\R^N))$ for every $2\leq p\leq {2N\over N-2}.$ this gives that there exists $C>0$ such that
 \begin{equation}
\label{decroissanceSigma22}
\|e^{it\Delta}\varphi\|_{L^{\varrho,q}(\R^N)}\leq C\|\varphi\|_{H^1(\R)},\; \forall\; t\in \R.
\end{equation}
Using the fact that $Na({1\over 2}-{1\over \varrho})>1$ which is realized if and only if $\alpha>\alpha_0(b)$ and combining \eqref{decroissanceSigma22} and \eqref{decroissanceSigma} we obtain that there exists $C>0,$ such that  $$\|e^{it\Delta}\varphi\|_{L^a(\R; L^{\varrho,q})}\leq C\left(\|\varphi\|_{H^1}+\|\varphi\|_{L^{\rho',q}}\right).$$

 If $\hat{\varphi }\in L^{\varrho',q}$ then we have
\begin{eqnarray*}
\|e^{it\Delta}\varphi\|_{L^{\varrho,q}(\R^N)}&\leq &C\|\widehat{e^{it\Delta}\varphi}\|_{L^{\varrho',q}(\R^N)}\\  & \leq &C\|e^{-it|\cdot|^2}\hat{\varphi}\|_{L^{\varrho',q}(\R^N)}\\
&= &C\|\hat{\varphi}\|_{L^{\varrho',q}(\R^N)},\quad\forall t\in \mathbb{R}.
\end{eqnarray*}
Combining this estimate with \eqref{decroissanceSigma}, we get
$$\|e^{it\Delta}\varphi\|_{L^a(\R; L^{\varrho,q})}\leq C\left(\|\hat{\varphi}\|_{L^{\varrho',q}(\R^N)}+\|\varphi\|_{L^{\rho',q}}\right).$$
Which gives the desired result.

The proof of (i) and (ii) are then obvious. For (iii) we have
\begin{eqnarray*}
\|e^{it\Delta}\psi_s\|^a_{L^a(0,\infty; L^{\varrho,q})}&=&\int_0^{\infty}\|e^{i(\tau+s)\Delta}\varphi\|^a_{L^{\varrho,q}}d\tau\\&=&\int_s^{\infty}\|e^{i\tau\Delta}\varphi\|^a_{L^{\varrho,q}}d\tau
\to 0 \mbox{ as } s\to \infty.
\end{eqnarray*}
  The last statements follows by Theorem \ref{global}.
\end{proof}

We may also construct another example of positively global solutions which blows up for negative time.

\begin{remark}{\rm  If $\varphi\in \Sigma$ then $\varphi \in H^1\cap L^{\varrho ',2}$. In fact, since $x\varphi\in L^2$, so by the H\"older enequality we have
\begin{eqnarray*}
\|\varphi\|_{ L^{\frac{2N}{N+2},2}}\leq \||x|\varphi\|_{ L^{2}}\||x|^{-1}\|_{ L^{N,\infty}}.
\end{eqnarray*} Using that $\frac{2N}{N+2}\leq \varrho '\leq 2$, then the result follows by interpolation.
}

\end{remark}

\begin{corollary}[Positively global existence]
\label{occilating}
Assume that $N\geq 4$. Let $K$ be a complex valued function satisfying $(K_1)$ in the  $L^2$-local theory case or $(K_1)-(K_2)$ in the  $H^1$-local theory case. Let $\alpha_0(b)<\alpha<{4-2b\over N-2},$ where $\alpha_0(b)$ is defined by \eqref{alpha0}.  Let $\varrho,\; a$ be given by \eqref{aP}. Then the following hold.
\begin{enumerate}
\item[(i)] If $\varphi\in \Sigma$, then there exists $d_0<\infty$ such that for every $d\geq d_0,$ the $H^1-$maximal solution $u$ of \eqref{INLS} with initial value
$\varphi_d(x)=e^{{id|x|^2\over 4}}\varphi(x),$ is   positively global. Furthermore $u\in L^a\left(0,\infty; L^{\varrho,2}(\R^N)\right)$ and it scatters in $\Sigma$ as $t\to\infty$. In the particular case $K(x)=\mu |x|^{-b}$ with $\mu<0$,  $${4-2b\over N}\leq \alpha<{4-2b\over N-2}$$ and $E(\varphi)<0$ where $E$ is defined by \eqref{energy}, the maximal existence time of $u$  is infinite and the minimal existence time  satisfies $T_{\min}(\varphi_d)<\infty.$
\item[(ii)]  Let $N\geq 1$ and $\alpha_0 (b)<\alpha\leq {4-2b\over N}$. If $\varphi\in L^2$ such that $\hat{\varphi}\in L^{\varrho',q}$  then the $L^2$-maximal solution $u$ of \eqref{INLS} with initial value $\varphi_d(x)=e^{{id|x|^2\over 4}}\varphi(x),$  for $d$ sufficiently large is global. Furthermore $u\in L^a\left(0,\infty; L^{\varrho,2}(\R^N)\right)$ and it scatters in $L^2$ as $t\to\infty$.
\end{enumerate}
\end{corollary}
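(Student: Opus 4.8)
The idea is to reduce the statement about the pseudo-conformally rescaled data $\varphi_d(x)=e^{id|x|^2/4}\varphi(x)$ to the verification of the smallness condition \eqref{globalcondi} of Theorem \ref{global}, using the explicit action of the free group on a Gaussian-type phase. The key computational fact is the lens transform / pseudo-conformal identity: for the free Schr\"odinger group one has, for $t>0$ and $d>0$,
\begin{equation*}
e^{it\Delta}\varphi_d(x)=\left(\frac{1}{1+dt}\right)^{N/2}e^{\frac{id|x|^2}{4(1+dt)}}\,\bigl(e^{i\tau(t)\Delta}\varphi\bigr)\!\left(\frac{x}{1+dt}\right),\qquad \tau(t)=\frac{t}{1+dt},
\end{equation*}
so that, taking modulus and using the dilation behaviour of the Lorentz norm $\|f(\cdot/\lambda)\|_{L^{\varrho,q}}=\lambda^{N/\varrho}\|f\|_{L^{\varrho,q}}$,
\begin{equation*}
\|e^{it\Delta}\varphi_d\|_{L^{\varrho,q}}=(1+dt)^{-N(\frac12-\frac1\varrho)}\,\|e^{i\tau(t)\Delta}\varphi\|_{L^{\varrho,q}}.
\end{equation*}
Performing the substitution $\tau=\tau(t)$, for which $d\tau=(1+dt)^{-2}dt$ and hence $dt=(1-d\tau)^{-2}d\tau$, and using $a\,N(\tfrac12-\tfrac1\varrho)=a\tfrac{2}{a}\cdot\frac{N(\tfrac12-\tfrac1\varrho)}{2/a}$ — here I would invoke Remark 11 after Theorem \ref{global}, namely $\tfrac{2-b}{\alpha}-\tfrac2a-\tfrac N\varrho=0$, which after a short manipulation using \eqref{aP} gives $aN(\tfrac12-\tfrac1\varrho)>2$ precisely when $\alpha>\alpha_0(b)$ — one finds
\begin{equation*}
\|e^{it\Delta}\varphi_d\|_{L^a(0,\infty;L^{\varrho,q})}^a=\int_0^{1/d}\|e^{i\tau\Delta}\varphi\|_{L^{\varrho,q}}^a\,(1-d\tau)^{aN(\frac12-\frac1\varrho)-2}\,d\tau\;\le\; C\int_0^{\infty}\|e^{i\tau\Delta}\varphi\|_{L^{\varrho,q}}^a\,d\tau,
\end{equation*}
the last integral being finite by the argument already carried out in the proof of Corollary \ref{exempleinitialdata} (for (i) use $\varphi\in\Sigma\subset H^1\cap L^{\varrho',2}$ via the Remark preceding this corollary; for (ii) use $\hat\varphi\in L^{\varrho',q}$ together with the dispersive estimate \eqref{decroissanceSigma}). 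Crucially, the exponent $aN(\tfrac12-\tfrac1\varrho)-2$ is positive, so $(1-d\tau)^{aN(\frac12-\frac1\varrho)-2}\le 1$ on $(0,1/d)$ and the bound is in fact $\le\int_0^\infty\|e^{i\tau\Delta}\varphi\|_{L^{\varrho,q}}^a\,d\tau$, a finite quantity \emph{independent of $d$}. This is not yet what we want: we need the right-hand side to be small.

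To get smallness I would split the $\tau$-integral at a fixed level $\tau_0$. On $(\tau_0,\infty)$ the contribution is $\le\int_{\tau_0}^\infty\|e^{i\tau\Delta}\varphi\|_{L^{\varrho,q}}^a\,d\tau$, which can be made $<\varepsilon^a/2$ by choosing $\tau_0$ large (tail of a convergent integral). On $(0,\tau_0)$, where now $1-d\tau$ ranges in $(1-d\tau_0,1)$ — assume $d$ large enough that $\tau_0<1/d$ fails; rather, split the original $t$-integral at $T_0$ and let $\tau=\tau(t)$, so that $t\in(0,T_0)$ corresponds to $\tau\in(0,\tau(T_0))$ with $\tau(T_0)\le 1/d\to 0$ as $d\to\infty$ — the contribution is $\le\int_0^{\tau(T_0)}\|e^{i\tau\Delta}\varphi\|_{L^{\varrho,q}}^a\,d\tau\to 0$ as $d\to\infty$ by dominated convergence, since $\tau(T_0)\to 0$. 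Hence for $d$ large the whole integral is $<\varepsilon^a$ with $\varepsilon$ the threshold of Theorem \ref{global}, and that theorem (for $s=1$, $N\ge4$ in case (i); for $s=0$, $N\ge1$ in case (ii)) yields positive global existence, the bound $u\in L^a(0,\infty;L^{\varrho,2})$, and — via Corollary \ref{Scattering for oscillating solutions} — scattering in $H^s$, and in $\Sigma$ in case (i) since $\varphi\in\Sigma$ forces $\varphi_d\in\Sigma$ (multiplication by a unimodular $C^1$ phase with $\nabla(e^{id|x|^2/4})=\tfrac{id}{2}x\,e^{id|x|^2/4}\in L^2$ on the support after pairing with $x\varphi\in L^2$, so $\varphi_d\in H^1$, and $|x|\varphi_d=|x|\varphi\in L^2$).

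For the last assertion of (i), the case $K(x)=\mu|x|^{-b}$ with $\mu<0$, $\tfrac{4-2b}{N}\le\alpha<\tfrac{4-2b}{N-2}$, $E(\varphi)<0$: here $T_{\max}(\varphi_d)=\infty$ has just been proved, so it remains to show $T_{\min}(\varphi_d)<\infty$, i.e. blow-up in \emph{negative} time. I would use the pseudo-conformal (virial) identity: the phase $e^{id|x|^2/4}$ is exactly the initial value of the pseudo-conformal transform, and a standard computation (as in \cite[Ch.~6–7]{Cazenave}, adapted to the inhomogeneous term with $K=\mu|x|^{-b}$, $\mu<0$, which is the defocusing-sign-flipped, i.e.\ focusing, case and for which $x\cdot\nabla K=-bK$ makes the virial identity close favourably) shows that the function $f(t)=\||x|u(t)\|_{L^2}^2$, or rather $g(t)=\|(x+2it\nabla)u(t)\|_{L^2}^2$, satisfies $g'(t)=8t\bigl((2-b)(\alpha+2)-N\alpha\bigr)\cdot\tfrac{\mu}{\alpha+2}\int|x|^{-b}|u|^{\alpha+2}\le 0$ plus a term controlled by the energy; evaluating at $t=0$ using $\varphi_d$ one gets $g(0)$ and $g'(0^-)$ with signs forcing $g$ to reach $0$ in finite negative time, which by the uncertainty-type inequality $\|u\|_{L^2}^2\le\tfrac{2}{N}\||x|u\|_{L^2}\|\nabla u\|_{L^2}$ forces $\|\nabla u(t)\|_{L^2}\to\infty$, hence $T_{\min}(\varphi_d)<\infty$. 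The main obstacle I anticipate is making the negative-time virial argument rigorous with the singular weight $|x|^{-b}$ — one must justify the differentiation of $g$ and the sign of the nonlinear virial term using hypothesis $(K_4)$-type monotonicity ($x\cdot\nabla K\le -bK$, which holds with equality for $|x|^{-b}$) together with $E(\varphi_d)=E(\varphi)<0$ after checking that the conformal phase does not change the sign of the energy, i.e.\ $E(\varphi_d)=E(\varphi)+\tfrac{d^2}{16}\||x|\varphi\|_{L^2}^2+\tfrac{d}{4}\,\mathrm{Im}\int \bar\varphi\,x\cdot\nabla\varphi$, so one actually needs $d$ large \emph{and} a lower bound keeping this negative, which is where the precise threshold $d_0$ enters; this interplay between the smallness-in-$d$ needed for positive global existence and the largeness-in-$d$ needed for negative blow-up is the delicate point, resolved by noting the two conditions are compatible since $E(\varphi_d)\to-\infty$ is false — rather $E(\varphi_d)\to+\infty$, so one must instead invoke $E(\varphi)<0$ directly on the virial functional at a rescaled time, exactly as in the classical construction for $b=0$.
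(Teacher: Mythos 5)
Your reduction to the smallness condition \eqref{globalcondi} via the conformal identity is the same starting point as the paper's proof, but the key quantitative step is wrong. You claim that $aN(\tfrac12-\tfrac1\varrho)-2>0$ ``precisely when $\alpha>\alpha_0(b)$''; in fact $aN(\tfrac12-\tfrac1\varrho)=\frac{\alpha(N\alpha+2b)}{4-2b-\alpha(N-2)}$, and the condition $\alpha>\alpha_0(b)$ from \eqref{equationalpha0} is equivalent to $aN(\tfrac12-\tfrac1\varrho)>1$, i.e.\ to the exponent being $>-1$, while positivity of the exponent is equivalent to $\alpha>\frac{4-2b}{N}$ (equivalently $a>\varsigma$ with $\varsigma$ as in \eqref{varsigma}). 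Hence your bound $(1-d\tau)^{aN(\frac12-\frac1\varrho)-2}\le 1$ fails exactly in the new range $\alpha_0(b)<\alpha<\frac{4-2b}{N}$, which covers part of (i) and \emph{all} of part (ii): there the factor blows up as $\tau\to 1/d$. The paper instead uses only the uniform-in-time bounds $\|e^{i\tau\Delta}\varphi\|_{L^{\varrho,2}}\lesssim\|\varphi\|_{H^1}$ (case (i)) or $\|e^{i\tau\Delta}\varphi\|_{L^{\varrho,q}}\lesssim\|\hat\varphi\|_{L^{\varrho',q}}$ (case (ii)) and computes $\int_0^{1/d}(1-d\tau)^{2(a-\varsigma)/\varsigma}\,d\tau=\frac1d\int_0^1(1-s)^{2(a-\varsigma)/\varsigma}\,ds$, which is finite because $2(\varsigma-a)/\varsigma<1$ (this is what $\alpha>\alpha_0(b)$ buys) and tends to $0$ as $d\to\infty$; no splitting and no time-integrability of $\|e^{i\tau\Delta}\varphi\|_{L^{\varrho,q}}^a$ over $(0,\infty)$ is needed. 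This matters also because in (ii) you invoke the finiteness of $\int_0^\infty\|e^{i\tau\Delta}\varphi\|_{L^{\varrho,q}}^a\,d\tau$ as in Corollary \ref{exempleinitialdata}, but there only $\hat\varphi\in L^{\varrho',q}$ (not $\varphi\in L^{\varrho',q}$) is assumed, so no decay in $\tau$ is available and that integral is not known to converge.

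The second assertion of (i) (blow-up for negative time) is not actually proved in your proposal: the sketch ends with an acknowledged uncertainty about the sign of $E(\varphi_d)$ and a supposed tension between large $d$ and blow-up. The paper's argument removes this tension. One replaces $\varphi_d$ by $\overline{\varphi_d}$, so it suffices to show $T_{\max}(\varphi_{-d})<\infty$ for every $d>0$. By the virial identity of \cite{Farah}, $f(t)=\||x|u(t)\|_{L^2}^2$ satisfies $f''(t)=16E(\varphi_{-d})+4\mu\frac{N\alpha-4+2b}{\alpha+2}\int|x|^{-b}|u|^{\alpha+2}$, and since $\mu<0$ and $\alpha\ge\frac{4-2b}{N}$ one gets $0\le f(t)\le P(t):=f(0)+tf'(0)+8t^2E(\varphi_{-d})$. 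Using the identities of \cite{CW2} for $f'(0)$ and $E(\varphi_{-d})$ in terms of $E(\varphi)$, $F(\varphi)$ and $\||x|\varphi\|_{L^2}$, the $d$-dependent terms cancel at $t=1/d$ and $P(1/d)=\frac{8}{d^2}E(\varphi)<0$; hence $T_{\max}(\varphi_{-d})\le 1/d$, i.e.\ $T_{\min}(\varphi_d)<\infty$ for \emph{all} $d>0$, which is compatible with taking $d$ large for positive global existence. Your alternative route via $g(t)=\|(x+2it\nabla)u(t)\|_{L^2}^2$ and ``$g$ reaching $0$'' is neither carried out nor obviously closable with the singular weight; the quadratic-polynomial evaluation at $t=1/d$ is the missing idea.
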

\begin{proof} (i) The proof is similar to that of \cite[Corollary 2.5, p. 83]{CW3}. We give it for completeness.

 Let  $(\varsigma,\varrho)$ be the admissible pair, where $\varsigma=4(\alpha+2)/(N\alpha+2b).$   Using  the fact that dilations have no effect on the second index  in Lorentz spaces, we have that
\begin{eqnarray*}
\|e^{it\Delta}\varphi_d\|^a_{L^a(0,\infty; L^{\varrho,2})}&=&\int_0^{1\over d}(1-d\tau)^{2(a-\varsigma)/\varsigma}\|e^{i\tau\Delta}\varphi\|^a_{L^{\varrho,2}}d\tau\\
&\leq & C\int_0^{1\over d}(1-d\tau)^{2(a-\varsigma)/\varsigma}\|e^{i\tau\Delta}\varphi\|^a_{H^1(\R^N)}d\tau\\
&\leq & C\|\varphi\|^a_{H^1(\R^N)}\int_0^{1\over d}(1-d\tau)^{-2(\varsigma-a)/\varsigma}d\tau<\infty,
\end{eqnarray*}
since $2(\varsigma-a)/\varsigma<1.$ Hence $\|e^{it\Delta}\varphi_d\|^a_{L^a(0,\infty; L^{\varrho,2})}\to 0$ as $d\to \infty.$ We have $\varphi_d\in H^1,$ since $\varphi\in \Sigma.$ So the first statement of (i) follows by Theorem \ref{global}.

 The proof of the second statement of (i) is similar to that of \cite[Remark 2.6, p. 83]{CW3}. For reader convenience we give it here. By changing $\varphi_d$ into $\bar{\varphi_d}$, it suffices to show that if $E(\varphi)<0$ then $T_{\max}(\varphi_{-d})<\infty$ for all $d>0.$ Here $T_{\max}(\varphi_{-d})$ is the positive maximal existence time for the solution $u$ of \eqref{INLS} with initial data $\varphi_{-d}(x)=e^{-{id|x|^2\over 4}}\varphi(x).$ Define the positive function $f$ by
$$f(t)=\||\cdot|u(t)\|^2_{L^2(\R^N)}.$$
By \cite[Proposition 4.1, p. 201]{Farah}, $f\in C^2([0,T_{\max}(\varphi_{-d})))$ and we have
$$f'(t)=4Im\int_{\R^N}\bar{u}x.\nabla udx:=4F(u(t)),$$
$$f''(t)=16E(\varphi_{-d})+4\mu{(N\alpha-4+2b)\over \alpha+2}\int_{\R^N}|x|^{-b}|u|^{\alpha+2}dx.$$ Integrating the last inequality twice in time,
we get
$$f(t)= f(0)+tf'(0)+8t^2E(\varphi_{-d})+4\mu{(N\alpha-4+2b)\over \alpha+2}\int_0^t\int_0^s\int_{\R^N}|x|^{-b}|u|^{\alpha+2}(\sigma,x)dx d\sigma ds.$$ Since $\mu<0,\; \alpha\geq (4-2b)/N,$ we write
\begin{equation}
\label{ineqf}
f(t)\leq  f(0)+tf'(0)+8t^2E(\varphi_{-d}), \; t\in [0,T_{\max}(\varphi_{-d})).
\end{equation}
Define the polynomial $P$ by
$$P(t)=f(0)+tf'(0)+8E(\varphi_{-d})t^2, t\geq 0.$$ By inequality \eqref{ineqf}, we have
\begin{equation}
\label{compfP}
0\leq f(t)\leq P(t), \; t\in [0,T_{\max}(\varphi_{-d})).
\end{equation}
Using \cite[formula (4.3), (4.4)]{CW2}, we write
 $$f'(0)=4F(\varphi_{-d})=4F(\varphi)-{2d}\||\cdot|\varphi\|^2_{L^2(\R^N)},\; E(\varphi_{-d})=E(\varphi)-{d\over 2}F(\varphi)+{d^2\over 8}\||\cdot|\varphi\|^2_{L^2(\R^N)}.$$ Then we get
$$P(t)=\||\cdot|\varphi\|^2_{L^2(\R^N)}+4t\left(F(\varphi)-{d\over 2}\||\cdot|\varphi\|^2_{L^2(\R^N)}\right)+8t^2\left(E(\varphi)+{d^2\over 8}\||\cdot|\varphi\|^2_{L^2(\R^N)}-{d\over 2}F(\varphi)\right).$$
Clearly $P(1/d)={8\over d^2}E(\varphi)<0.$ Hence by inequality \eqref{compfP} $T_{\max}(\varphi_{-d})\leq 1/d<\infty.$ that is $T_{\min}(\varphi_{d})<\infty.$ Then $u$ blows up in finite time.  This completes the proof of Corollary \ref{occilating}.

(ii)  If $\hat{\varphi}\in L^{\varrho',q}$ then we have $\|e^{it\Delta}\varphi\|_{L^{\varrho,q}(\R^N)}\leq C\|\hat{\varphi}\|_{L^{\varrho',q}(\R^N)}.$ Hence the result follows as in the proof of (i).

\end{proof}
\begin{remark}
$\,$ {\rm
\begin{itemize}
\item[1)] The results of Corollaries \ref{exempleinitialdata}, \ref{occilating} are known for $b=0$. See \cite{CW3}.
\item[2)] The result of  Corollary \ref{occilating}  is not covered by Theorems 1.5 and 1.6 of \cite{Farah}.
\item[3)] If we choose $\varphi\in \Sigma$ such that $\|\nabla \varphi\|^{\beta}_{L^2}\|\varphi\|^{1-\beta}_{L^2}>E(Q)^{\beta}\|Q\|_{L^2}^{1-\beta}$, where
$\beta=\frac{N}{2}-\frac{2-b}{\alpha}$ then  the initial data $e^{is\Delta}\varphi$ does not satisfy the conditions 1.11 and 1.12 of \cite{Farah} for $s$ sufficiently large.
\item[4)] The result of Corollary \ref{exempleinitialdata} (ii)-(iii) seems to be interesting for $Im(K)<0$. In fact, in this case the $L^2$-norm of the solution is  increasing and hence it may blow-up.
\end{itemize}}
\end{remark}

\section{Scattering criteria}
In this section we prove the  scattering criteria of Theorems \ref{globalstritchartzu} and \ref{globalstritchartzuw}.
Before giving the proofs of the theorems we establish the following.
\begin{lemma}
\label{importantestimates}
Assume that $N\geq 4,\; 0\leq b<2$ and   $$0<\alpha<{4-2b\over N-2}.$$
Let $K$ be a function satisfying $(K_1)$ and $(K_2)$ or $(K_1)$ and  $(K_3).$ Set $\tilde{K}=|x|^{-b}$ for the first case and $\tilde{K}=K$, for the second case. Let $\varrho$ be given by \eqref{aP} and  $u$ be such that $|\tilde{K}|^{{1\over \alpha+2}}u\in L^{\alpha+2,\infty}(\R^N).$ Then there exists a constant $C>0$ such that the following hold.
\begin{itemize}
\item[(i)] For $v\in L^{\varrho,2}(\R^N)$ we have $$\|K|u|^\alpha v\|_{L^{\varrho',2}(\R^N)} \leq C \||\tilde{K}|^{{1\over \alpha+2}}u\|^{\alpha}_{L^{\alpha+2,\infty}(\R^N)} \|v\|_{L^{\varrho,2}(\R^N)}.$$
    \item[(ii)] For $\nabla v\in L^{\varrho,2}(\R^N)$ we have $$\|\nabla K |u|^\alpha v\|_{L^{\varrho',2}(\R^N)}\leq C \||\tilde{K}|^{{1\over \alpha+2}}u\|^{\alpha}_{L^{\alpha+2,\infty}(\R^N)} \|\nabla v\|_{L^{\varrho,2}(\R^N)}.$$
\end{itemize}
In particular if  in addition $u\in W_2^{1,\varrho}(\R^N),$ then there exists a constant $C>0$ such that
\begin{equation}
\label{estimatecercle}
\|K|u|^\alpha u\|_{W_2^{1,\varrho'}(\R^N)} \leq C \||\tilde{K}|^{{1\over \alpha+2}}u\|^{\alpha}_{L^{\alpha+2,\infty}(\R^N)}\|u\|_{W_2^{1,\varrho}(\R^N)}.
\end{equation}
\end{lemma}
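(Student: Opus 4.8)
The plan is to prove both estimates (i) and (ii) by the same mechanism: a Hölder inequality in Lorentz spaces that splits off $\alpha$ copies of $|\tilde K|^{1/(\alpha+2)}u$ measured in $L^{\alpha+2,\infty}$ and absorbs the residual power of the potential against the remaining factor. First I would fix the exponents. Writing $K|u|^\alpha v = \big(|\tilde K|^{1/(\alpha+2)}u\big)^{\alpha}\cdot\big(K|\tilde K|^{-\alpha/(\alpha+2)}\big)\cdot v$, the middle factor is under $(K_1)$ dominated pointwise by $|x|^{-b+b\alpha/(\alpha+2)} = |x|^{-2b/(\alpha+2)}$ (and equal to $|\tilde K|^{2/(\alpha+2)}$ in the $(K_3)$ case, again $\lesssim |x|^{-2b/(\alpha+2)}$ by $(K_1)$), which lies in the weak-Lorentz space $L^{N(\alpha+2)/(2b),\infty}$. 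So the Hölder exponents to check are $\tfrac{1}{\varrho'} = \tfrac{\alpha}{\alpha+2} + \tfrac{2b}{N(\alpha+2)} + \tfrac{1}{\varrho}$ for the first (Lebesgue) index and, for the second index, $\alpha\cdot\tfrac1\infty + \tfrac1\infty + \tfrac12 \le \tfrac12$, so the target second index $2$ is exactly met. The first-index identity is a direct computation from $\varrho = N(\alpha+2)/(N-b)$: indeed $\tfrac{N-b}{N(\alpha+2)}\cdot(\alpha+2) = \tfrac{N-b}{N} = 1-\tfrac{b}{N}$, while $\tfrac{\alpha}{\alpha+2}+\tfrac{2b}{N(\alpha+2)}+\tfrac{N-b}{N(\alpha+2)} = \tfrac{N\alpha + 2b + N - b}{N(\alpha+2)} = \tfrac{N(\alpha+1)+b}{N(\alpha+2)} = 1 - \tfrac{\alpha+2-(\alpha+1)}{\alpha+2}+\tfrac{b}{N(\alpha+2)}$; I would just verify numerically that this equals $1-\tfrac1\varrho = 1 - \tfrac{N-b}{N(\alpha+2)}$, which it does. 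The constraint $0<\alpha<(4-2b)/(N-2)$ together with $N\ge 4$, $0\le b<2$ is what guarantees all the Lorentz exponents appearing are in the legal ranges ($1<\varrho<\infty$, $N(\alpha+2)/(2b)\in(1,\infty)$, $\alpha+2\in(2,\infty)$).

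For part (i), having fixed exponents, I would invoke the Hölder inequality in Lorentz spaces (with the three factors in $L^{(\alpha+2)/\alpha,\infty}$ — obtained by the Lorentz power rule $\||f|^\alpha\|_{L^{(\alpha+2)/\alpha,\infty}} = \| f\|_{L^{\alpha+2,\infty}}^\alpha$ — in $L^{N(\alpha+2)/(2b),\infty}$, and in $L^{\varrho,2}$) to conclude directly, noting that $|x|^{-2b/(\alpha+2)}\in L^{N(\alpha+2)/(2b),\infty}(\R^N)$ has a finite norm that is absorbed into $C$. For part (ii) the only change is that the potential factor is now $\nabla K\,|\tilde K|^{-\alpha/(\alpha+2)}$: under $(K_2)$ this is $\lesssim |x|^{-b-1+b\alpha/(\alpha+2)} = |x|^{-1-2b/(\alpha+2)}$, and under $(K_3)$ it is $\lesssim |x|^{-1-2b/(\alpha+2)}$ as well — this is precisely why $(K_3)$ is stated with that exponent. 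That extra $|x|^{-1}$ must be compensated by one Sobolev derivative on $v$, so I would apply the Sobolev–Lorentz embedding $\dot W^{1,\varrho}_2 \hookrightarrow L^{N\varrho/(N-\varrho),2}$ (valid since $\varrho<N$, which holds because $N\ge4$ and $\alpha<(4-2b)/(N-2)$), replacing the $L^{\varrho,2}$ factor by $L^{N\varrho/(N-\varrho),2}$ controlled by $\|\nabla v\|_{L^{\varrho,2}}$, and re-checking the Hölder balance: the potential now sits in $L^{N(\alpha+2)/(2b+\alpha+2),\infty}$, i.e. with extra $+1$ in the Lebesgue reciprocal, exactly matching the extra $-\tfrac1N$ in $\tfrac{N-\varrho}{N\varrho} = \tfrac1\varrho-\tfrac1N$ from the embedding. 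The second-index bookkeeping is unchanged and still closes at $2$.

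Finally, for the displayed estimate \eqref{estimatecercle} I would expand $\nabla\big(K|u|^\alpha u\big)$ by the Leibniz rule into three terms — $(\nabla K)|u|^\alpha u$, a constant times $K|u|^\alpha\nabla u$, and a constant times $K|u|^{\alpha-2}u^2\nabla\bar u$ — exactly as in the computation in the proof of Lemma \ref{nonlinearnonadmissible}. The first term is handled by (ii) with $v = u$; the second and third are handled by (i) with $v$ replaced by $\nabla u$ (noting $|u|^{\alpha-2}u^2$ has the same modulus as $|u|^\alpha$, so the pointwise bounds are identical). Summing and adding the $L^{\varrho'}$-bound on $K|u|^\alpha u$ itself (again from (i) with $v=u$) yields control of the full $W^{1,\varrho'}_2$ norm by $\||\tilde K|^{1/(\alpha+2)}u\|_{L^{\alpha+2,\infty}}^\alpha\|u\|_{W^{1,\varrho}_2}$.

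The main obstacle, I expect, is purely bookkeeping: verifying that every Lorentz exponent that appears — in particular the weak space of the potential weight and the Sobolev-shifted space for $\nabla v$ — is strictly between $1$ and $\infty$ (so that Hölder and the Sobolev–Lorentz embedding are legitimate) and that the second Lorentz indices compose correctly to land at $2$ rather than something larger. There is no analytic difficulty beyond the dispersive/Sobolev tools already recalled in Section 2; the subtlety is that the constraints $N\ge 4$, $0\le b<2$, $0<\alpha<(4-2b)/(N-2)$ are each used, and one must make sure none of them is actually violated at the endpoints (which is why the statement excludes $N=2,3$ and $b=2$).
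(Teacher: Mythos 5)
Your proposal is correct and follows essentially the same route as the paper's proof: the same pointwise splitting $K=\left(K|\tilde K|^{-\alpha/(\alpha+2)}\right)|\tilde K|^{\alpha/(\alpha+2)}$ with the weight $|x|^{-2b/(\alpha+2)}$ (resp. $|x|^{-1-2b/(\alpha+2)}$ for $\nabla K$) placed in a weak Lorentz space, Hölder in Lorentz spaces with the $|\tilde K|^{1/(\alpha+2)}u$ factor in $L^{\alpha+2,\infty}$, the Sobolev--Lorentz embedding $\dot W^{1,\varrho}_2\hookrightarrow L^{N\varrho/(N-\varrho),2}$ (using $\varrho<N$) for part (ii), and the Leibniz expansion reducing \eqref{estimatecercle} to (i) and (ii) with $v=u$ or $v=\nabla u$. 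The exponent identities you check coincide with the paper's choices $b_2=2b/(\alpha+2)$, $p_1=(\alpha+2)/\alpha$, $1/p_2=1/\varrho-1/N$, so no gap.
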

\begin{proof} Let us define the following numbers
$$ b_2={2 b\over \alpha+2},\; p_1={\alpha+2\over \alpha},\;  {1\over p_2}={1\over \varrho}-{1\over N}.$$

(i) Using the H\"older inequality, we have
\begin{eqnarray}
\|K|u|^\alpha v\|_{L^{\varrho',2}(\R^N)}  &\lesssim & \||\cdot|^{-{b_2}}\|_{L^{{N\over b_2},\infty}(\R^N)}\||\tilde{K}|^{{\frac{\alpha}{\alpha+2}}}|u|^\alpha\|_{L^{p_1,\infty}(\R^N)} \|v\|_{L^{\rho,2}(\R^N)}\nonumber\\
&\lesssim & \||\tilde{K}|^{{1\over \alpha+2}}u\|^{\alpha}_{L^{\alpha+2,\infty}(\R^N)} \|v\|_{L^{\varrho,2}(\R^N)}\label{o}.
\end{eqnarray}

(ii) Using the Sobolev embedding, we have
\begin{eqnarray}
\|\nabla K |u|^\alpha v\|_{L^{\varrho',2}(\R^N)} &\lesssim &\||\cdot|^{-b_2-1}|\tilde{K}|^{\frac{\alpha}{\alpha+2}}|u|^\alpha v\|_{L^{\varrho',2}(\R^N)} \nonumber\\ &\lesssim & \||\cdot|^{-{b_2-1}}\|_{L^{{N\over b_2+1},\infty}(\R^N)}\||\tilde{K}|^{\frac{\alpha}{\alpha+2}}|u|^\alpha\|_{L^{p_1,\infty}(\R^N)} \|v\|_{L^{p_2,2}(\R^N)}\nonumber\\ &\lesssim & \||\tilde{K}|^{\frac{1}{\alpha+2}}u\|^{\alpha}_{L^{\alpha+2,\infty}(\R^N)} \|\nabla v\|_{L^{\varrho,2}(\R^N)}\label{oo}.
\end{eqnarray}

For the last estimate, we write
$$\|K|u|^\alpha u\|_{W_2^{1,\varrho'}(\R^N)} \lesssim  \|K|u|^\alpha u\|_{L^{\varrho',2}(\R^N)} +\|\nabla K |u|^\alpha u\|_{L^{\varrho',2}(\R^N)}+\|K|u|^\alpha \nabla u\|_{L^{\varrho',2}(\R^N)}.$$
Then by replacing $v$ by $u$ or $\nabla u$ in the above estimates, we get the desired inequality. This completes the proof of the lemma.
\end{proof}

We now turn to the proof of Theorem \ref{globalstritchartzuw}.
\begin{proof}[Proof of Theorem \ref{globalstritchartzuw}]
The proof uses some argument of \cite[Proposition 2.3, p. 81 and  Proposition 2.7, p. 83]{CW3}.  We divide the proof of the lemma into five steps.

{\bf Step 1.}  In this step we show that $u\in L^r([0,+\infty),W^{1,p}_2(\R^N)),$ for any admissible pair $(r,p).$ By \cite[Theorem 1]{AT-1}, we know that $u\in L^{r}(0,T;W_2^{1,p}(\R^N)),$ for any admissible pair $(r,p),$ and every $T<\infty.$  Let $0<T<t.$ We write
$$ u(t)=e^{i(t-T)\Delta}u(T)-i\int_T^te^{i(t-s)\Delta}(K|u|^\alpha u(s))ds.$$
Let $\varsigma$ be given by \eqref{varsigma}. By the Strichartz estimates with the pair $(\varsigma,\varrho)$, we have
$$\|u\|_{L^{\varsigma}([T,t],W^{1,\varrho}_2(\R^N))}\lesssim \|u(T)\|_{H^1(\R^N)}+\| K|u|^\alpha u\|_{L^{\varsigma'}([T,t],W^{1,\varrho'}_2(\R^N))}.$$ First we notice that since $\alpha<(4-2b)/(N-2)$ then $\varrho<(2N)/(N-2).$ Since $u\in H^1(\R^N)$ then  by Sobolev injection we have $u\in L^{\varrho,2}(\R^N).$ By the  hypothesis $(K_1)$ and using the H\"older inequality, we have
 \begin{eqnarray*}
 \|\tilde{K}^{{1\over \alpha+2}}u\|_{L^{{\alpha+2},\infty}(\R^N)} &\lesssim & \||\cdot|^{-{b\over \alpha+2}}u\|_{L^{{\alpha+2},\infty}(\R^N)} \\ &\lesssim & \|u\|_{L^{\varrho,\infty}(\R^N)}\\ &\lesssim & \|u\|_{L^{\varrho,2}(\R^N)}\\ &\lesssim & \|u\|_{H^1(\R^N)}.
 \end{eqnarray*}
 Hence $\tilde{K}^{{1\over \alpha+2}}u\in L^{\alpha+2,\infty}(\R^N)$.
By Lemma \ref{importantestimates}, we have
\begin{eqnarray*}
\|K|u|^\alpha u\|_{W^{1,\varrho'}_2(\R^N)}&\leq &  C \|\tilde{K}^{{1\over \alpha+2}}u\|^\alpha_{L^{\alpha+2,\infty}(\R^N)}\|u\|_{W_2^{1,\varrho}(\R^N)}.
\end{eqnarray*}
Hence, by the H\"older inequality in time and for $a$ given by \eqref{aP}, we have
\begin{eqnarray*}
\|K |u|^\alpha u\|_{L^{\varsigma'}([T,t],W^{1,\varrho'}_2(\R^N))}&\leq &  C\|\|\tilde{K}^{{1\over \alpha+2}}u\|^\alpha_{L^{\alpha+2,\infty}(\R^N)}\|_{L^{a\over \alpha}([T,t])}\|u\|_{L^{\varsigma}([T,t],W_2^{1,\varrho}(\R^N))}\\ &\leq &  C\|\tilde{K}^{{1\over \alpha+2}}u\|^{\alpha}_{L^a([T,t],L^{\alpha+2,\infty}(\R^N))}\|u\|_{L^{\varsigma}([T,t],W_2^{1,\varrho}(\R^N))}.
\end{eqnarray*}
We may choose $T$ sufficiently large so that  $C\|\tilde{K}^{{1\over \alpha+2}}u\|^{\alpha}_{L^a([T,t],L^{\alpha+2,\infty}(\R^N))}\leq 1/2$ to get
$$\|u\|_{L^{\varsigma}([T,t],W^{1,\varrho}_2(\R^N))}\lesssim C(\|u(T)\|_{H^1(\R^N)})+{1\over 2}\|u\|_{L^{\varsigma}([T,t],W_2^{1,\varrho}(\R^N)},$$
hence
$\|u\|_{L^{\varsigma}([T,t],W^{1,\varrho}_2(\R^N))}\leq 2C(\|u(T)\|_{H^1(\R^N)}).$
Letting $t\to \infty,$ we get
$$\|u\|_{L^{\varsigma}([T,\infty),W^{1,\varrho}_2(\R^N))}\leq 2C(\|u(T)\|_{H^1(\R^N)}).$$
We know by the local existence \cite[Theorem 1]{AT-1} that $u\in L^{\varsigma}([0,T),W^{1,\varrho}_2(\R^N)),$ so $u\in L^{\varsigma}([0,\infty),W^{1,\varrho}_2(\R^N)).$ The result for any admissible pair follows by Strichartz's estimates.

{\bf Step 2.} We show here that $u$ scatters in $H^1(\R^N).$ Write
$$u(t)=e^{it\Delta}u_0-i\int_0^te^{i(t-s)\Delta}(K|u|^\alpha u(s))ds.$$
Set
$$z(t):=e^{-it\Delta}u(t)=u_0-i\int_0^te^{-is\Delta}(K|u|^\alpha u(s))ds.$$
For $0<t<\tau,$ we have
$$z(t)-z(\tau)=-i e^{-it\Delta}\int_\tau^te^{i(t-s)\Delta}(K|u|^\alpha u(s))ds.$$
Let $(\varsigma,\varrho)$ be the admissible pair given by \eqref{aP} and \eqref{varsigma}. By the Strichartz estimates,  we have
\begin{eqnarray*}
\|z(t)-z(\tau)\|_{L^\infty((t,\tau),H^1(\R^N))}&=&\|e^{it\Delta}[z(t)-z(\tau)]\|_{L^\infty((t,\tau),H^1(\R^N))}
\\ &=& \left\|\int_\tau^te^{i(t-s)\Delta}(K|u|^\alpha u(s))ds\right\|_{L^\infty((t,\tau),H^1(\R^N))}\\ &\leq &
\|K|u|^\alpha u\|_{L^{\varsigma'}((t,\tau),W^{1,\varrho'}_2)}.
\end{eqnarray*}

Using \eqref{estimatecercle} in Lemma \ref{importantestimates}, the above step, H\"older's inequality in time, we get

\begin{eqnarray*}
\|z(t)-z(\tau)\|_{L^\infty((t,\tau),H^1(\R^N))} &\leq &
\left\|\|\tilde{K}^{{1\over \alpha+2}}u\|^\alpha_{L^{\alpha+2,\infty}(\R^N)}\right\|_{L^{a\over \alpha}((t,\tau))}\|u\|_{L^{\varsigma}((t,\tau),W^{1,\varrho}_2)}\\ &\leq &
\|\tilde{K}^{{1\over \alpha+2}}u\|^\alpha_{L^{a}((t,\tau),L^{\alpha+2,\infty}(\R^N))}\|u\|_{L^{\varsigma}(\R,W^{1,\varrho}_2)} \to 0,
\end{eqnarray*}
as $t,\; \tau\to \infty.$  Here $a$ is given by \eqref{aP}.

Thus $\|z(t)-z(\tau)\|_{H^1(\R^N)}\to 0$ as $t,\; \tau\to \infty.$ Then there exists $\varphi^+\in H^1(\R^N),$ such that $e^{-it\Delta}u(t)\to \varphi^+$ as $t\to \infty,$ in $H^1(\R^N).$

{\bf Step 3.}  In this step we show that $w:=(x+2it\nabla)u\in L^r_{loc}(\R,L^{p,2}(\R^N))\cap C(\R,L^{2}(\R^N)),$ for any admissible pair $(r,p).$ We use similar argument of \cite{Tao-Visan-Zhang}. See also \cite{Dinh1}. Let $T>0$ and $I=[0,T].$ We write $I=\bigcup_{j\; {\mbox{finite}}}I_j,$ where $I_j=[t_j,t_{j+1}],$  $|t_{j+1}-t_j|=|I_j|<\varepsilon,$ with $\varepsilon>0$ to be fixed later. We write
$$w(t)=e^{i(t-t_j)\Delta}w(t_j)-i\int_{t_j}^te^{i(t-s)\Delta}\left((x+2is\nabla)(K|u|^\alpha u(s)\right)ds.$$
We recall that $|(x+2is\nabla)(K|u|^\alpha u(s))|=2|s|\left|\nabla\left(e^{-i{|x|^2\over 4s}}K|u|^\alpha u(s)\right)\right|=2|s||\nabla\left(K|v|^\alpha v(s)\right)|,$ where $v(s,x)=e^{-i{|x|^2\over 4s}}u(s,x)$ as defined in \eqref{definitionv}. Also $2|t||\nabla v|=|w|$ by \eqref{definitionviden1}.

Let the following particular admissible pair $(\gamma,\rho),$  given by
\begin{equation}
\label{partAdmiss1}
\gamma=\frac{4(\alpha+2)}{\alpha(N-2)+2b},\quad  \quad \rho=\frac{N(\alpha+2)}{N+\alpha -b}.
\end{equation}
Using Strichartz's estimates, we have
\begin{eqnarray*}
\|w\|_{L^r(I_j,L^{p,2}(\R^N))}&\lesssim & \|w(t_j)\|_{L^{2}(\R^N)}+\|(x+2it\nabla)(K|u|^\alpha u(t)\|_{L^{\gamma'}(I_j,L^{\rho',2}(\R^N))}\\
&\lesssim & \|w(t_j)\|_{L^{2}(\R^N)}+\||t|\nabla(K|v|^\alpha v(t)\|_{L^{\gamma'}(I_j,L^{\rho',2}(\R^N))}\\ &\lesssim & \|w(t_j)\|_{L^{2}(\R^N)}+\||x|^{-b}|u|^\alpha w(t)\|_{L^{\gamma'}(I_j,L^{\rho',2}(\R^N))}\\&&+\||t||x|^{-b-1}|u|^\alpha |v|\|_{L^{\gamma'}(I_j,L^{\rho',2}(\R^N))},
\end{eqnarray*}
where the constants here are independent of $j.$

Using similar calculations as in \cite[Lemma 3.1]{AT-1}, we have
$$\||x|^{-b}|u|^\alpha w\|_{L^{\rho',2}(\R^N)}\lesssim \|\nabla u\|^\alpha_{L^{\rho,2}(\R^N)}\|w\|_{L^{\rho,2}(\R^N)}$$
and
$$\||x|^{-b-1}|u|^\alpha v\|_{L^{\rho',2}(\R^N)}\lesssim \|\nabla u\|^\alpha_{L^{\rho,2}(\R^N)}\|\nabla v\|_{L^{\rho,2}(\R^N)},$$
Hence since $2|t||\nabla v|=|w|,$ we have
$$\||t||x|^{-b-1}|u|^\alpha |v|\|_{L^{\rho',2}(\R^N)}\lesssim \|\nabla u\|^\alpha_{L^{\rho,2}(\R^N)}\|w\|_{L^{\rho,2}(\R^N)}.$$
We then get
\begin{eqnarray*}
\|w\|_{L^r(I_j,L^{p,2}(\R^N))}&\lesssim & \|w(t_j)\|_{L^{2}(\R^N)}+\|\|\nabla u\|^\alpha_{L^{\rho,2}(\R^N)}\|w\|_{L^{\rho,2}(\R^N)}\|_{L^{\gamma'}(I_j)}\\ &\lesssim & \|w(t_j)\|_{L^{2}(\R^N)}+\varepsilon^\delta\|\nabla u\|^\alpha_{L^{\gamma}(I_j,L^{\rho,2}(\R^N))}\|w\|_{L^{\gamma}(I_j,L^{\rho,2}(\R^N))}\\ &\lesssim & \|w(t_j)\|_{L^{2}(\R^N)}+\varepsilon^\delta\|\nabla u\|^\alpha_{S(\R,L^2)}\|w\|_{S(I_j,L^2)},
\end{eqnarray*}
where $\delta=1-{b\over 2}-{(N-2)\alpha\over 4}>0,$ and
$$\|\cdot\|_{S(I,L^2)}=\sup_{\{(r,p) \mbox{admissible pairs}\}} \|\cdot\|_{L^r(I,L^{p,2})}.$$
We now choose $\varepsilon>0$ such that $\varepsilon^\delta\|\nabla u\|^\alpha_{S(\R,L^2)}\leq 1/2,$ then get
$$\|w\|_{S(I_j,L^2)}\lesssim \|w(t_j)\|_{L^{2}(\R^N)}.$$
In particular $\|w(t_1)\|_{L^2}\lesssim \|w(t_0)\|_{L^{2}(\R^N)}$ and so on, we obtain $\|w(t_j)\|_{L^2}\lesssim \|w(t_0)\|_{L^{2}(\R^N)}$ for any $j.$ Hence
$\|w\|_{S(I_j,L^2)}\lesssim \|w(0)\|_{L^{2}(\R^N)},$ for any $j.$ This gives that $\|w\|_{L^r(I,L^{p,2}(\R^N))}\lesssim \|w(0)\|_{L^{2}(\R^N)}$. The continuity of $w$ follows by similar calculations as above; this completes the proof.

{\bf Step 4.}   In this step we show that $w\in L^r(\R,L^{p,2})$ for any admissible pair $(r,p).$ Let $0<T<t.$ We write
\begin{eqnarray*}
w(t)&=&e^{it\Delta}(xu_0)-i\int_0^te^{i(t-s)\Delta}\left((x+2is\nabla)K|u|^\alpha u(s)\right)ds\\
&=&w(T)-i\mu \int_T^te^{i(t-s)\Delta}\left((x+2is\nabla)K|u|^\alpha u(s)\right)ds.
\end{eqnarray*}
We have,
\begin{eqnarray*}
|(x+2is\nabla)K|u|^\alpha u(s)|&\lesssim& |K(x+2is\nabla)|u|^\alpha u(s)|+|s\nabla |K||u|^\alpha u(s)|\\ &\lesssim&
|K||s||\nabla\left(e^{-i{|x|^2\over 2}}|u|^\alpha u(s)\right)|+|\nabla K||s||u|^{\alpha+1}\\ &\lesssim&
|K||s||v|^\alpha|\nabla v(s)|+|s||\nabla K||u|^{\alpha}|v|\\ &\lesssim&
|K||u(s)|^\alpha|w(s)|+|s||\nabla K||u(s)|^{\alpha}|v(s)|.
\end{eqnarray*}
By Lemma \ref{importantestimates} (i), we have
$$\|K|u(s)|^\alpha|w(s)|\|_{L^{\varrho',2}(\R^N)}\leq C \||\tilde{K}|^{{1\over \alpha+2}}u\|^\alpha_{L^{\alpha+2,\infty}(\R^N)}\|w\|_{L^{\varrho,2}(\R^N)},$$
and by Lemma \ref{importantestimates} (ii), we have
\begin{eqnarray*}
\|\nabla K |u(s)|^\alpha|s||v(s)|\|_{L^{\varrho',2}(\R^N)}&\leq & C \||\tilde{K}|^{{1\over \alpha+2}}u\|^\alpha_{L^{\alpha+2,\infty}(\R^N)}\||s|\nabla v\|_{L^{\varrho,2}(\R^N)}\\ &\leq & C \||\tilde{K}|^{{1\over \alpha+2}}u\|^\alpha_{L^{\alpha+2,\infty}(\R^N)}\|w\|_{L^{\varrho,2}(\R^N)}.
\end{eqnarray*}
Hence,
\begin{equation}\label{NAH2o}
\|(x+2is\nabla)K|u|^\alpha u(s)\|_{L^{\varrho',2}(\R^N)}\lesssim \||\tilde{K}|^{{1\over \alpha+2}}u\|^\alpha_{L^{\alpha+2,\infty}(\R^N)}\|w\|_{L^{\varrho,2}(\R^N)}.\end{equation}
Now, by the  Step 2, the Strichartz estimates and Theorem \ref{Decay Estimates}, we have
\begin{eqnarray*}
\|w\|_{L^{\varsigma}((T,t),L^{\varrho,2}(\R^N))}&\lesssim & \|w(T)\|_{L^{2}(\R^N))}+\|(x+2is\nabla)K|u|^\alpha u\|_{L^{\varsigma'}((T,t),L^{\varrho',2}(\R^N))} \\ &\lesssim &\|w(0)\|_{L^{2}(\R^N))}+\|\||\tilde{K}|^{{1\over \alpha+2}}u\|^\alpha_{L^{\alpha+2,\infty}(\R^N)}\|w\|_{L^{\varrho,2}(\R^N)}\|_{L^{\varsigma'}((T,t))}\\ &\lesssim & \|xu(0)\|_{L^{2}(\R^N))}+\|\||\tilde{K}|^{{1\over \alpha+2}}u\|^\alpha_{L^{\alpha+2,\infty}(\R^N)}\|_{L^{a/\alpha}((T,t))}\|w\|_{L^{\varsigma}((T,t),L^{\varrho,2}(\R^N)}\\ &\lesssim & \|xu(0)\|_{L^{2}(\R^N))}+\|\||\tilde{K}|^{{1\over \alpha+2}}u\|_{L^{\alpha+2,\infty}(\R^N)}\|^\alpha_{L^{a}((T,t))}\|w\|_{L^{\varsigma}((0,t),L^{\varrho,2}(\R^N)}.
\end{eqnarray*}
One concludes as in step 1 using the conclusion of Step 2. This completes the proof of Proposition \ref{globalstritchartzuw}.

{\bf Step 5.} Here we show that $u$ scatters in $\Sigma.$ We have
$$x(z(t)-z(\tau))=-i e^{-it\Delta}\int_\tau^te^{i(t-s)\Delta}(x+2is\nabla)(K|u|^\alpha u(s))ds.$$

Using Strichartz estimates in Lorentz spaces, we get
\begin{eqnarray*}
\|x(z(t)-z(\tau))\|_{L^\infty((t,\tau),L^2(\R^N))}&=&\|e^{it\Delta}[x(z(t)-z(\tau))]\|_{L^\infty((t,\tau),L^2(\R^N))}
\\ &=& \left\|\int_\tau^te^{i(t-s)\Delta}(x+2is\nabla)(K|u|^\alpha u(s))ds\right\|_{L^\infty((t,\tau),L^2(\R^N))}\\ &\leq &
C\|(x+2is\nabla)K|u|^\alpha u\|_{L^{\varsigma'}((t,\tau),L^{\varrho',2})}
\end{eqnarray*}

Using  Step 4, \eqref{NAH2o} and H\"older's inequality in time, we get

\begin{eqnarray*}
\|x(z(t)-z(\tau))\|_{L^\infty((t,\tau),L^2(\R^N))} &\leq &
\||\tilde{K}|^{{1\over \alpha+2}}u\|^\alpha_{L^{\alpha+2,\infty}(\R^N)}\|_{L^{a\over \alpha}((t,\tau))}\|w\|_{L^{\varsigma}((t,\tau),L^{\varrho,2})}\\ &\leq &
\|\tilde{K}|^{{1\over \alpha+2}}u\|_{L^{\alpha+2,\infty}(\R^N)}\|_{L^{a}((t,\tau))}\|w\|_{L^{\varsigma}(\R,L^{\varrho, 2})} \to 0,
\end{eqnarray*}
as $t,\; \tau\to \infty.$  Thus $\|x(z(t)-z(\tau))\|_{L^2(\R^N)}\to 0$ as $t,\; \tau\to \infty.$ Then  $\|x(e^{-it\Delta}u(t)-\varphi^+)\|_{L^2(\R^N)}\to 0$ as $t\to \infty.$

This completes the proof of the Theorem \ref{globalstritchartzuw}.
\end{proof}

\begin{proof}[Proof of Theorem \ref{globalstritchartzu}] It is similar to that  of Step 1 and Step 2 in the proof of Theorem \ref{globalstritchartzuw}. So it is omitted.

\end{proof}

\section{Decay estimates in a weighted $L^2$ space}
In this section we prove Theorem \ref{Decay Estimates}. For this, we first recall the following well known facts. See for example \cite{Dinh,Guzman}. Since $K\geq 0$,  the solution
of \eqref{INLS} with initial data $u_0\in H^1(\R^N)$ is global. Furthermore, the masse
\begin{equation}
\label{masse}
M(u(t))=\|u(t)\|_{L^2(\R^N)}^2
\end{equation}
and the energy
\begin{equation}
\label{energy1}
E(u)(t)={1\over 2}\|\nabla u(t)\|_{L^2(\R^N)}^2+{1\over \alpha+2}\left\|K|u(t)|^{\alpha+2}\right\|_{L^{1}(\R^N)}
\end{equation}
are conserved. The fact that $u\in C(\R, \Sigma)$ follows as in \cite{Farah,Cazenave,Dinh}.

We put
\begin{equation}
\label{energy2}
G(t)= {1\over \alpha+2}\left\| K|u(t)|^{\alpha+2}\right\|_{L^{1}(\R^N)}.
\end{equation}
We have the following identity  (\cite[Lemma 4.4, p. 15]{Dinh}),
\begin{eqnarray}
|(x+2it\nabla)u(t)\|_{L^2(\R^N)}^2+8t^2G(t)& =  &\|xu_0\|^2_{L^2(\R^N)}+4(4-N\alpha)\int_0^tsG(s)ds \nonumber \\& & + \frac{8}{\alpha+2}\int_0^t s\int_{\R^N} x.\nabla K(x)|u(s)|^{\alpha+2}dxds \label{identite1}.
\end{eqnarray}
Using hypothesis $(ii)$, we obtain
\begin{equation}
|(x+2it\nabla)u(t)\|_{L^2(\R^N)}^2+8t^2G(t)\leq   \|xu_0\|^2_{L^2(\R^N)}+4(4-2b-N\alpha)\int_0^tsG(s)ds.\label{identite12}
\end{equation}
Set
\begin{equation}
\label{definitionv}
v(t,x)=e^{-{i|x|^2\over 4t}}u(t,x).
\end{equation}
Then
\begin{equation}
\label{definitionviden1}
2ite^{{i|x|^2\over 4t}}\nabla v(t,x)=(x+2it\nabla)u(t,x),
\end{equation}
also
\begin{equation}
\label{definitionviden2}
\|v(t)\|_{L^{p,q}(\R^N)}=\|u(t)\|_{L^{p,q}(\R^N)},\;
2|t|\|\nabla v(t)\|_{L^{p,q}(\R^N)}=\|(x+2it\nabla)u(t)\|_{L^{p,q}(\R^N)}.\end{equation}
In particular,
$$\|(x+2it\nabla)u(t)\|^2_{L^2(\R^N)}=4t^2\|\nabla v(t)\|^2_{L^2(\R^N)}.$$
Then identity \eqref{energy1}  and inequality \eqref{identite12} lead to
\begin{eqnarray}
\nonumber
8t^2E(v)(t)&=& 4t^2\|\nabla v(t)\|^2_{L^2(\R^N)}+8t^2G(t)\\
\label{identite1new}
&\leq &\|xu_0\|_{L^2(\R^N)}^2+4(4-2b-N\alpha)\int_0^tsG(s)ds.
\end{eqnarray}

We now give the proof of Theorem \ref{Decay Estimates}.
\begin{proof}[Proof of Theorem \ref{Decay Estimates}]
$\,$\\
(i) Since $4-2b-N\alpha\leq 0,$ we deduce that
$$t^2\|\nabla v(t)\|^2_{L^2(\R^N)}\leq 2t^2E(v)(t)\leq {1\over 4}\|xu_0\|^2_{L^2(\R^N)}.$$
That is
\begin{equation}
\label{estimatinNablav}
\|\nabla v(t)\|_{L^2(\R^N)}\leq {1\over 2}\|xu_0\|_{L^2(\R^N)}|t|^{-1},\; t\not=0.
\end{equation}
Recall now that
$\|u_0\|_{L^2(\R^N)}=\|u(t)\|_{L^2(\R^N)}=\|v(t)\|_{L^2(\R^N)},$ for all $t\in \R.$ Then for $2<{\tilde{p}}<2N/(N-2),$ using the Gagiliardo-Nirenberg, we get
\begin{eqnarray*}
\|u(t)\|_{L^{{\tilde{p}},2}(\R^N)}&=&\|v(t)\|_{L^{{\tilde{p}},2}(\R^N)}\\ &\leq & \|\nabla v(t)\|_{L^2(\R^N)}^{N\left({1\over 2}-{1\over {\tilde{p}}}\right)}
\|v(t)\|_{L^2(\R^N)}^{1-N\left({1\over 2}-{1\over {\tilde{p}}}\right)}\\ &\leq & \left({1\over 2}
\|xu_0\|_{L^2(\R^N)}\right)^{N\left({1\over 2}-{1\over {\tilde{p}}}\right)}
\|u_0\|_{L^2(\R^N)}^{1-N\left({1\over 2}-{1\over {\tilde{p}}}\right)}
|t|^{-N\left({1\over 2}-{1\over {\tilde{p}}}\right)}.
\end{eqnarray*}
Hence \eqref{estimatelpq} follows. The estimate for ${\tilde{p}}=2N/(N-2)$  follows by the Sobolev embedding and \eqref{estimatinNablav}. The case $\tilde{p}=2$ is trivial.

(ii) We now suppose that $\alpha<(4-2b)/N,$ then $4-2b-N\alpha>0.$ From \eqref{identite1new} we write
$$8t^2E(v)(t)\leq \|xu_0\|_{L^2(\R^N)}^2+4(4-2b-N\alpha)\int_0^1sG(s)ds+4(4-2b-N\alpha)\int_1^tsG(s)ds.$$
Hence $$g(t):=t^2G(t)\leq C(u_0)+{(4-2b-N\alpha)\over 2}\int_1^t{1\over s}g(s)ds.$$
So, by Gronwall's inequality, we get
$g(t)\leq Ct^{-{N\alpha-4+2b\over 2}},$ for $t>1$
that is
\begin{equation}
\label{estimationG}
G(t)\leq Ct^{-{N\alpha+2b\over 2}}.
\end{equation}
Using  \eqref{identite1new} together with \eqref{estimationG} we get
\begin{equation}
\label{estimationv}
\|\nabla v(t)\|_{L_x^2(\R^N)}\leq Ct^{-{N\alpha+2b\over 4}}.
\end{equation}
The  estimates \eqref{estimationG} and \eqref{estimationv} are established in \cite{Dinh} for $K(x)=|x|^{-b}$. We use these two estimates in the rest of the proof.

We first consider the case $p\leq \alpha+2.$ Let $p_1,\; \theta$ be such that
$${1\over p_1}={1\over 2}+{b\over N(\alpha+2)},\;\theta\left({1\over p_1}-{1\over \alpha+2}\right)={1\over p_1}-{1\over p}.$$

Using interpolation and \eqref{estimationG}, we have
\begin{eqnarray*}
\left\|K^{{1\over \alpha+2}}u(t)\right\|_{L^{p,1}(\R^N)}&\leq & \left\|K^{{1\over \alpha+2}}u(t)\right\|_{L^{\alpha+2,\alpha+2}(\R^N)}^\theta
\left\||\cdot|^{-{b\over \alpha+2}}u(t)\right\|_{L^{p_1,2}(\R^N)}^{1-\theta}\\
 &\leq & \left\|K^{{1\over \alpha+2}}u(t)\right\|_{L^{\alpha+2,\alpha+2}(\R^N)}^\theta
\|u(t)\|_{L^{2}(\R^N)}^{1-\theta}\\
 &\leq & C\left\|K^{{1\over \alpha+2}}u(t)\right\|_{L^{\alpha+2,\alpha+2}(\R^N)}^\theta \\
 &\leq & C t^{-{N\alpha+2b\over 2(\alpha+2)}\theta},
\end{eqnarray*}
where we have used:
$$1<p_1\leq 2,\;0\leq \theta \leq 1,\; {1\over p}={\theta\over \alpha+2}+{1-\theta\over p_1}.$$
This proves \eqref{decy-blpq2} for $p\leq \alpha+2.$

We now consider the case $p>\alpha+2.$ Let $${1\over p_1}={b\over N(\alpha+2)}+{1\over 2}<1$$ and
$$\theta={2N(p-\alpha-2)\over p(4-2b-\alpha(N-2))}.$$
By the conditions on $p$ we have $\theta\in ]0,1[.$ It is clear that
$$1<p_1<N\; \mbox{ and }\;{1\over p}={\theta\over p_1}-{\theta\over N}+{1-\theta\over \alpha+2}.$$
By interpolation and the Sobolev embedding, we  get
\begin{eqnarray}
\label{ggnindecay}\left\|K^{{1\over \alpha+2}}u(t)\right\|_{L^{p,1}(\R^N)} &\leq & \nonumber C\left\|K^{{1\over \alpha+2}}u(t)\right\|_{L^{\alpha+2,\alpha+2}(\R^N)}^{1-\theta}\left\||\cdot|^{-{b\over \alpha+2}} v\right\|_{L^{{Np_1\over N-p_1},2}}^\theta\\&\leq & C\left\|K^{{1\over \alpha+2}}u(t)\right\|_{L^{\alpha+2,\alpha+2}(\R^N)}^{1-\theta}\left\|\nabla\left(|\cdot|^{-{b\over \alpha+2}} v\right)\right\|_{L^{p_1,2}}^\theta.
\end{eqnarray}
We have
$$\nabla\left(|\cdot|^{-{b\over \alpha+2}}v\right)=|\cdot|^{-{b\over \alpha+2}}\nabla v+C|x|^{-{b\over \alpha+2}-1}v.$$
On one hand, using H\"older's inequality in Lorentz spaces, we have
\begin{eqnarray*}
\||\cdot|^{-{b\over \alpha+2}}\nabla v\|_{L^{p_1,2}}&\leq & C\| |\cdot|^{-{b\over \alpha+2}}\|_{L^{{N(\alpha+2)\over b},\infty}(\R^N)}\|\nabla v(t)\|_{L^{2}(\R^N)}\\ &\leq & C\|\nabla v(t)\|_{L^{2}(\R^N)},\end{eqnarray*}
On the other hand,  H\"older's inequality  and the Sobolev inequality  in Lorentz spaces, give
\begin{eqnarray*}
\||\cdot|^{-{b\over \alpha+2}-1} v\|_{L^{p_1,2}}&\leq & C\| |\cdot|^{-{b\over \alpha+2}-1}\|_{L^{{N(\alpha+2)\over b+\alpha+2},\infty}(\R^N)}
\| v(t)\|_{L^{{2N\over N-2},2}(\R^N)} \\ &\leq & C \|\nabla v(t)\|_{L^2(\R^N)},
\end{eqnarray*}
where we used  $${1\over p_1}={b+\alpha+2\over N(\alpha+2)}+{N-2\over 2N}.$$
Then we get $$\left\|\nabla\left(|\cdot|^{-{b\over \alpha+2}} v\right)\right\|_{L^{p_1,2}}\leq C \|\nabla v(t)\|_{L^2(\R^N)}.$$
Using \eqref{ggnindecay}, \eqref{estimationG} and \eqref{estimationv}, we get
\begin{eqnarray*}
\left\|K^{{1\over \alpha+2}}u(t)\right\|_{L^{p,1}(\R^N)}&\leq & C\left\|K^{{1\over \alpha+2}}u(t)\right\|_{L^{\alpha+2,\alpha+2}(\R^N)}^{1-\theta}\|\nabla v(t)\|_{L^2(\R^N)}^\theta \\ &\leq & C
t^{-(1-\theta){N\alpha+2b\over 2(\alpha+2)}}t^{-\theta{N\alpha+2b\over 4}}\\ &= & C
t^{-{N\alpha+2b\over 4(\alpha+2)}(2+\alpha\theta)},
\end{eqnarray*}
giving the desired estimates. This completes the proof of Theorem \ref{Decay Estimates}.
\end{proof}

 \section{Scattering results}
In this section we give the proofs of the scattering results.  We begin by the case of oscillating initial data.

\begin{proof}[Proof of Corollary \ref{Scattering for oscillating solutions}]
By Theorem \ref{global}, we have $u\in L^a(0,\infty;L^{\varrho,q})$. We apply the H\"older inequality we get that $|x|^{-{b\over \alpha+2}}u\in L^{ a}(0,\infty;L^{{\alpha+2},\infty}(\R^N))$. Hence the result follows by Theorem \ref{globalstritchartzu} or Theorem \ref{globalstritchartzuw}.
 \end{proof}

 We now give the proof of Corollary \ref{EffectDecayK}.
\begin{proof}[Proof of Corollary \ref{EffectDecayK}]  Let $\alpha_0(\min (2,b_2))<\alpha <\frac{4-2b_1}{N-2s}$. The condition $\max (b_1,\frac{4-N\alpha^2-\alpha(N-2)}{2(\alpha+1)})\leq b\leq \min (b_2,\frac{4-(N-2s)\alpha}{2})$ implies that $\alpha_0(b)<\alpha <\frac{4-2b}{N-2s}$. If $b>0$, then by hypothese on $K$ we have $|K(x)|\lesssim |x|^{-b}$ and $|\nabla K(x)|\lesssim  |x|^{-b-1}$ if $s=1$. Hence by  Corollary \ref{Scattering for oscillating solutions} we have scattering for small initial data. If $b=0$ which implies $b_1=0$   we have $K, \nabla K\in L^\infty$, so by \cite{CW2} and Corollary \ref{Scattering for oscillating solutions}, the solution of \eqref{INLS}-\eqref{InitialINLS} with small initial data $u_0$, scatters. The last statement follows by the fact that $\alpha_0(2)=0$. This completes the proof.
\end{proof}

We now give the proof of Corollary \ref{scatering}.
\begin{proof}[Proof of Corollary \ref{scatering}] Let $u_0\in \Sigma.$ Applying Theorem \ref{Decay Estimates} with $p=\alpha+2,$ we have for some $A>0$:
$$\left\|\|K^{{1\over \alpha+2}}u (\cdot)\|_{L^{\alpha+2}(\R^N)}\right\|_{L^{a}(\{|s|>A\})}\lesssim \||s|^{-N({1\over 2}-{1\over \varrho})}\|_{L^{a}(\{|s|>A\})}.$$
Here $a$ is given by \eqref{aP}.
It is clear that the above right hand side is finite if $\alpha > \alpha_0(b)$.  Then, combined with the regularity of $u$ in time, we have $K^{{1\over \alpha+2}}u\in L^{ a}(\mathbb{R};L^{\alpha+2}(\R^N))\subset L^{ a}(\mathbb{R};L^{\alpha+2,\infty}(\R^N))$. The result follows by Theorem \ref{globalstritchartzuw}.
\end{proof}

 \bigskip
\section{Remarks on scattering in $H^1$}
In this section we give some results related to the scattering in $H^1$. We begin by the following criterion.
\begin{proposition}[Scattering criterion for $\alpha > \frac{4-2b}{N}$]
\label{globalstritchartzuw2}
Let $N\geq 4$, $0\leq b<2$, $$ {4-2b\over N} <\alpha< {4-2b\over N-2}$$ and $K$ be a real valued function   satisfying the condition $(K_1)-(K_2)$ or $(K_1)$ and $(K_3)$. Let $u\in C([0,\infty),H^1(\R^N))$ be a global solution of \eqref{INLS}. If  $\lim_{t\to \infty}\||\tilde{K}|^{{1\over \alpha+2}}u(t)\|_{L^{{\alpha+2,q}}(\R^N)}=0,$ (respectively  $\lim_{t\to \infty}\|u(t)\|_{L^{{\varrho,q}}(\R^N)}=0,$) for some $2\leq q\leq 2\alpha+2$ then $|\tilde{K}|^{{1\over \alpha+2}}u\in L^{ a}(0,\infty;L^{\alpha+2,q}(\R^N))$ (respectively $u\in L^{ a}(0,\infty;L^{\varrho,q}(\R^N))$ and the conclusion of Theorem \ref{globalstritchartzuw} holds for positive time.
 Similar statements holds for negative time.
\end{proposition}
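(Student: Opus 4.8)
## Proof proposal for Proposition 7.1

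The plan is to reduce this proposition to the scattering criterion of Theorem~\ref{globalstritchartzuw}, so the whole task amounts to upgrading a decay hypothesis at $t\to\infty$ into the integrability statement $|\tilde K|^{1/(\alpha+2)}u\in L^a(0,\infty;L^{\alpha+2,q})$. First I would fix a large time $T$ and work on $(T,t)$ for $t>T$, writing the Duhamel formula with base point $T$,
\begin{equation*}
u(t)=e^{i(t-T)\Delta}u(T)-i\int_T^t e^{i(t-s)\Delta}\bigl(K|u|^\alpha u(s)\bigr)\,ds,
\end{equation*}
and then applying the inhomogeneous non-admissible Strichartz estimate of Proposition~\ref{nonadmissiblestrichartz} to the integral term, together with the homogeneous dispersive/Strichartz bound for the linear term. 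The point of taking $\alpha>\frac{4-2b}{N}$ (strict) is exactly that $a<\varsigma/2$ is \emph{false} and instead $a>\varsigma$, so the pair $(a,\varrho)$ sits on the ``good'' side for Proposition~\ref{nonadmissiblestrichartz}: one has $\sigma=a>r/2$ with $r=\varsigma$, which is what lets us close an estimate purely in the $L^a(\,\cdot\,;L^{\varrho,q})$-type norm without needing the full Strichartz space.

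Next I would estimate the nonlinear term. Using hypothesis $(K_1)$ (and $(K_2)$ or $(K_3)$) exactly as in Lemma~\ref{importantestimates} and Lemma~\ref{nonlinearnonadmissibleNgeq3}, one gets, via Hölder in Lorentz spaces,
\begin{equation*}
\bigl\|K|u|^\alpha u\bigr\|_{L^{\varrho',q/(\alpha+1)}}\lesssim \bigl\||\tilde K|^{1/(\alpha+2)}u\bigr\|_{L^{\alpha+2,\infty}}^{\alpha}\,\bigl\||\tilde K|^{1/(\alpha+2)}u\bigr\|_{L^{\alpha+2,q}},
\end{equation*}
(or the analogous inequality bounding $\|u\|_{L^{\varrho,q}}$ on the right), and then Hölder in time against the hypothesis distributes the $\alpha$ copies of the $L^{\alpha+2,\infty}$-norm into an $L^a$-in-time norm of the solution. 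Here the decay assumption $\lim_{t\to\infty}\||\tilde K|^{1/(\alpha+2)}u(t)\|_{L^{\alpha+2,q}}=0$ enters twice: it controls the $L^{\alpha+2,\infty}$ factors (since $L^{\alpha+2,q}\hookrightarrow L^{\alpha+2,\infty}$ and $u$ is bounded on compact time intervals by the $H^1$ local theory), and it makes the tail $\|\,|\tilde K|^{1/(\alpha+2)}u\,\|_{L^a((T,t);L^{\alpha+2,\infty})}^\alpha$ as small as we like by choosing $T$ large. The restriction $2\le q\le 2\alpha+2$ is what is needed for the Lorentz-Hölder bookkeeping $q/(\alpha+1)\ge 2$ (resp. $\le q$) used in these product estimates to be legitimate; I would record the exponent arithmetic $\frac{1}{\varrho'}=\frac{b}{N}+\frac{\alpha+1}{\varrho}$ and $\frac{\alpha}{a}+\frac{1}{?}=\cdots$ as in Section~3 rather than redoing it.

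Putting these together gives, for $T$ large enough that the smallness holds,
\begin{equation*}
\bigl\||\tilde K|^{1/(\alpha+2)}u\bigr\|_{L^{a}((T,t);L^{\alpha+2,q})}\le C\bigl\||\tilde K|^{1/(\alpha+2)}e^{i(t-T)\Delta}u(T)\bigr\|_{L^{a}((T,\infty);L^{\alpha+2,q})}+\tfrac12\bigl\||\tilde K|^{1/(\alpha+2)}u\bigr\|_{L^{a}((T,t);L^{\alpha+2,q})},
\end{equation*}
and the linear term is finite because $u(T)\in H^1$ combined with the Lorentz--Sobolev embedding and the dispersive decay $|t|^{-N(1/2-1/\varrho)}$ (integrable in $L^a$ precisely because $\alpha>\alpha_0(b)$, which is automatic here since $\alpha>\frac{4-2b}{N}>\alpha_0(b)$). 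Absorbing the half-term and letting $t\to\infty$ yields $|\tilde K|^{1/(\alpha+2)}u\in L^a(T,\infty;L^{\alpha+2,q})$, and since $u$ is continuous into $H^1$ the interval $(0,T)$ contributes a finite amount, giving the claim on $(0,\infty)$. The conclusion of Theorem~\ref{globalstritchartzuw} (full Strichartz regularity and scattering in $H^1$, and in $\Sigma$ if $u_0\in\Sigma$) then applies verbatim. The parenthetical statement with $\|u(t)\|_{L^{\varrho,q}}\to0$ is handled identically, replacing $|\tilde K|^{1/(\alpha+2)}u$ by $u$ throughout and using $(K_1)$ once at the end to pass from $u\in L^a(0,\infty;L^{\varrho,q})$ to $|x|^{-b/(\alpha+2)}u\in L^a(0,\infty;L^{\alpha+2,\infty})$ as in the proof of Corollary~\ref{Scattering for oscillating solutions}. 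The negative-time statement is obtained by the same argument run backwards.

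The main obstacle I expect is the bookkeeping in the Lorentz--Hölder product estimate for the nonlinearity: one must verify that with the given restriction $2\le q\le 2\alpha+2$ the second Lorentz indices add up correctly (so that the output lands in $L^{\varrho',q/(\alpha+1)}\hookrightarrow L^{\varrho',2}$, or in the non-admissible target of Proposition~\ref{nonadmissiblestrichartz}), and that the case split $(K_2)$ versus $(K_3)$ genuinely produces the \emph{same} bound with $\tilde K$ — this is where the definition $\tilde K=|x|^{-b}$ resp. $\tilde K=K$ and hypothesis $(K_3)$ (which is tailored precisely so that $|\nabla K|$ can be distributed as $|x|^{-1-2b/(\alpha+2)}|K|^{\alpha/(\alpha+2)}$) have to be used with care, exactly as in Lemma~\ref{importantestimates}.
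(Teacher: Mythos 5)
There is a genuine gap at the point where you claim that the decay hypothesis ``makes the tail $\||\tilde K|^{1/(\alpha+2)}u\|_{L^a((T,t);L^{\alpha+2,\infty})}^\alpha$ as small as we like by choosing $T$ large.'' The hypothesis is only the pointwise-in-time vanishing $\lim_{t\to\infty}\||\tilde K|^{1/(\alpha+2)}u(t)\|_{L^{\alpha+2,q}}=0$, with no rate; this does not make the $L^a$-in-time norm over an interval whose right endpoint goes to infinity small, nor even finite a priori (think of a decay like $|s|^{-1/(2a)}$). In fact, finiteness of exactly that $L^a$ norm is the conclusion you are trying to prove, so the closing of your bootstrap is circular. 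A second, related problem is the linear term: with only $u(T)\in H^1$ there is no dispersive decay $|t-T|^{-N(1/2-1/\varrho)}$ (that requires data in a dual Lorentz space); one can still get finiteness of $\|e^{i(\cdot-T)\Delta}u(T)\|_{L^a(T,\infty;L^{\varrho,2})}$ by interpolating the Strichartz bound with the $L^\infty_tH^1$ bound (using $a\geq\varsigma$), but this quantity is not small as $T\to\infty$, so it cannot serve as the small parameter either. Finally, your claim that the strict inequality $\alpha>\frac{4-2b}{N}$ is what puts $(a,\varrho)$ on the good side of Proposition \ref{nonadmissiblestrichartz} is off: $a>\varsigma/2$ already holds for all $\alpha>\alpha_0(b)$ (Lemma \ref{nonlinearnonadmissibleNgeq3}(i)); the strict inequality plays a different role, described below.

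The paper's proof avoids the circularity by never bootstrapping in the non-admissible norm. It estimates the admissible norm $\|u\|_{L^\varsigma((T,t);L^{\varrho,2})}$, with $(\varsigma,\varrho)$ as in \eqref{aP}, \eqref{varsigma}, and splits the nonlinear contribution as
$$\| K|u|^{\alpha+1}\|_{L^{\varsigma'}((T,t);L^{\varrho',2})}\lesssim \sup_{s>T}\||\tilde K|^{\frac{1}{\alpha+2}}u(s)\|_{L^{\alpha+2,q}}^{\delta/\varsigma'}\,\|u\|_{L^{\varsigma}((T,t);L^{\varrho,2})}^{\varsigma/\varsigma'},\qquad \delta=(\alpha+1)\varsigma'-\varsigma,$$
so that the small factor is a \emph{sup in time} of exactly the quantity the hypothesis controls; the condition $\alpha>\frac{4-2b}{N}$ is precisely what makes $\delta>0$. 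A continuity argument then gives a uniform bound on $\|u\|_{L^\varsigma((T,\infty);L^{\varrho,2})}$, hence $u\in L^\varsigma(0,\infty;L^{\varrho,2})$ and, by H\"older, $|\tilde K|^{1/(\alpha+2)}u\in L^\varsigma(0,\infty;L^{\alpha+2,q})$. Only at the very end is the $L^a$ integrability produced, by interpolating this $L^\varsigma_t$ bound with the $L^\infty_t$ bound that follows from continuity plus the vanishing limit; here the second consequence of $\alpha\geq\frac{4-2b}{N}$, namely $\varsigma\leq a$, is used. This two-step mechanism (sup-in-time smallness to close the admissible estimate, then $L^\varsigma_t\cap L^\infty_t\hookrightarrow L^a_t$) is the missing idea in your proposal; without it the pointwise decay cannot be upgraded to the $L^a$ statement, and the reduction to Theorem \ref{globalstritchartzuw} does not go through.
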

\begin{proof} We first give the proof for the case $\lim_{t\to \infty}\||\tilde{K}|^{{1\over \alpha+2}}u(t)\|_{L^{{\alpha+2,q}}(\R^N)}=0$.  We have
\begin{eqnarray*}
\|u\|_{L^\varsigma (T,t,L^{\varrho, 2}(\R^N))}
&\lesssim & \|u(T)\|_{L^2(\R^N)} +\| K |u|^{\alpha+1}\|_{L^{\varsigma '} (T,t,L^{\varrho ', 2}(\R^N))}\\
&\lesssim & \|u(T)\|_{L^2(\R^N)} + \||\tilde{K}|^{{1\over \alpha+1}}u\|^{\alpha+1}_{L^{(\alpha+1)\varsigma '} (T,t,L^{(\alpha+1)\varrho ', 2(\alpha+1)}(\R^N))}\\
&\lesssim & \|u(T)\|_{L^2(\R^N)} + \||\tilde{K}|^{{1\over \alpha+2}}u\|^{\alpha+1}_{L^{(\alpha+1)\varsigma '} (T,t,L^{\alpha+2, 2(\alpha+1)}(\R^N))}\\
&\lesssim & \|u(0)\|_{L^2(\R^N)} + \sup_{s>T}\||\tilde{K}|^{{1\over \alpha+2}}u(s)\|^{\delta/\varsigma '}_{L^{\alpha+2, q}(\R^N))}\||x|^{-{b\over \alpha+2}}u\|^{\varsigma/\varsigma '}_{L^\varsigma (T,t,L^{\alpha+2,2}(\R^N))}\\ &\lesssim & \|u(0)\|_{L^2(\R^N)} + \sup_{s>T}\||\tilde{K}|^{{1\over \alpha+2}}u(s)\|^{\delta/\varsigma '}_{L^{\alpha+2, q}(\R^N))}\|u\|^{\varsigma/\varsigma '}_{L^\varsigma (T,t,L^{\varrho, 2}(\R^N))}.
\end{eqnarray*}
where $\delta=(\alpha+1)\varsigma '-\varsigma$. Since $\alpha>\frac{4-2b}{N}$ then  $\delta >0$. Using a classical  argument of continuity we can deduce that for $T$ sufficiently large we have
\begin{eqnarray*}
\|u\|_{L^\varsigma (T,t,L^{\varrho, 2}(\R^N))}& \lesssim & \|u(0)\|_{L^2(\R^N)}.
\end{eqnarray*}
But we know that $u\in L_{{\mbox{loc}}}^\varsigma (T,t,L^{\varrho, 2}(\R^N))$ so we obtain that  $u\in L^\varsigma (0,\infty,L^{\varrho, 2}(\R^N)).$ Then by the H\"older inequality in Lorentz spaces we have
 $$\||\tilde{K}|^{{1\over \alpha+2}}u\|_{L^\varsigma (0,\infty,L^{\alpha+2,2}(\R^N))} \lesssim  \|u\|_{L^\varsigma (0,\infty,L^{\varrho, 2}(\R^N))},$$
  and we get that $|\tilde{K}|^{{1\over \alpha+2}}u\in L^\varsigma (0,\infty,L^{\alpha+2, 2}(\R^N)).$ That is $|\tilde{K}|^{{1\over \alpha+2}}u\in L^\varsigma (0,\infty,L^{\alpha+2, q}(\R^N)).$ By the embedding of $H^1(\R^N)$ in $L^{\varrho,2}(\R^N),$ we have that $|\tilde{K}|^{{1\over \alpha+2}}u\in C(0,\infty,L^{\alpha+2, q}(\R^N))$. Now, since $\lim_{t\to \infty}\||\tilde{K}|^{{1\over \alpha+2}}u(t)\|_{L^{\alpha+2,q}(\R^N)}=0$ we obtain that $|\tilde{K}|^{{1\over \alpha+2}}u\in L^\infty(0,\infty,L^{\alpha+2, q}(\R^N)).$ Since $\alpha >\frac{4-2b}{N}$, then $\varsigma<a$ and by interpolation in time we conclude that $|\tilde{K}|^{{1\over \alpha+2}}u\in L^a (0,\infty,L^{\alpha+2, q}(\R^N))\subset L^a (0,\infty,L^{\alpha+2, \infty}(\R^N)).$ Hence Theorem \ref{globalstritchartzuw} applies.

Second we treat the case  $\lim_{t\to \infty}\|u(t)\|_{L^{{\varrho,q}}(\R^N)}=0$. We have, using the previous calculations,
\begin{eqnarray*}
\|u\|_{L^\varsigma (T,t,L^{\varrho, 2}(\R^N))}
&\lesssim & \|u(0)\|_{L^2(\R^N)} + \sup_{s>T}\||\tilde{K}|^{{1\over \alpha+2}}u(s)\|^{\delta/\varsigma '}_{L^{\alpha+2, q}(\R^N))}\|u\|^{\varsigma/\varsigma '}_{L^\varsigma (T,t,L^{\varrho, 2}(\R^N))}\\ &\lesssim &\|u(0)\|_{L^2(\R^N)} + \sup_{s>T}\|u(s)\|^{\delta/\varsigma '}_{L^{\varrho, q}(\R^N))}\|u\|^{\varsigma/\varsigma '}_{L^\varsigma (T,t,L^{\varrho, 2}(\R^N))}.
\end{eqnarray*}
By similar argument as above, we deduce that $u\in L^\varsigma (0,\infty,L^{\varrho, q}(\R^N))\cap L^\infty (0,\infty,L^{\varrho, q}(\R^N))$. The result follows by interpolation. This completes the proof of the proposition.
\end{proof}
\begin{remark}

\end{remark}{\rm
\begin{itemize}
\item[1)] We learn recently that in \cite{DK}  a different scattering criterion is given for the focusing case.
\item[2)] If the previous limit holds at $\pm\infty$   then the scattering occurs in both directions.
\end{itemize} }

 We have the following applications of the previous Proposition for $K(x)=\mu |x|^{-b}$, $\mu >0$.

\begin{corollary}[ $(H^1,H^1)-$ Scattering  for the defocusing case]
\label{refineddecayandscatteringH1}
Assume that   $N\geq 4,\; 0\leq b<2$, $K(x)=\mu |x|^{-b}$, $\mu>0,$ and  $${4-2b\over N}<\alpha<{4-2b\over N-2}.$$  Let $u_0\in H^1(\R^N)$ and $u\in C(\R,H^1(\R^N))$ be the global solution of \eqref{INLS}  with initial data $u_0.$ Then $u\in L^{ a}(\R;L^{\varrho,2}(\R^N))$ and hence  $u$ scatters in $H^1(\R^N).$
\end{corollary}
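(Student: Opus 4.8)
The plan is to establish a Lin--Strauss type Morawetz estimate and then feed it into a Strichartz bootstrap, closing with Theorem \ref{globalstritchartzuw}. First I would record the reductions. Since $\mu>0$ the problem is defocusing, so $u$ is global and, by conservation of the mass \eqref{masse} and of the (nonnegative) energy \eqref{energy1}, $u\in L^\infty(\R;H^1(\R^N))$ with $\|\nabla u(t)\|_{L^2}^2\le 2E(u_0)$. Note $K=\mu|x|^{-b}$ satisfies $(K_1)$--$(K_2)$, so $\tilde K=|x|^{-b}$, and $\varrho$ in \eqref{aP} obeys $\frac{1}{\alpha+2}=\frac{b}{N(\alpha+2)}+\frac{1}{\varrho}$; hence H\"older in Lorentz spaces gives $\||x|^{-b/(\alpha+2)}u\|_{L^{\alpha+2,\infty}}\lesssim\|u\|_{L^{\varrho,2}}$. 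Therefore, once we show $u\in L^a(\R;L^{\varrho,2})$, Theorem \ref{globalstritchartzuw} applies and yields scattering in $H^1$ (negative time being symmetric), so the whole task is the global spacetime bound $u\in L^a(\R;L^{\varrho,2})$. Equivalently, since $\alpha>(4-2b)/N$, it suffices by Proposition \ref{globalstritchartzuw2} to prove $\|u(t)\|_{L^{\varrho,2}}\to 0$ as $t\to\pm\infty$, after which that proposition returns $u\in L^\varsigma(\R;L^{\varrho,2})\cap L^\infty(\R;L^{\varrho,2})$ and then $u\in L^a(\R;L^{\varrho,2})$ by interpolation, using $\varsigma<a$.

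The Morawetz step uses $M(t)=\operatorname{Im}\int_{\R^N}\bar u\,\tfrac{x}{|x|}\cdot\nabla u\,dx$. Differentiating along the flow and integrating by parts, the $-\Delta$ contribution is nonnegative for $N\ge 4$ (the Hessian of $|x|$ is positive semidefinite and $\Delta^2|x|=-(N-1)(N-3)|x|^{-3}\le 0$), while the nonlinear contribution, using $x\cdot\nabla K=-bK$ for $K=\mu|x|^{-b}$ with $\mu>0$, equals $\frac{\alpha(N-1)+2b}{\alpha+2}\int_{\R^N}\frac{K(x)|u|^{\alpha+2}}{|x|}\,dx$, which is strictly positive. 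Hence $\frac{d}{dt}M(t)\ge c\int_{\R^N}\frac{K(x)|u(t,x)|^{\alpha+2}}{|x|}\,dx$ with $c>0$; since $|M(t)|\le\|u(t)\|_{L^2}\|\nabla u(t)\|_{L^2}\le\|u_0\|_{L^2}\sqrt{2E(u_0)}$, integrating in time gives
\[
\int_{-\infty}^{\infty}\int_{\R^N}\frac{K(x)|u(t,x)|^{\alpha+2}}{|x|}\,dx\,dt\ \le\ C\big(\|u_0\|_{H^1}\big).
\]
Setting up this identity rigorously for $H^1$ data is routine by the usual approximation argument; in dimensions $N\ge4$ the distributional term coming from $\Delta^2|x|$ is harmless.

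Next I would convert the finiteness of this Morawetz integral into global Strichartz control. For small $\eta>0$ split $[0,\infty)$ into \emph{finitely many} intervals $I_1,\dots,I_m$ with $\int_{I_j}\int_{\R^N}\frac{K|u|^{\alpha+2}}{|x|}\,dx\,dt<\eta$. On each $I_j$ run a Strichartz bootstrap on the Duhamel formula $u(t)=e^{i(t-t_{j-1})\Delta}u(t_{j-1})-i\int_{t_{j-1}}^t e^{i(t-s)\Delta}(K|u|^\alpha u)\,ds$ with the admissible pair $(\varsigma,\varrho)$ of \eqref{varsigma}, estimating the nonlinearity by the Lorentz H\"older/Sobolev inequalities of Lemma \ref{importantestimates} so that the small quantity $\int_{I_j}\int\frac{K|u|^{\alpha+2}}{|x|}$ is extracted --- here a near-origin/away-from-origin splitting of the $x$-integral at a scale $R$ is convenient, to pass between the Morawetz weight $|x|^{-(1+b)/(\alpha+2)}$ and the weight $|x|^{-b/(\alpha+2)}$ appearing in Lemma \ref{importantestimates}. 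The inter-critical condition $\alpha>(4-2b)/N$ supplies a strictly positive power of $\eta$, closing the bootstrap uniformly in $j$ and giving $\|u\|_{L^\varsigma(I_j;W_2^{1,\varrho})}\lesssim\|u(t_{j-1})\|_{H^1}\le\|u\|_{L^\infty(\R;H^1)}$. Summing over the finitely many $I_j$ yields $u\in L^\varsigma(0,\infty;W_2^{1,\varrho})\hookrightarrow L^\varsigma(0,\infty;L^{\varrho,2})$; interpolating with $L^\infty(0,\infty;L^{\varrho,2})$ and using $\varsigma<a$ gives $u\in L^a(0,\infty;L^{\varrho,2})$ and in particular $\|u(t)\|_{L^{\varrho,2}}\to 0$. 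The same argument runs on $(-\infty,0]$, and Theorem \ref{globalstritchartzuw} then delivers the scattering (one may equally invoke Proposition \ref{globalstritchartzuw2} once the decay $\|u(t)\|_{L^{\varrho,2}}\to0$ is in hand).

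The hard part will be the bootstrap in the third step: making the near/far splitting at scale $R$ actually reconcile the Morawetz weight $|x|^{-(1+b)/(\alpha+2)}$ with the weight $|x|^{-b/(\alpha+2)}$ of Lemma \ref{importantestimates}, and verifying that $\alpha>(4-2b)/N$ indeed leaves a positive power of the small parameter to absorb. The Morawetz inequality itself, and its sign, are straightforward for $N\ge4$. It should be emphasised that this route needs no weighted ($\Sigma$) regularity on $u_0$ --- only $u_0\in H^1$ --- which is precisely the point of the alternative proof with respect to \cite{Dinh2}.
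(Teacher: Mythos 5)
Your reductions coincide with the paper's: once $u\in L^a(\R;L^{\varrho,2})$ (or once $\|u(t)\|_{L^{\varrho,2}}\to 0$, via Proposition \ref{globalstritchartzuw2}), H\"older in Lorentz spaces and Theorem \ref{globalstritchartzuw} finish the argument. But the paper obtains the decay in one step: it interpolates $\|u(t)\|_{L^{\varrho,2}}\le\|u(t)\|_{L^{\rho_1}}^{\theta}\|u(t)\|_{L^{\rho_2}}^{1-\theta}$ with $2<\rho_1<\varrho<\rho_2<\tfrac{2N}{N-2}$, invokes the known decay $\|u(t)\|_{L^{\rho_i}}\to0$ from \cite{Dinh2}, and applies Proposition \ref{globalstritchartzuw2}. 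You instead propose to reprove that input from scratch: a Lin--Strauss Morawetz estimate (whose derivation and sign you state correctly for $N\ge4$, $\mu>0$) followed by an interval-decomposition Strichartz bootstrap.

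The gap is exactly at the step you yourself flag as ``the hard part'', and it is not a technicality that a near/far splitting at a scale $R$ routinely fixes. The Morawetz bound controls $\int\int |x|^{-1-b}|u|^{\alpha+2}$, whereas the nonlinear estimates of Lemma \ref{importantestimates} require smallness of $\||x|^{-b/(\alpha+2)}u\|_{L^{a}(I_j;L^{\alpha+2,\infty})}$. Near the origin the Morawetz weight is stronger and the comparison costs only $R^{1/(\alpha+2)}$, but on $\{|x|>R\}$ the Morawetz integral gives no control at all of $\int_{I_j}\int_{|x|>R}K|u|^{\alpha+2}$ (the conversion factor $|x|$ is unbounded there), and the obvious substitute --- a factor $R^{-\delta}$ times a spacetime norm of $u$ over $I_j$ --- reintroduces precisely the kind of non-admissible, time-global quantity you are bootstrapping; the uniform $H^1$ bound is useless here because a constant is not in $L^{a}_t$ of a long interval. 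The claim that $\alpha>(4-2b)/N$ ``supplies a strictly positive power of $\eta$'' is therefore unsubstantiated as stated. The issue is sharpest at $b=0$, which your statement of the corollary allows: there your scheme would deduce energy scattering for the classical defocusing NLS from the classical (non-interaction) Morawetz estimate by a short bootstrap, which is the Ginibre--Velo theorem \cite{GinibreVelo} and is known to require a substantially more elaborate argument (or interaction Morawetz, or a Visciglia-type potential-energy decay argument, which is essentially what \cite{Dinh2} carries out and what the paper simply cites). So the strategy is viable in principle, but the central mechanism is missing from your proof; as written it does not close.
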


\begin{proof} By the H\"older inequality and interpolation, we have
$$\|u(t)\|_{L^{\rho,2}(\R^N)} \leq \|u(t)\|_{L^{\rho_1}(\R^N)}^\theta \|u\|_{L^{\rho_2}(\R^N)}^{1-\theta},$$
where
$${1\over \rho}={\theta\over \rho_1}+{1-\theta\over \rho_2},\; \theta\in (0,1),\; 2<\rho_1<\rho<\rho_2<{2N\over N-2}.$$
By \cite[Theorem 3, p. 415]{Dinh2}, we have $\lim_{t\to \pm\infty}\|u(t)\|_{L^{\rho_i}(\R^N)}=0,\; i=1,\; 2.$ Hence $\lim_{t\to \pm\infty}\|u(t)\|_{L^{\rho,2}(\R^N)}=0.$ The result follows then by Proposition \ref{globalstritchartzuw2}.
\end{proof}
\begin{remark}
{\rm The scattering result of the previous corollary is known \cite{Dinh2}. Our proof is different and gives the rapidly decay of the solution.}
\end{remark}

\end{document}